\documentclass[reqno]{amsart}
\usepackage{graphicx} 
\numberwithin{equation}{section}
\usepackage{amsmath}
\usepackage{amssymb}
\usepackage{amsthm}
\usepackage{color}
\usepackage{enumerate}
\theoremstyle{definition}
\newtheorem{thm}{Theorem}[section]
\newtheorem{prop}[thm]{Proposition}
\newtheorem{rem}[thm]{Remark}
\newtheorem{lemma}[thm]{Lemma}
\newtheorem{definition}[thm]{Definition}

\newtheorem{claim}{Claim}
\usepackage{mathrsfs}

\newcommand{\map}{f_a}
\newcommand{\cmap}{\hat{f}_a}
\newcommand{\Leb}{\text{Leb}}
\newcommand{\llya}{\underline{\lambda}}
\newcommand{\ulya}{\overline{\lambda}}
\newcommand{\lya}{\lambda}
\newcommand{\kneading}{\underline{e}}

\title[Existence of the Lyapunov exponent]{Existence of the Lyapunov exponent for $S$-unimodal maps}
\author{Yuya Arima}
\date{\today}
\address{Graduate School of Mathematics, Nagoya University,
Furocho, Chikusaku, Nagoya, 464-8602, JAPAN} 
\email{yuya.arima.c0@math.nagoya-u.ac.jp}
\subjclass[2020]{37A05, 37D25, 37E05}
\thanks{{\it Keywords}: Lyapunov exponents, unimodal maps, Birkhoff averages}

\begin{document}

\begin{abstract}
    In this paper, we show that for any $S$-unimodal map $T$ on $[0,1]$  with a non-flat critical point the Lyapunov exponent exists for Lebesgue almost every point and is equal to a constant $\lambda_T\in\mathbb{R}$. 
Moreover, $\lambda_T=0$ if and only if $T$
admits neither an absolutely continuous $T$-invariant probability measure with positive entropy nor a strictly stable periodic orbit.
Consequently, if an $S$-unimodal map with a non-flat critical point is infinitely renormalizable or non-statistical then for Lebesgue almost every $x\in [0,1]$ the Lyapunov exponent along the orbit of $x$ exists and is equal to $0$. 
A key ingredient is the following result of independent interest. If an $S$-unimodal map with a non-flat critical point has no periodic attractor then  for Lebesgue almost every $x\in [0,1]$ the lower Lyapunov exponent along the orbit of $x$ is non-negative. This shows that, in the absence of periodic attractors, exponential contraction cannot occur along the orbit of Lebesgue almost every point. 
\end{abstract}

\maketitle

\section{Introduction}
Lyapunov exponents are fundamental quantities that characterize chaotic behavior in dynamical systems. However, Lyapunov exponents are defined as limits of time averages along orbits of a dynamical system. Therefore, it is natural to ask whether the set of initial points for which the Lyapunov exponent exists is physically observable. Motivated by this question, several authors have investigated this problem. See, for example, \cite{KirikiLiNakanosoma, kiriki2026non, YushiSomaYamamoto, OttYorke}. However, all of these works deal only with higher-dimensional dynamical systems. 
In this paper, we study the existence of Lyapunov exponents with respect to Lebesgue measure for one-dimensional dynamical systems, in particular, for $S$-unimodal maps with a non-flat critical point. 
Existing results establishing the existence of Lyapunov exponents for Lebesgue almost every point rely on strong assumptions, such as the existence of an absolutely continuous invariant probability measure or a periodic attractor. We remove all such assumptions.
More precisely, we show that for any $S$-unimodal map $T$ on $[0,1]$ with a non-flat critical point there exists a constant $\lambda_T\in\mathbb{R}$ such that for Lebesgue almost every $x\in [0,1]$ the Lyapunov exponent along the orbit of $x$ exists and is equal to $\lambda_T$. The proof relies on a new lower Lyapunov estimate of independent interest (Proposition \ref{prop nonegative}).

We say that a differentiable map $T:[0,1]\rightarrow [0,1]$ is a unimodal map if there exists a unique point $c\in (0,1)$ such that $T'(c)=0$. Moreover, a unimodal map $T:[0,1]\rightarrow [0,1]$ is said to be of class $C^3$ if $T$ is $C^1$ on $[0,1]$ and $C^3$ on $[0,1]\setminus\{c\}$.
Let $T$ be a unimodal map such that $T$ is of class $C^3$. 
For $x\in (0,1)\setminus\{c\}$ we define 
\[
ST(x):=\frac{T'''(x)}{T'(x)}-\frac{3}{2}\left(\frac{T''(x)}{T'(x)}\right)^2.
\]
We say that $T$ has negative Schwarzian derivative if for all $(0,1)\setminus\{c\}$ we have $ST(x)\leq 0$.

A unimodal map $T:[0,1]\rightarrow [0,1]$ is called an $S$-unimodal map if it is of class $C^3$, has negative 
Schwarzian derivative, satisfies $T(0)=T(1)=0$ and has a strictly positive right derivative at $x=0$. 
An $S$-unimodal map $T$ is said to have a non-flat critical point if 
there exists 
a $C^3$ diffeomorphism $\phi:\mathbb{R}\rightarrow \mathbb{R}$ with $\phi(0)=0$ and $1<l<\infty$ such that for $x$ close to $c$ we have 
\[
T(x)=T(c)- |\phi(x-c)|^{l}.
\]
The value of $l$ is known as the critical order of $c$.  
We denote by $\Leb$ the Lebesgue measure on $[0,1]$. For each $x\in [0,1]$ we define the lower Lyapunov exponent $\llya(x)$ by 
\[
\llya(x):=\liminf_{n\to \infty}\frac{1}{n} \log |(T^n)'(x)|
\]
and the upper Lyapunov exponent $\ulya(x)$ by 
\[
\ulya(x):=\limsup_{n\to \infty}\frac{1}{n} \log |(T^n)'(x)|.
\]
If these two limits coincide, we define the Lyapunov exponent $\lya(x)$ by 
\[
\lya(x):=\llya(x)=\ulya(x).
\]
We say that $T$ has a strictly stable periodic orbit if there exists a periodic point $p\in[0,1]$ with period $q\in\mathbb{N}$ such that $|(T^q)'(p)|<1$.
The map $T$ has an absolutely continuous $T$-invariant probability measure if there exists a $T$-invariant Borel probability measure $\mu$ such that $\mu$ is absolutely continuous with respect to $\Leb$.

\begin{thm}\label{thm main}
    For all $S$-unimodal map $T$ with a non-flat critical point there exists a constant $\lya_T\in \mathbb{R}$ such that for $\Leb$-almost every $x\in [0,1]$ we have 
    \begin{align*}
    \lya(x)=\lya_T.    
    \end{align*}
    Moreover, $T$ has an absolutely continuous $T$-invariant probability measure with a positive entropy if and only if $\lya_T>0$. Furthermore, $T$ has a strictly stable periodic orbit if and only if $\lya_T<0$.
\end{thm}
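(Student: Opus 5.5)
The proof will be a case split along the classical description of metric attractors of $S$-unimodal maps with non-flat critical point (Guckenheimer, Blokh--Lyubich, Martens--de Melo--van Strien). There are three cases: (i) $T$ has a periodic attractor; (ii) $T$ has an absolutely continuous invariant probability $\mu$; (iii) there is no periodic attractor and either no acip exists or the acip has zero entropy. I will use the fact that for such maps $\mu$-a.e. and Lebesgue-a.e. statements can be transferred, since $\mu$ is ergodic and equivalent to Lebesgue on its support, and its basin has full Lebesgue measure.

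\textbf{Case (i).} By Singer's theorem and the absence of wandering intervals, there is at most one periodic attractor, and its basin has full Lebesgue measure. If the attracting orbit $p$ of period $q$ has $|(T^q)'(p)|<1$, then Lebesgue-a.e.\ $x$ is attracted to it. Such an $x$ eventually avoids a neighbourhood of $c$, unless the orbit of $c$ lies on the cycle; in that case the orbit is superattracting and the derivative still tends to $0$ exponentially. So $\lya(x)=\frac1q\log|(T^q)'(p)|<0$, with the value $-\infty$ excluded only when $c$ is not periodic. Here I must check how $\lya_T\in\mathbb{R}$ is handled in the superattracting case. If the periodic attractor is neutral ($|(T^q)'(p)|=1$), then a.e.\ orbit converges to the orbit of $p$, the derivative along it is asymptotically $|(T^q)'(p)|^{n/q}$ up to a subexponential factor, and the exponent is $0$. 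Negative Schwarzian rules out any other behaviour.

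\textbf{Case (ii).} If an acip $\mu$ exists, it is ergodic and $\log|T'|\in L^1(\mu)$; this integrability uses non-flatness together with a Przytycki-type bound on the recurrence to $c$. Birkhoff's theorem then gives $\lya(x)=\int\log|T'|\,d\mu$ for $\mu$-a.e.\ $x$, and hence for Lebesgue-a.e.\ $x$ in the basin, which has full Lebesgue measure. Ledrappier's theorem (the Pesin formula for interval maps) gives $h_\mu=\int\log|T'|d\mu$ for an acip. Since $\int\log|T'|d\mu\ge0$ by Przytycki, we get $\lya_T>0$ iff the entropy is positive. In the zero-entropy case $\lya_T=0$. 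Combining (i) and (ii) yields both equivalences of the theorem, provided case (iii) also gives $\lya_T=0$.

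\textbf{Case (iii).} This is the main obstacle: there is no invariant measure to integrate against, since the map is infinitely renormalizable, or Lebesgue-a.e.\ orbit accumulates on the critical orbit, or the map is non-statistical. The lower bound is Proposition \ref{prop nonegative}: in the absence of periodic attractors, $\llya(x)\ge0$ Lebesgue-a.e. For the upper bound, $\ulya(x)\le 0$, I plan to argue by weak-$*$ limits of the empirical measures $\frac1n\sum_{k<n}\delta_{T^kx}$. Any limit measure $\nu$ satisfies $\ulya(x)\le\int\log|T'|d\nu$ along the corresponding subsequence, because $\log|T'|$ is bounded above and upper semicontinuous. Then $\int\log|T'|d\nu\le\max(0,\ldots)$ should follow from the Margulis--Ruelle inequality $h_\nu\ge$ (positive exponent part). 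The missing ingredient is that no invariant measure with positive exponent can be a physical limit here. A measure $\nu$ with $\int\log|T'|d\nu>0$ is hyperbolic, and by Ledrappier / Keller's characterization of acips it is absolutely continuous when it arises as a statistical limit of Lebesgue-typical points. That would contradict case (iii), unless it is an acip of zero entropy, which is again impossible by Pesin's formula. I expect to need the Keller result that if Lebesgue-a.e.\ $x$ has $\ulya(x)>0$ on a positive-measure set, then an acip exists.

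So my plan is this: show that the set $\{\ulya>0\}$ has positive measure, then use Keller/Bruin-type criteria (positive upper exponent on a set of positive measure implies the existence of an acip with positive exponent) to reach a contradiction. This gives $\ulya=\llya=0$ a.e.
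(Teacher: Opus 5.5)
Your overall architecture is the paper's. You treat the periodic-attractor case separately, handle the acip case via ergodicity of $\Leb$, integrability of $\log|T'|$ (Przytycki) and Birkhoff, and get the lower bound in the remaining case from Proposition~\ref{prop nonegative}. You deviate only on the upper bound $\overline{\lambda}\le 0$ in case (iii), and the argument you sketch there does not work.

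The semicontinuity step is fine: $\log|T'|$ is bounded above and upper semicontinuous, so $\overline{\lambda}(x)\le\int\log|T'|\,d\nu$ for some weak-$*$ limit $\nu$ of the empirical measures. But you have no way to get $\int\log|T'|\,d\nu\le 0$.

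Ruelle's inequality runs the other way: $h_\nu\le\max\{0,\int\log|T'|\,d\nu\}$. It bounds entropy by the exponent and gives no upper bound on the exponent.

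Your claim that a limit of empirical measures of Lebesgue-typical orbits with positive exponent must be an acip is also false. The paper's own Theorem~\ref{thm quadratic map} is a counterexample. For the Hofbauer--Keller parameters, $\delta_{p_1}\in\overline{\omega}_a(\delta_x)$ for a.e.\ $x$, with $\log|f_a'(p_1)|>0$. There is no acip with positive entropy, and yet $\lambda(x)=0$ a.e. The weak-$*$ limit does not control the time spent extremely close to $c$, where $\log|T'|$ is very negative, and that is exactly where the discrepancy lives. So no empirical-measure argument of this type can give $\overline{\lambda}\le 0$.

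What closes case (iii) is the result you mention only at the end, used directly: Keller's theorem (Theorem~\ref{thm keller}). It says $\overline{\lambda}(x)=\overline{\lambda}_T$ for a.e.\ $x$, and $\overline{\lambda}_T>0$ if and only if an acip with positive entropy exists. So in the absence of such an acip you get $\overline{\lambda}\le 0$ a.e.\ immediately. Combined with Proposition~\ref{prop nonegative}, this yields $\lambda(x)=0$ a.e. This is precisely the paper's proof, and it is your closing plan read as a contradiction argument (suppose $\{\overline{\lambda}>0\}$ has positive measure), not as ``show it has positive measure.''

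Keller's theorem also contains both equivalences in the statement, so your Ledrappier/Pesin-formula detour in case (ii) is unnecessary, though not wrong.

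Your worry about the superattracting case is legitimate. If $c$ is periodic, the distance to the cycle decays super-exponentially and $\lambda(x)=-\infty$ for a.e.\ $x$. The assertion $\lambda_T\in\mathbb{R}$ therefore has to allow $-\infty$ there, and the paper's proof does not address this.
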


When an $S$-unimodal map $T$ with a non-flat critical point admits an absolutely continuous $T$-invariant probability measure or a periodic attractor, the existence of the Lyapunov exponent $\lambda(x)$ for $\Leb$-almost every $x\in [0,1]$ is known (see Section \ref{seq Technical lemmas for distortion estimates of $S$-unimodal maps} for the definition of periodic attractors). However, to the best of our knowledge, the existence of the Lyapunov exponent $\lambda(x)$ for $\Leb$-almost every $x\in [0,1]$ has not been established in all other cases. 
Therefore, the novelty of Theorem \ref{thm main} is that, even when an $S$-unimodal map $T$ with a non-flat critical point 
admits neither an absolutely continuous $T$-invariant probability measure nor a periodic attractor, for $\Leb$-almost every $x\in [0,1]$ the Lyapunov exponent $\lambda(x)$ nevertheless exists. In the absence of an absolutely continuous $T$-invariant probability measure or a periodic attractor there is in general no invariant probability measure describing the statistical behavior of typical orbits with respect to $\Leb$. Consequently, standard ergodic arguments do not apply.

In the following, we explain that, among $S$-unimodal maps with a non-flat critical point admitting neither an absolutely continuous $T$-invariant probability measure nor a periodic attractor, there are cases in which one naturally expects 
\begin{align}\label{eq intro}
\text{
$\lambda(x)=0$ for $\Leb$-almost every $x\in[0,1]$,
}
\end{align}
as well as cases in which such a conclusion is difficult to predict.
For the sake of simplicity, we consider quadratic maps. 
For $a\in [0,4]$ we define the quadratic map $\map:[0,1]\rightarrow [0,1]$ by 
\[
\map(x)=ax(1-x).
\]
It is easy to verify that for all $a\in(1,4]$ the map $\map$ is an $S$-unimodal map and the order of the critical point $c=1/2$ of $\map$ is $2$. For all $a\in [0,1]$ and $x\in [0,1]$ we have $\lim_{n\to\infty}\map^n(x)=0$.

Let $M([0,1])$ be the set of all Borel probability measures. We endow $M([0,1])$ with the weak* topology. 
For $x\in [0,1]$ we denote by $\delta_x$ the Dirac measure at $x$. 
For $a\in[0,4]$ a $\map$-invariant Borel probability measure $\mu$ on $[0,1]$ is said to be a physical measure if there exists a Borel set $A\subset [0,1]$ such that $\Leb(A)>0$ and for all $x\in A$ the sequence of measures $\left\{\frac{1}{n}\sum_{k=0}^{n-1}\delta_{\map^k(x)}\right\}_{n\in\mathbb{N}}$ converges to $\mu$.

We first consider the case where one naturally expects the conclusion \eqref{eq intro} to hold. For $a\in[0,4]$ we say that $\map$ is infinitely renormalizable if there exist a strictly increasing sequence $\{p_k\}_{k\in\mathbb{N}}\subset \mathbb{N}$ and a sequence $\{J_{k}\}_{k\in\mathbb{N}}$ of closed subinterval of $(0,1)$ with $J_{k+1}\subset J_{k}$ for all $k\in\mathbb{N}$ such that for each $k\in\mathbb{N}$ we have the following:   
\begin{itemize}
    \item[(R1)] The interiors of $J_k, \map(J_k),\cdots,\map^{p_k-1}(J_k)$ are disjoint.
    \item[(R2)] We have $\map^{p_k}(J_k)\subset J_k$ and $\map^{p_k}(\partial J_k)\subset \partial J_k$
    \item[(R3)] There exists $i\in \{0,\cdots,p_k-1\}$ such that $c\in\text{Int}(\map^i(J_k))$ 
    \item[(R4)] If $J_k\subset J'\subset (0,1)$ is a closed interval satisfying the conditions (R1) to (R3) then $J'=J_k$. 
\end{itemize}
Here, $\partial A$ denotes the Euclidean boundary of $A\subset [0,1]$ and $\text{Int}(A)$ denotes the Euclidean interior of $A\subset [0,1]$. Let $\map$ be infinitely renormalizable. Then, By \cite[Chapter V, Theorem 1.6]{de2012one}, there exists a unique $\map$-invariant Borel probability measure $\mu_a$. Moreover, the measure $\mu_a$ is a physical measure and has zero entropy. Furthermore, by Keller \cite[Theorem 7]{Kellerliting}, we have $\int \log |\map'| d\mu_a=0$ (see also \cite[Theorem B]{FariaGuarino}). Therefore, in this case we can strongly expect the conclusion \eqref{eq intro} to hold. However, because of the presence of the critical point, $\log|\map'|$ is neither continuous nor bounded. Therefore, the conclusion does not follow immediately and requires a proof. Theorem \ref{thm main} provides such a proof. 

Next, we consider the case where one cannot naturally expect the conclusion \eqref{eq intro} to hold.
For a Borel probability measure $\nu$ on $[0,1]$ let $\overline{\omega}_a(\nu)$ be the set of all weak accumulation points of the sequence 
\[
\left\{\frac{1}{n}\sum_{k=0}^{n-1}\nu\circ \map^{-k}\right\}_{n\in\mathbb{N}}.
\]
Let $d$ be a metric on $M(\Omega_N)$ generating the weak* topology.
\begin{thm}\label{thm quadratic map}
    There exists 
    an uncountable set $\mathcal{S}\subset [0,4]$ such that for any $a\in \mathcal{S}$ we have the following:
    \begin{itemize}
        \item[(1)] For $\Leb$-almost every $x\in [0,1]$ we have         $\overline{\omega}_a(\delta_x)
=\overline{\omega}_a(\delta_{\map(c)})=\overline{\omega}_a(\Leb)   
       $.
       \item[(2)] $\overline{\omega}_a(\delta_{\map(c)})$ is not a singleton.
\item[(3)] There exists $\{n_k\}_{k\in\mathbb{N}}\subset \mathbb{N}$ with $\lim_{k\to\infty}n_k=\infty$ and a $\map$-invariant Borel probability measure $\mu$ on $[0,1]$ such that 
$
\frac{1}{n_k}\sum_{j=1}^{n_k}\delta_{\map^j(c)}\to \mu
$
as $k\to\infty$ and the limit 
$\lim_{k\to\infty}\frac{1}{n_k}\log |(\map^{n_k})'(\map(c))|
$
exists. Moreover,  
\[
0=\llya(\map(c))<\lim_{k\to\infty}\frac{1}{n_k}\log |(\map^{n_k})'(\map(c))|\leq \ulya(\map(c)).
\]
    \end{itemize}
\end{thm}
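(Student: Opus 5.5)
We would obtain $\mathcal{S}$ from the known non-statistical quadratic maps. Results of Hofbauer--Keller type, in the form refined by Talebi and others, give an uncountable set of parameters $a$ for which $\map$ is non-statistical. For these parameters $\map$ has no periodic attractor and no absolutely continuous invariant probability measure. The critical orbit alternates between long stretches shadowing a repelling fixed point $p$ and stretches whose empirical measures approach a measure with positive Lyapunov exponent. Concretely, we would choose $\mathcal{S}$ by a Cantor-type kneading construction. We prescribe the itinerary of $c$ as blocks: the $k$-th block copies the itinerary of a periodic orbit or an expanding Cantor set carrying a measure $\mu$ with $\int\log|\map'|d\mu>0$, and the next block makes the orbit stay near a neutral-like region for times $m_k\gg$ previous time. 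Each block admits at least two choices, and by monotonicity of kneading in the quadratic family (Milnor--Thurston) every admissible sequence is realized. This gives uncountably many $a$.

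For (1) we would use the known dichotomy for maps without acip and without periodic attractors. By Blokh--Lyubich, Lebesgue-a.e. $x$ has $\omega(x)=\omega(c)$. The key step is the stronger statement that a.e. orbit \emph{shadows} the critical orbit statistically. Following Keller/Hofbauer--Keller, we would use Theorem \ref{thm main} together with Lemma-type distortion estimates. Almost every $x$ falls close to $c$ at some time $n$, and then $\map^{n+j}(x)$ follows $\map^{j}(c)$ for a time proportional to the depth. These shadowing times cover a density-one set of times, which forces $\overline{\omega}_a(\delta_x)=\overline{\omega}_a(\delta_{\map(c)})$. Integrating over $x$ and using the dominated convergence of empirical measures gives $\overline{\omega}_a(\Leb)$ as well.

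For (2) and (3) we would compute directly from the construction. At times $n_k$ ending a long positive-exponent block, the empirical measures of $\map(c)$ converge to $\mu$. Since the orbit stays a definite distance from $c$ in those blocks, $\frac{1}{n_k}\log|(\map^{n_k})'(\map(c))|$ converges to $\int\log|\map'|d\mu>0$; bounded distortion and continuity of $\log|\map'|$ away from $c$ give this. At the other times the empirical measures converge to a different measure, which gives (2).

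The claim $\llya(\map(c))=0$ is the main obstacle. The lower bound $\llya\ge 0$ would follow by applying Proposition \ref{prop nonegative}-type arguments along the critical orbit; there, collet--Eckmann failure is compensated by the recovery after close returns. The upper bound needs times where the derivative grows subexponentially. For this we would design the close returns so that after returning at depth $\delta_k$ to $c$, the derivative loss $\sim\delta_k^{l-1}$ exactly cancels the accumulated expansion $e^{n\lambda}$. Choosing $\delta_k$ accordingly makes $\frac1n\log|(\map^n)'(\map(c))|\to0$ along those return times. Making this cancellation compatible with realizability of the kneading sequence, and with the shadowing used in (1), is where I expect the real work to lie.
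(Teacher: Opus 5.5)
There is a genuine gap, and it is exactly the step you flag: you have no proof of $\llya(\map(c))=0$, and the route you sketch is neither sound nor needed.

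For the lower bound, a Proposition~\ref{prop nonegative}-type argument cannot be run along the critical orbit. That proposition is an almost-everywhere statement resting on the slowly recurrent condition, and it says nothing about a prescribed point such as $\map(c)$, which lies in a null set. The right tool is Lemma~\ref{lemma Nowicki and Sands} applied at $x=\map(c)$. Since $\map(c)=\max\map$, we have $\map^i(\map(c))\le\map(c)$ for all $i$. Hence $|(\map^n)'(\map(c))|\ge K\rho^n$ for all $n$ and every $\rho<1$, i.e.\ $\llya(\map(c))\ge0$.

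The upper bound needs no engineered cancellation at close returns. By Collet--Eckmann and Nowicki, $\llya(\map(c))>0$ would produce an absolutely continuous invariant probability measure with positive entropy, which these parameters do not have (Proposition~\ref{prop Hofbauer Keller}(1)). So the ``neutral-like'' blocks and tuned return depths can be dropped. As you say yourself, you do not know how to make them compatible with realizability and with (1), and they are not needed.

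The second gap is (1). The claim that a.e.\ orbit shadows the critical orbit on a density-one set of times is unjustified, and Theorem~\ref{thm main} says nothing about empirical measures. The paper does not prove (1) by hand. It takes the kneading map in a Hofbauer--Keller skeleton $\mathscr{S}(\mathcal F)$, whose frame obeys $U_{k+1}\ge k2^{k+V_k}$ and $V_k\ge k^22^{U_k}$. Then Proposition~\ref{prop Hofbauer Keller}(2) and Hofbauer--Keller's Theorem~5 compare typical empirical measures at times $S(V_n)$ with those of $\map(c)$ at times $S(U_n)$. Your block construction (expanding Cantor sets, ``neutral-like'' stretches) is not shown to lie in such a skeleton, so that result is not available to you.

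If you follow the paper, you fill the free positions (D5) with generic points of measures dense in the segment between $\delta_{p_1}$ and the period-$3$ orbit measure, using $\omega(1)=01111\cdots$ infinitely often. Then (1) and (2) come from Hofbauer--Keller. Item (3) holds along the times $S(U_{k_j+1})-1$, where the critical orbit shadows the repelling fixed point $p_1$, with $\mu=\delta_{p_1}$ and limit $\log|\map'(p_1)|>0$.

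Your idea for (3) matches this in spirit, with one omission. The prefix before the long block contains deep returns to $c$, so distortion away from $c$ does not control it. You need the linear bound $\bigl|\log|(\map^n)'(\map(c))|\bigr|\le C+C'n$, which again follows from Lemma~\ref{lemma Nowicki and Sands} together with $|\map'|\le a$. The growth of the frame then makes the prefix negligible.
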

The existence of uncountably many parameters satisfying (1) and (2) follows from Hofbauer and Keller \cite{HofbauerKeller}.
The proof of this theorem is given in Section \ref{sec proof of theorem quadratic}. 
By (1) and (2), for any $a\in \mathcal{S}$ the map $\map$ is non-statistical, that is, for $\Leb$-almost every $x\in [0,1]$ the sequence of measures $\left\{\frac{1}{n}\sum_{k=0}^{n-1}\delta_{\map^k(x)}\right\}_{n\in\mathbb{N}}$ does not converge. In particular, 
the map $\map$ admits neither an absolutely continuous $T$-invariant probability measure nor a periodic attractor. (3) of Theorem \ref{thm quadratic map} implies that the Lyapunov exponent along the orbit of $\map(c)$ does not converge.
For $a\in \mathcal{S}$, the statistical behavior of typical orbits gives no indication that the Lyapunov exponent with respect to $\map$ converges for $\Leb$-almost every point. 
Nevertheless, our main theorem shows that for $a\in\mathcal{S}$ the Lyapunov exponent exists and is equal to $0$ for $\Leb$-almost every point. 
Hence, by (1) and (3) of Theorem \ref{thm quadratic map}, for $\Leb$-almost every $x\in [0,1]$ there exists a sequence $\{m_k\}_{k\in\mathbb{N}}\subset \mathbb{N}$ with $\lim_{k\to\infty}m_k=\infty$ such that we have 
\begin{align*}
    &\lim_{k\to\infty}d\left(\frac{1}{m_k}\sum_{j=0}^{m_k-1}\delta_{\map^j(x)},\frac{1}{n_k}\sum_{j=1}^{n_k}\delta_{\map^j(c)}\right)=0 \text{ and }\\&
    \lim_{k\to\infty}\frac{1}{m_k}\log |(\map^{m_k})'(x)|=0<
\lim_{k\to\infty}\frac{1}{n_k}\log |(\map^{n_k})'(\map(c))|,   
\end{align*}
where $\{n_k\}_{k\in\mathbb{N}}$ denotes the sequence appearing in (3) of Theorem \ref{thm quadratic map}. 
This result implies that the existence of the Lyapunov exponent $\lambda(x)$ for $\Leb$-almost every $x\in [0,1]$ cannot, in general, be inferred from the Lyapunov exponent along the orbit of $\map(c)$ together with the statistical properties with respect to $\Leb$.  

The proof of Theorem \ref{thm main} is based on the following result of independent interest. If an $S$-unimodal map $T$ with a non-flat critical point has no periodic attractor then 
\begin{align}\label{eq intro proof st}
\text{
for Lebesgue almost every $x\in [0,1]$ we have $\llya(x)\geq 0$ 
}
\end{align}
(Proposition \ref{prop nonegative}). 
This result establishes a general principle for $S$-unimodal maps: In the absence of periodic attractors, exponential contraction cannot occur along the orbit of $\Leb$-almost every $x\in [0,1]$.

To show Proposition \ref{prop nonegative}, we prove that the orbit of Lebesgue almost every $x\in [0,1]$ satisfies the slowly recurrent condition (Proposition \ref{prop slowly recurrent}). See Definition \ref{def slowly recurrent} for the definition of the slowly recurrent condition.
This type of result was established by Levin et al. \cite{Lyapunovholomorphic2016} for unicritical complex polynomials whose Julia sets have positive area.
Our proof strategy for Proposition \ref{prop slowly recurrent} is inspired by \cite{Lyapunovholomorphic2016}. More precisely, by using Przytycki \cite[Lemma 1]{characteristic1993}, we show that for any $\alpha>0$, $n\in\mathbb{N}$ and connected component $W$ of $T^{-n}(B(c,e^{-\alpha n}))$ the number of critical points of the map $T^n|_{W}$ is uniformly bounded. 
We then establish a distortion lemma for $T^n|_{W}$ by combining several distortion estimates proved in Section \ref{seq Technical lemmas for distortion estimates of $S$-unimodal maps}: see Claims \ref{claim distorsion} and \ref{claim exponentially small} in the proof of Proposition \ref{prop slowly recurrent}. This enables us to prove Proposition \ref{prop slowly recurrent}. 

Once Proposition \ref{prop nonegative} is established Keller \cite{Keller1990} yields Theorem \ref{thm main}.
To prove Theorem \ref{thm quadratic map}, we use the construction of Hofbauer and Keller \cite{HofbauerKeller} together with the recurrence property of the critical point established by Nowicki and Sands \cite{NowickiSands1998}. 

The outline of this paper is as follows. 
In Section \ref{seq Technical lemmas for distortion estimates of $S$-unimodal maps}, we establish several distortion lemmas. 
In Section \ref{sec slowly recurrent} we prove that the orbit of Lebesgue almost every $x\in [0,1]$ satisfies the slowly recurrent condition (Proposition \ref{prop slowly recurrent}).
Section \ref{sec known results} introduces known results on $S$-unimodal maps used in Sections \ref{sec proof of theorem main} and \ref{sec proof of theorem quadratic}. 
In Section \ref{sec proof of theorem main}, we prove Theorem \ref{thm main}. In Section \ref{sec proof of theorem quadratic}, we prove Theorem \ref{thm quadratic map}.

\section{Technical lemmas for distortion estimates of $S$-unimodal maps}\label{seq Technical lemmas for distortion estimates of $S$-unimodal maps}
In this section, we introduce several technical lemmas for distortion estimates of $S$-unimodal maps. These lemmas will be used in Section \ref{sec slowly recurrent}.

Let $T$ be an $S$-unimodal map with a non-flat critical point. 
In this paper, we denote by $c$ the critical point of $T$.
For $x\in [0,1]$ we define the $\omega$-limit set $\omega(x)$ of $x$ by 
\[
\omega(x):=\left\{y\in [0,1]: 
\begin{array}{l}
\text{ there exists a sequence }\{n_k\}_{k\in\mathbb{N}} \text{ such that } \\
\lim _{k\to\infty}n_k=\infty
\text{ and }\lim_{k\to\infty} T^{n_k}(x)=y
 \end{array}
 \right\}.
\]
For a periodic point $p\in [0,1]$ with a prime period $q\in \mathbb{N}$ (i.e. $T^q(p)=p$ and $T^k(p)\neq p$ for all $1\leq k\leq q-1$), we define the basin of attraction $B(O(p))$ of the periodic orbit $O(p):=\{p,T(p),\cdots, T^{q-1}(p)\}$ by 
\[
B(O(p)):=\{x\in [0,1]: \omega(x)=O(p)\}.
\]
The map $T$ is said to have a periodic attractor if there exists a periodic point $p\in [0,1]$ such that $B(O(p))$ has non-empty interior. 
Note that any stable periodic orbit is a periodic attractor. Moreover, by \cite[Lemma A.8]{NowickiSands1998}, if there exists a periodic point $p\in [0,1]$ with a prime period $q\in \mathbb{N}$ such that $|(T ^q)'(p)|=1$ then there exists a periodic attractor.

 For an interval $U\subset \mathbb{R}$ we set $|U|:=\Leb(U)$. 
For bounded intervals $U\subset V\subset \mathbb{R}$ and $\tau>0$, $V$ is said to contain a $\tau$-scaled neighborhood of $U$ if $V\setminus U$ consists of two components both of length at least $\tau|U|$.
We first introduce the Koebe principle (see, for example, \cite[Chapter IV, Theorem 1.2]{de2012one} or \cite[Lemma A.6]{NowickiSands1998}):
\begin{lemma}\label{lemma Koebe}
    Let $T$ be an $S$-unimodal map and let $\tau>0$. 
    Then there exists a constant $K(\tau)\geq 1$ such that
    for each $s\geq 1$, intervals $I \subset J\subset [0,1]$ such that $T^s|_J$ is a diffeomorphism onto its image and that $T^s(J)$ contains a $\tau$-scaled neighborhood of $T^s(I)$ and $x,y\in I$ we have
    \[
    \frac{1}{K(\tau)}\leq \frac{|(T^s)'(x)|}{|(T^s)'(y)|}\leq K(\tau).
    \]
\end{lemma}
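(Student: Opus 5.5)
The approach is the classical one for the Koebe principle: negative Schwarzian derivative gives control of the cross-ratio, and cross-ratio control bounds the distortion in terms of the space around $T^s(I)$ inside $T^s(J)$.

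\emph{Step 1: composition.} Since $T$ is $C^3$ off $c$ with $ST\le 0$, and $T^s|_J$ is a diffeomorphism, the orbit $J, T(J),\dots,T^{s-1}(J)$ avoids $c$ in its interiors. The composition formula
\[
S(f\circ g)=(Sf\circ g)(g')^2+Sg
\]
then gives $S(T^s)\le 0$ on the interior of $J$. So it suffices to prove the statement for a single $C^3$ diffeomorphism $g:J\to g(J)$ with $Sg\le 0$, with a constant that does not depend on $g$.

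\emph{Step 2: cross-ratio expansion.} For $M\subset J$ with $J\setminus M$ having components $L,R$, set
\[
C(J,M)=\frac{|J||M|}{|L||R|}.
\]
The standard fact is that $Sg\le 0$ implies $C(g(J),g(M))\ge C(J,M)$. I would prove it in two moves. First, Möbius maps preserve $C$ and maps with $Sg\le 0$ have no interior positive local minimum of $|g'|$ (minimum principle). Second, normalize by Möbius maps so that $J=[0,1]$, $g$ fixes $0$ and $1$, and $g$ fixes one endpoint of $M$. Then $\sqrt{|g'|}$ has concave reciprocal-type behaviour, which gives the inequality.

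\emph{Step 3: the Koebe estimate.} Apply Step 2 to intervals $M=[x,x+\epsilon]$ with $x\in I$ and $\epsilon\to 0$. This yields the one-sided derivative bound
\[
|g'(x)|\ge |g'(\text{pt})|\cdot(\text{ratio of lengths}),
\]
and, in log form,
\[
\left|\frac{g''(x)}{g'(x)}\right|\le \frac{2}{\operatorname{dist}\bigl(g(x),\partial g(J)\bigr)}\cdot |g'(x)|.
\]
This is the infinitesimal Koebe inequality, obtained by comparing $C$ on nested intervals and letting the inner one shrink. Integrate it along $g(I)$: in the variable $u=g(x)$,
\[
\Bigl|\frac{d}{du}\log|(g^{-1})'(u)|\Bigr|\le \frac{2}{\operatorname{dist}(u,\partial g(J))}.
\]
Every $u\in g(I)$ lies at distance at least $\tau|g(I)|$ from $\partial g(J)$. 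Integrating over an interval of length $|g(I)|$ therefore gives
\[
\bigl|\log|g'(x)|-\log|g'(y)|\bigr|\le \frac{2}{\tau}.
\]
Taking $K(\tau)=e^{2/\tau}$, or the sharper $\bigl((1+\tau)/\tau\bigr)^2$, finishes the proof.

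\emph{Main obstacle.} The step I expect to be delicate is Step 2, the cross-ratio monotonicity under $Sg\le0$, which is the real content. I would cite it via the minimum principle argument of de Melo–van Strien, Chapter IV.

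Step 1 also needs care at the endpoints of $J$, where $T^k(J)$ may touch $c$ or $0,1$. I would handle this by applying the argument to slightly smaller closed subintervals of $J$ and passing to the limit, since the constant depends only on $\tau$ and is stable under small shrinking.
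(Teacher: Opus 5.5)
The paper does not prove this lemma; it quotes the Koebe principle from de Melo--van Strien (Ch.~IV, Thm.~1.2) and Nowicki--Sands (Lemma~A.6). Your argument is a correct, self-contained proof of that standard result. It runs through $S(T^s)\le 0$ on $J$ via the composition rule, then the infinitesimal Koebe inequality $\bigl|\frac{d}{du}\log|(g^{-1})'(u)|\bigr|\le 2/\mathrm{dist}(u,\partial g(J))$, then integration over $g(I)$, giving $K(\tau)=e^{2/\tau}$. It uses only $C^3$ regularity and $ST\le 0$, which matches the hypotheses.

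One step in Step~3 is stated too quickly. Letting only $M=[x,x+\epsilon]$ shrink in the cross-ratio inequality gives just the zeroth-order bound $|g'(x)|\ge \frac{|g(L)|\,|g(R)|\,|J|}{|g(J)|\,|L|\,|R|}$ with $J=[a,b]$, $L=[a,x]$, $R=[x,b]$. That is not yet a bound on $g''$, and "log form" does not turn it into one. To get the differential inequality you must also apply this bound on $[x-\delta,b]$ (resp.\ $[a,x+\delta]$) and let $\delta\to 0$. For increasing $g$ this yields $g''(x)/g'(x)\ge -2g'(x)/(g(b)-g(x))$, and the other side is symmetric.

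A shorter route bypasses the cross-ratio step altogether. Put $\psi(u)=|(g^{-1})'(u)|^{-1/2}=|g'(g^{-1}(u))|^{1/2}$. Then $\psi''=-\tfrac12\psi\,S(g^{-1})=\tfrac12\psi\,(Sg\circ g^{-1})\,((g^{-1})')^2\le 0$, so $\psi$ is positive and concave on $g(J)=(A,B)$. Concavity gives $-1/(B-u)\le(\log\psi)'(u)\le 1/(u-A)$, which is your inequality since $\frac{d}{du}\log|(g^{-1})'|=-2\psi'/\psi$.

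Integrating these one-sided bounds, rather than using the crude estimate $\mathrm{dist}\ge\tau|g(I)|$, is what actually produces the sharper constant $\bigl((1+\tau)/\tau\bigr)^2$ you quote. Your written integration only justifies $e^{2/\tau}$, which is of course enough for the lemma.
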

Next, we introduce the macroscopic Koebe principle:
\begin{lemma}{\cite[Proposition 1]{BruinRiveraShenStrien}}\label{lemma macroscopic Koebe}
Let $T$ be an $S$-unimodal map with a non-flat critical point such that $T$ has no periodic attractors. Let $\tau>0$. 
Then there exist $K'(\tau)>0$ and $\eta>0$ such that the following condition holds: Let $s\geq 1$ be an integer. Assume that intervals $I \subset J\subset [0,1]$ satisfy the following conditions: 
\begin{itemize}
    \item $|T^s(J)|<\eta$.
    \item $T^s|_J$ is a diffeomorphism onto its image.
    \item $T^{s}(J)$ contains a $\tau$-scaled neighborhood of $T^s(I)$.
\end{itemize}
Then $J$ contains a $K'(\tau)$-scaled neighborhood of $I$. 
\end{lemma}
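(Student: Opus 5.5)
The plan is to transfer the space around $T^s(I)$ back to $J$ using cross-ratios, since for maps with negative Schwarzian derivative these behave monotonically under diffeomorphic iterates. This should give a constant $K'(\tau)$ depending only on $\tau$. For intervals $I\subset J$ with $J\setminus I=L\cup R$ (left and right components), set
\[
B(J,I):=\frac{|J|\,|I|}{|L|\,|R|}.
\]

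First I will recall the standard fact that $ST\le 0$ on $(0,1)\setminus\{c\}$ implies $S(T^s)\le 0$ wherever $T^s$ is a diffeomorphism. This follows from the chain rule for the Schwarzian. A diffeomorphism $g$ with $Sg\le 0$ expands this cross-ratio, that is, $B(g(J),g(I))\ge B(J,I)$. This is the same mechanism underlying Lemma \ref{lemma Koebe}; see \cite[Chapter IV]{de2012one}. Applying it to $g=T^s|_J$ gives
\[
B(J,I)\le B(T^s(J),T^s(I)).
\]

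Second, I bound the right-hand side using the $\tau$-scaled neighborhood hypothesis. Write $L',R'$ for the components of $T^s(J)\setminus T^s(I)$, so $|L'|,|R'|\ge\tau|T^s(I)|$. For fixed $|I'|:=|T^s(I)|$, the quantity $(|I'|+|L'|+|R'|)|I'|/(|L'||R'|)$ is decreasing in each of $|L'|$ and $|R'|$. Hence
\[
B(T^s(J),T^s(I))\le \frac{(1+2\tau)}{\tau^2}=:C(\tau).
\]

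Third, I convert the bound on $B(J,I)$ into a lower bound on each side. Since $|J|\ge |L|$, we get $B(J,I)\ge |I|/|R|$, so $|R|\ge |I|/C(\tau)$; symmetrically, $|L|\ge |I|/C(\tau)$. Therefore $J$ contains a $K'(\tau)$-scaled neighborhood of $I$ with $K'(\tau):=\tau^2/(1+2\tau)$.

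In this argument the smallness condition $|T^s(J)|<\eta$ and the absence of periodic attractors are not actually needed. They become necessary only in the general setting of \cite{BruinRiveraShenStrien}, where $T$ need not have negative Schwarzian derivative. There the cross-ratio distortion is controlled by $\exp(C\sum_{k<s}|T^k(J)|)$. To bound this sum one uses that, without periodic attractors or wandering intervals, pullbacks of intervals of length $<\eta$ along diffeomorphic branches have uniformly small and summable lengths. If I had to follow that route, this summability would be the main obstacle. I would first try to establish it via the contraction principle for pullbacks of small intervals (no wandering intervals plus no periodic attractors), combined with disjointness arguments for the iterates $T^k(J)$.

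In the present $S$-unimodal setting, the only delicate step is checking that the correct cross-ratio is monotone in the right direction under negative Schwarzian. That is the first step, and I would verify it from the one-step case $g=T$ by the chain rule for cross-ratios.
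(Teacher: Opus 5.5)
Your proof is correct, and it follows a genuinely different route from the paper. The paper gives no argument of its own and simply imports the statement from \cite[Proposition 1]{BruinRiveraShenStrien}. That result is proved for smooth interval maps with non-flat critical points that need not have negative Schwarzian derivative. In that generality, the smallness condition $|T^s(J)|<\eta$ and the absence of periodic attractors are needed to control the distortion of cross-ratios along the branch.

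You instead use structure already available for $S$-unimodal maps. Since $S(T^s)\le 0$ on any interval where $T^s$ is a diffeomorphism, the cross-ratio $B$ can only increase under $T^s|_J$; this is the same mechanism behind Lemma \ref{lemma Koebe}. The non-strict hypothesis $ST\le 0$ gives only the weak inequality, which is all you need. If your reference treats only $Sg<0$, apply it to $h_\varepsilon\circ g$ with $h_\varepsilon(y)=(e^{\varepsilon y}-1)/\varepsilon$, for which $S h_\varepsilon=-\varepsilon^2/2$, and let $\varepsilon\to 0$. The degenerate case $L=\emptyset$ or $R=\emptyset$ is excluded because $T^s|_J$ is a homeomorphism and $\tau>0$. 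Your bounds in the second and third steps are right.

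Your route buys a stronger statement in this setting: the explicit constant $K'(\tau)=\tau^2/(1+2\tau)$, with $\eta$ arbitrary and no use of the no-periodic-attractor hypothesis. The citation buys only generality beyond negative Schwarzian derivative, which this paper never needs, since every map it considers is $S$-unimodal.
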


The following two lemmas give estimates near the critical point.   
Although the proof uses only basic calculus, the lemmas play an important role in Section \ref{sec slowly recurrent}.

\begin{lemma}\label{lemma well inside near critical point}
    Let $T$ be an $S$-unimodal map with a non-flat critical point and let $\tau>0$. 
    Then there exists $K_c'(\tau)>0$ such that for each $0\leq y<x<T(c)$ satisfying $x-y\geq \tau x$ the interval $T^{-1}((y,T(c)])$ contains a $K_c'(\tau)$-scaled neighborhood of the interval $T^{-1}((x,T(c)])$. 
\end{lemma}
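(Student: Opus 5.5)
Set $T_L:=T|_{[0,c]}$ and $T_R:=T|_{[c,1]}$. These are diffeomorphisms onto $[0,T(c)]$, increasing and decreasing respectively. For $0\le z<T(c)$ write
\[
T^{-1}((z,T(c)])=(a_z,b_z),\qquad a_z=T_L^{-1}(z),\quad b_z=T_R^{-1}(z),
\]
an interval around $c$. The claim is then that, with $U=(a_x,b_x)$ and $V=(a_y,b_y)$, both $a_x-a_y$ and $b_y-b_x$ are at least $K_c'(\tau)(b_x-a_x)$. The hypothesis $x-y\ge\tau x$ is equivalent to $y\le(1-\tau)x$. In particular $\tau<1$, and
\[
T(c)-y\ \ge\ (T(c)-x)+\tau x .
\]
I would fix a small $\delta>0$ such that on $T^{-1}([T(c)-\delta,T(c)])$ the non-flat normal form $T(t)=T(c)-|\phi(t-c)|^{l}$ holds. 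Then I would split into the case where $x$ is close to $T(c)$ and the case where it is not.

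\textbf{Case $x\ge T(c)-\delta$.} Here $x\ge T(c)/2$, so $\tau x\ge \tau T(c)/2$. Since $\phi$ is a $C^3$ diffeomorphism with $\phi(0)=0$, we have $|\phi(s)|\asymp |s|$ near $0$, and therefore
\[
b_x-a_x\asymp (T(c)-x)^{1/l}\le \delta^{1/l}.
\]
If $y$ is also in the normal-form range, then $c-a_y\asymp (T(c)-y)^{1/l}$, which is at least $\asymp(\tau T(c)/2)^{1/l}$. If $y$ is outside that range, $a_y$ lies outside a fixed neighbourhood of $c$. In both sub-cases $a_x-a_y$ is bounded below by a constant depending only on $\tau$, while $b_x-a_x$ is at most $C\delta^{1/l}$. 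Shrinking $\delta$ if needed, this gives a uniform ratio. The right side $b_y-b_x$ is handled identically.

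\textbf{Case $x<T(c)-\delta$.} Now $[a_x,b_x]\supset T^{-1}([T(c)-\delta,T(c)])$, so $|U|$ is bounded below by a constant. On $[0,T_L^{-1}(T(c)-\delta)]$, $T_L'$ is continuous and positive, because $T'(0)>0$ and $c$ is the only critical point. Hence it lies between constants $m$ and $M$, and $a_x-a_y\ge(x-y)/M\ge \tau x/M$. I would treat the right side the same way, controlling $T_R'$ near $1$ via the non-flatness and negative Schwarzian.

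\textbf{Main obstacle: $x\to 0$.} The estimate above is uniform only when $x$ is bounded away from $0$. As $x\to0$ we have $|U|\to 1$, while $a_x-a_y\approx (x-y)/T'(0)\le x/T'(0)$, so the ratio $|V\setminus U|/|U|$ tends to $0$. Read literally, with $K_c'(\tau)$ independent of $x$, the statement therefore seems to fail unless $x$ is bounded below. This regime is the step I expect to be the real difficulty. My first move would be to check how the lemma is applied in Section~\ref{sec slowly recurrent}, most likely with $x$ exponentially close to $T(c)$ (the images of $B(c,e^{-\alpha n})$), and if so to prove it under an additional restriction $x\ge x_0$. In that setting the two cases above, with $K_c'$ depending on $\tau$ and $x_0$, give the conclusion. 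I would present the small-$x$ counterexample explicitly to justify that restriction.
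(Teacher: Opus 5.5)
You are right that the statement fails as written: with $x-y\ge\tau x$ taken literally, both components of $T^{-1}((y,T(c)])\setminus T^{-1}((x,T(c)])$ have length $O(x)$, while the inner interval has length close to $1$, so the claim breaks as $x\to0$. But your repair is the wrong one. Keeping $x-y\ge\tau x$ and adding $x\ge x_0$ makes the lemma a one-liner: $a_x-a_y\ge (x-y)/\sup|T'|\ge\tau x_0/\sup|T'|$, likewise on the right, and $|U|\le1$. So your case analysis is unnecessary, and, more importantly, this version can never be applied where the lemma is used.

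What the paper's proof actually establishes is a scale-invariant statement. In the coordinates $\tilde x=T(c)-x$, $\tilde y=T(c)-y$, $\tilde\chi=h\circ T\circ k=|\phi|^l$, the hypothesis treated is $\tilde y-\tilde x\ge\tau\tilde x$. Equivalently, $x-y\ge\tau\,(T(c)-x)$, i.e.\ $|(y,T(c)]|\ge(1+\tau)|(x,T(c)]|$. This is exactly what Claim~\ref{claim distorsion} supplies. There $V_{k_s^W-1}(W)=(x,z)$ contains $T(c)$, and Lemma~\ref{lemma macroscopic Koebe} gives $x-y\ge K'(\tau_s)(z-x)\ge K'(\tau_s)(T(c)-x)$. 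At the same time $x-y$ and $T(c)-x$ are both smaller than $\eta$, so a margin $x-y\ge\tau x\approx\tau T(c)$ is never available. The line ``$x-y\ge K'(\tau_s)x$'' there is a slip for this relative bound.

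Under the correct hypothesis, the hard regime is precisely the one your Case~1 disposes of: $x\to T(c)$ with $(T(c)-y)/(T(c)-x)$ possibly as small as $1+\tau$. Your argument there relies on $T(c)-y\ge\tau T(c)/2$ being macroscopic, which no longer holds. The only tool you bring, $|\phi(s)|\asymp|s|$, is too weak. With $\tilde z:=T(c)-z$, it gives $A_1\tilde z^{1/l}\le c-a_z\le A_2\tilde z^{1/l}$ for two constants $A_1<A_2$. That cannot separate $a_y$ from $a_x$ by a definite proportion once $(1+\tau)^{1/l}\le A_2/A_1$.

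The missing ingredient is derivative-level control of the normal form. The paper obtains it from the expansion $\tilde\chi_\pm^{-1}(z)=\pm a^{-1}z^{1/l}+O(z^{2/l})$ with $a=\phi'(0)$. It then checks that $\tilde y^{1/l}-\tilde x^{1/l}\ge((1+\tau)^{1/l}-1)\tilde x^{1/l}$ dominates the $O(\tilde y^{2/l})$ errors, separately for $\tilde y/\tilde x\ge4^l$ and $\tilde y/\tilde x\le4^l$.

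A shorter route also works. Write $b_z-c=\phi^{-1}(\tilde z^{1/l})$ and $c-a_z=-\phi^{-1}(-\tilde z^{1/l})$. Apply the mean value theorem to $\phi^{-1}$, using $m\le(\phi^{-1})'\le M$ near $0$. This gives both components length at least $m((1+\tau)^{1/l}-1)\tilde x^{1/l}$, against $|U|\le2M\tilde x^{1/l}$.

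If $\tilde y$ leaves the normal-form range $[0,\delta]$, there are two cases. If $(1+\tau)\tilde x\le\delta$, you may shrink the outer interval to $T^{-1}((T(c)-\delta,T(c)])$ and use the normal-form bound. Otherwise $x-y\ge\tau\delta/(1+\tau)$, and your Case~2 bound finishes.
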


\begin{proof}
    Since $T$ has a non-flat critical point, there exists a $C^3$ diffeomorphism $\phi:\mathbb{R}\rightarrow \mathbb{R}$ with $\phi(0)=0$, $1<l<\infty$ and $0<\delta_1<\min\{c,1-c\}$ such that for all $x\in [c-\delta_1,c+\delta_1]$ we have 
    \[
    T(x)=T(c)-|\phi(x-c)|^l.
    \]
    Without loss of generality, we may assume that $\phi$ is monotone increasing. Thus, since $\phi$ is a diffeomorphism, we have 
    \begin{align*}
        a:=\phi'(0)>0.
    \end{align*}
For all $x\in \mathbb{R}$ we define $h(x):=-x+T(c)$ and $k(x):=x+c$. Then for all $x\in [-\delta_1, \delta_1]$ we obtain
\begin{align*}
    \tilde \chi(x):= h\circ T\circ k(x)
    =h(T(x+c))=h(T(c)-|\phi(x)|^l)=|\phi(x)|^l.
\end{align*}
Notice that the statement of the lemma follows once we show that there exist $0<\delta_c(\tau)\leq\min\{\tilde \chi(-\delta_1),\tilde \chi(\delta_1)\}$ and $K_c'(\tau)>0$ such that for each $0<  \tilde x<\tilde y$ satisfying $\tilde y<\delta_c(\tau)$ and $\tilde y-\tilde x\geq \tau \tilde x$
\begin{align}\label{eq critical well inside normal form}
\text{$\tilde \chi^{-1}([0,\tilde y))$ contains a $K_c'(\tau)$-scaled neighborhood of $\tilde \chi^{-1}([0,\tilde x))$.}    
\end{align}
We will show \eqref{eq critical well inside normal form}. 
We define the maps 
\[
\tilde \chi _+:=\tilde \chi|_{[0,\delta_1]}
\text{ and }
\tilde \chi_-:=\tilde \chi|_{[-\delta_1,0]}.
\]
Note that $\tilde \chi_+$ and $\tilde \chi_-$ are injective.
We first show that there exists $0<\delta'<\min\{\tilde \chi(-\delta_1),\tilde \chi(\delta_1)\}$ and $D>0$ such that for all $0\leq z\leq \delta'$ we have 
\begin{align}\label{eq critical important +}
    a^{-1} z^{1/l}-Dz^{2/l}\leq \tilde \chi_+^{-1}(z)\leq a^{-1} z^{1/l}+Dz^{2/l}.
\end{align}
and 
\begin{align}\label{eq critical important -}
    -a^{-1} z^{1/l}-Dz^{2/l}\leq \tilde \chi_-^{-1}(z)\leq -a^{-1} z^{1/l}+Dz^{2/l}.
\end{align}
We only show \eqref{eq critical important +}. \eqref{eq critical important -} follows from a similar argument.
Notice that we have $\tilde \chi_+(x)=h\circ T\circ k(x)=\phi(x)^l$ for all $x\in [0,\delta_1]$. 
Since $\phi$ is in $C^3$ and $\phi(0)=0$, by shrinking $\delta_1$ if necessary, there exists a constant 
$C\geq 0$ such that  
   \begin{align}\label{eq expansion of phi}
        |\phi(x)-ax|\leq C x^{2} \text{ for all } x\in [0,\delta_1].
   \end{align} 
    We define $\tilde \chi_+^*:[0,\delta_1]\rightarrow [0,\infty]$  by 
    \[
    \tilde \chi_{+}^*(x):=(ax)^l.
    \]
    Then, by the mean value theorem, for all $x\in [0,\delta_1]$ we have 
\begin{align}\label{eq critical point approximate}
    |\tilde \chi_+(x)-\tilde \chi_+^*(x)|
    \leq l(|ax|+|\phi(x)-ax|)^{l-1}|\phi(x)-ax|
    \leq C_1x^{l+1},
\end{align}
where $C_1:=Cl(|a|+C\delta_1)^{l-1}$. For $x\in [0,\delta_1]$ we define 
\[
\psi_-(x):=\tilde \chi_+^*(x)-C_1x^{l+1} \text{ and }
\psi_+(x):=\tilde \chi_+^*(x)+C_1x^{l+1}.
\]
By shrinking $\delta_1$ if necessary, for all $x\in (0,\delta_1]$ we have 
\begin{align}\label{eq monotonicity psi}
    \psi_-'(x)>0 \text{ and } \psi_+'(x)>0.
\end{align}
In particular, the maps $\psi_-$ and $\psi_+$ are injective.
Let $0\leq z\leq \min\{\tilde \chi(-\delta_1),\tilde \chi(\delta_1)\}$.
Note that $(\tilde \chi_+^{*})^{-1}(z)=a^{-1}z^{1/l}$ and by \eqref{eq critical point approximate}, we have  
\begin{align}\label{eq critical point approximate 2}
|\tilde \chi_+^{-1}(z)-a^{-1}z^{1/l}|\leq \max\{|\psi_+^{-1}(z)-a^{-1}z^{1/l}|,|\psi_-^{-1}(z)-a^{-1}z^{1/l}|\}
\end{align}
We take a small number $0<\delta'<\min\{\tilde \chi(-\delta_1),\tilde \chi(\delta_1)\}$ so that for all $0\leq z\leq \delta'$ we have 
\[
C_1 (\psi_+^{-1}(z))^{l+1} \leq (a\psi_+^{-1}(z))^{l} \text{ and }C_1 (\psi_-^{-1}(z))^{l+1} \leq 2^{-1}(a\psi_-^{-1}(z))^{l}
\]
Then, since $z=\psi_{\pm}(\psi_{\pm}^{-1}(z))=(a\psi_{\pm}^{-1}(z))^{l}\pm C_1(\psi_{\pm}^{-1}(z))^{l+1}$, for all $0\leq z\leq \delta'$ we obtain 
\begin{align}\label{eq critical point comprability}
   \psi_{+}^{-1}(z) \leq a^{-1}z^{1/l}\leq 2\psi_{+}^{-1}(z)
\text{ and }
   2^{-1}\psi_{-}^{-1}(z) \leq a^{-1}z^{1/l}\leq \psi_{-}^{-1}(z)
\end{align}
On the other hand, since we have $\psi_{\pm}(a^{-1}z^{1/l})-\psi_{\pm}(\psi_{\pm}^{-1}(z))=\pm C_1 a^{-(l+1)}z^{1+1/l}$, the mean value theorem implies that there exist two points $\xi_\pm$ between $a^{-1}z^{1/l}$ and $\psi_{\pm}^{-1}(z)$ such that 
\[
|C_1 a^{-(l+1)}z^{1+1/l}|=|\psi'_+(\xi_+)||a^{-1}z^{1/l}-\psi^{-1}_+(z)|=|\psi'_-(\xi_-)||a^{-1}z^{1/l}-\psi^{-1}_-(z)|.
\]
By shrinking $\delta'$ if necessary, for all $0\leq z\leq \delta'$ we have $2^{-1}la> C_1(l+1)a^{-l}z^{1/l}$.  
Since $\psi_\pm'(\xi_\pm)=l(a\xi_\pm)^{l-1}a\pm C_1(l+1)\xi_\pm^{l}$, \eqref{eq critical point comprability} yields that there exists a constant $C_2\geq1$ such that for all $0\leq z\leq \delta'$ we have 
\begin{align*}
    &\frac{1}{C_2}
    \left|\frac{C_1a^{-(l+1)}z^{2/l}}{la+ C_1(l+1)a^{-l}z^{1/l}}\right|
    \leq \left|\frac{z^{1/l}}{a}-\psi^{-1}_+(z)\right|
    \leq C_2\left|\frac{C_1a^{-(l+1)}z^{2/l}}{la+ C_1(l+1)a^{-l}z^{1/l}}\right| \text{ and }
    \\&\frac{1}{C_2}
    \left|\frac{C_1a^{-(l+1)}z^{2/l}}{la- C_1(l+1)a^{-l}z^{1/l}}\right|
    \leq \left|\frac{z^{1/l}}{a}-\psi^{-1}_-(z)\right|
    \leq C_2\left|\frac{C_1a^{-(l+1)}z^{2/l}}{la- C_1(l+1)a^{-l}z^{1/l}}\right|.
\end{align*}
Therefore, since $1<l<\infty$, there exists a constant $D\geq 1$ such that for all $0\leq z\leq \delta'$ we have 
\begin{align*}
    &\frac{z^{2/l}}{D}
    \leq \left|\frac{z^{1/l}}{a}-\psi^{-1}_+(z)\right|
    \leq Dz^{2/l}
    \text{ and }\frac{z^{2/l}}{D}
    \leq \left|\frac{z^{1/l}}{a}-\psi^{-1}_-(z)\right|
    \leq Dz^{2/l}.
\end{align*}
Combining this with \eqref{eq critical point approximate 2}, we obtain \eqref{eq critical important +}. Hence, the proof of \eqref{eq critical important +} and \eqref{eq critical important -} is complete.

For two numbers $0< \tilde x<\tilde y$ satisfying 
$\tilde y<\delta'$ and 
$\tilde y-\tilde x\geq \tau \tilde x$
we set
\[
\tilde J:=[0,\tilde y) \text{ and }\tilde I:=[0,\tilde x).
\]
$\tilde L ^{-1}_{\tilde J,\tilde I}$ and $\tilde R^{-1}_{\tilde J,\tilde I}$ denote the two connected components of $ \chi^{-1}(\tilde J\setminus\tilde I)$ satisfying $\tilde L ^{-1}_{\tilde J,\tilde I}\subset (-\infty,0)$ and $\tilde R^{-1}_{\tilde J,\tilde I}\subset (0,\infty)$. 
By \eqref{eq critical important +} and \eqref{eq critical important -}, we have 
\begin{align}\label{eq critical first stap}
    \frac{|\tilde L_{\tilde J,\tilde I}^{-1}|}{|\tilde \psi^{-1}(\tilde I)|}\geq \frac{(\tilde y^{1/l}-\tilde x^{1/l})-aD(\tilde y^{2/l}+\tilde x^{2/l})}{2\tilde x^{1/l}+2Da\tilde x^{2/l}}.
\end{align}
There exists $0<\delta''<\delta'$ such that for all $0\leq z<\delta''$ we have 
\begin{align}\label{eq critical point weak}
    z^{2/l}\leq (4aD)^{-1}z^{1/l}.
\end{align}
Let $0< \tilde x<\tilde y$ satisfy
$\tilde y<\delta''$ and 
$\tilde y-\tilde x\geq \tau \tilde x$.
We first consider the case where $\tilde y/\tilde x\geq 4^l$. By \eqref{eq critical first stap} and \eqref{eq critical point weak}, we have 
\begin{align}\label{eq critical point easy case}
    \frac{|\tilde L_{\tilde J,\tilde I}^{-1}|}{|\tilde \psi^{-1}(\tilde I)|}\geq 
    \frac{2^{-1}\tilde y^{1/l}-\tilde x^{1/l}}{2\tilde x^{1/l}+2Da\tilde x^{2/l}}=
    \frac{2^{-1}(\tilde y/\tilde x)^{1/l}-1}{2+2Da\tilde x^{1/l}}\geq 
    \frac{1}{2+2Da}.
\end{align}
Next, we consider the case where $\min\{1+\tau,4^l\} \leq \tilde y/\tilde x\leq 4^l$. By \eqref{eq critical first stap}, we have 
\begin{align}\label{eq critical point difficult case}
    &\frac{|\tilde L_{\tilde J,\tilde I}^{-1}|}{|\tilde \psi^{-1}(\tilde I)|}
    \geq 
    \frac{((\tilde y/\tilde x)^{1/l}-1)-aD(\tilde y^{1/l}(\tilde y/\tilde x)^{1/l}+\tilde x^{1/l})}{2+2Da\tilde x^{1/l}}
    \\&\geq\frac{(1+\tau)^{1/l}-1}{2+2Da}-\frac{aD(\tilde y^{1/l}(\tilde y/\tilde x)^{1/l}+\tilde x^{1/l})}{2+2Da\tilde x^{1/l}}.\nonumber
\end{align}
Since $\tilde y/\tilde x\leq 4^l$ we obtain
\begin{align*}
    \left|\frac{aD(\tilde y^{1/l}(\tilde y/\tilde x)^{1/l}+\tilde x^{1/l})}{2+2Da\tilde x^{1/l}}\right|\leq 
    \frac{aD(4\tilde y^{1/l}+\tilde x^{1/l})}{2}\leq \frac{aD(4+1)}{2}\tilde y^{1/l}. 
\end{align*}
Therefore, there exists $0<\delta:=\delta_c(\tau)<\delta''$ such that for all $0< \tilde x<\tilde y$ satisfying
$\tilde y<\delta''$ and 
$\min\{1+\tau,4^l\} \leq \tilde y/\tilde x\leq 4^l$ we obtain 
\[
\left|\frac{aD(\tilde y^{1/l}(\tilde y/\tilde x)^{1/l}-\tilde x^{1/l})}{2+2Da\tilde x^{1/l}}\right|\leq 
\frac{1}{2} \frac{(1+\tau)^{1/l}-1}{2+2Da}.
\]
 Combining this with \eqref{eq critical point difficult case}, for all $0\leq \tilde x<\tilde y$ satisfying
$\tilde y<\delta''$ and 
$\min\{1+\tau,4^l\} \leq \tilde y/\tilde x\leq 4^l$ we obtain 
\[
\frac{|\tilde L_{\tilde J,\tilde I}^{-1}|}{|\tilde \psi^{-1}(\tilde I)|}\geq \frac{1}{2} \frac{(1+\tau)^{1/l}-1}{2+2Da}.
\]
By setting $K_c'(\tau):=\min\{ \frac{(1+\tau)^{1/l}-1}{4+4Da}, \frac{1}{2+2Da}\}$ and using $\eqref{eq critical point easy case}$ and this inequality, we obtain \eqref{eq critical well inside normal form}. Thus, we are done.
\end{proof}

 Since $[0,1]$ is compact and $T$ is continuous on $[0,1]$, if $T$ has a critical point with order $l$ then there exists a constant $W\geq 1$ such that for all
 $x\in (0,1)\setminus\{c\}$ we have
\begin{align}\label{eq critical order}
    \frac{1}{W}\leq \frac{|T(x)-T(c)|}{|x-c|^{l}}\leq W  \text{ and } \frac{1}{W}\leq \frac{|T'(x)|}{|x-c|^{l-1}}\leq W.
\end{align}

\begin{lemma}\label{lemma ratio}
    Let $T$ be an $S$-unimodal map with a non-flat critical point and let $l$ be the critical order.  Then there exist $\eta>0$ and $C>0$ such that the following conditions hold:
    \begin{itemize}
        \item[(D1)] For each $0\leq y<x<T(c)$ we have 
    \[
    \frac{|T^{-1}((x,T(c)])|}{|T^{-1}((y,T(c)])|}\leq C\left(\frac{|(x,T(c)]|}{|(y,T(c)]|}\right)^{1/l}.
    \]
    \item[(D2)] For each $0\leq y_1<x_1<x_2<y_2<T(c)$ with $T(c)-y_1<\eta$, a connected component $I$ of $T^{-1}((x_1,x_2))$ and a connected component $J$ of $T^{-1}((y_1,y_2))$ with $I\subset J$ we have  
    \begin{align*}
        \frac{|I|}{|J|}\leq C\left(\frac{|(x_1,x_2)|}{|(y_1,y_2)|}\right)^{1/l}.
    \end{align*}
    \end{itemize}
\end{lemma}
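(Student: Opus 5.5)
The plan is to derive both (D1) and (D2) from the two-sided power bounds \eqref{eq critical order}, which say that $T$ behaves like $|x-c|^l$ near $c$. Distances to $T(c)$ are therefore comparable to $l$-th powers of distances to $c$.

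For (D1), fix $0\le y<x<T(c)$. The set $T^{-1}((x,T(c)])$ is an interval around $c$. Its endpoints $u$ satisfy $T(c)-T(u)=T(c)-x$, so by \eqref{eq critical order} we get $|u-c|\in[W^{-1/l}(T(c)-x)^{1/l},\,W^{1/l}(T(c)-x)^{1/l}]$. Hence $|T^{-1}((x,T(c)])|\le 2W^{1/l}(T(c)-x)^{1/l}$.

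For $T^{-1}((y,T(c)])$ the situation depends on how far $y$ is from $T(c)$. If $y\ge T(c)-\eta_0$ with $\eta_0$ small, both endpoints lie near $c$. In that case the same bounds give $|T^{-1}((y,T(c)])|\ge 2W^{-1/l}(T(c)-y)^{1/l}$. If $T(c)-y\ge\eta_0$, then $|T^{-1}((y,T(c)])|$ is bounded below by a positive constant, since preimages are monotone in $y$. Since $T(c)-y\le 1$, this lower bound is again $\gtrsim (T(c)-y)^{1/l}$. Dividing the two bounds gives (D1) with $C=W^{2/l}$ times a constant.

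(Strictly, \eqref{eq critical order} was stated only near $c$ in spirit. I would use it only for points whose image is within $\eta_0$ of $T(c)$, where it follows from the normal form, and otherwise use the compactness argument just given.)

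For (D2), choose $\eta$ small enough that $T^{-1}((T(c)-\eta,T(c)])$ lies inside the normal-form neighbourhood $[c-\delta_1,c+\delta_1]$. The components $I\subset J$ then lie on one side of $c$, say $c<\ldots$, or $J$ is a one-sided component. Work in the conjugated coordinates $\tilde\chi_+(x)=\phi(x)^l$ used in the proof of Lemma \ref{lemma well inside near critical point}. Write $s=\phi(x-c)$, so that $T(c)-T=s^l$ and $\phi$ is bi-Lipschitz. It then suffices to prove
\[
\frac{b_2^{1/l}-b_1^{1/l}}{a_2^{1/l}-a_1^{1/l}}\le C'\Big(\frac{b_2-b_1}{a_2-a_1}\Big)^{1/l}
\]
for $0< a_1\le b_1<b_2\le a_2$, where $a_i,b_i$ are the distances $T(c)-y_i$ and $T(c)-x_i$.

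This inequality is the main step. I would try the following argument first. The function $g(t)=t^{1/l}$ is concave and $1/l$-Hölder with constant $1$, so $b_2^{1/l}-b_1^{1/l}\le (b_2-b_1)^{1/l}$. For the denominator, concavity together with $a_1\ge0$ gives $a_2^{1/l}-a_1^{1/l}\ge (a_2-a_1)^{1/l}\cdot\min$ of something. Indeed, for $0\le a_1<a_2$ we have $a_2^{1/l}-a_1^{1/l}\ge (a_2-a_1)\,a_2^{1/l-1}\ge (a_2-a_1)^{1/l}\,((a_2-a_1)/a_2)^{1-1/l}$. This last factor can be small when $a_1$ is close to $a_2$, so I would split into two cases.

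If $a_2-a_1\ge a_2/2$, the denominator is $\gtrsim (a_2-a_1)^{1/l}$ and we are done.

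If $a_2-a_1<a_2/2$, then all points lie in $[a_2/2,a_2]$. There $g'$ is comparable to $a_2^{1/l-1}$ up to the factor $2^{1-1/l}$, so the ratio is comparable to $(b_2-b_1)/(a_2-a_1)$. This is at most $((b_2-b_1)/(a_2-a_1))^{1/l}$ because the ratio is at most $1$.

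Transferring back through $\phi$ costs only the bi-Lipschitz constant of $\phi$ on $[-\delta_1,\delta_1]$. This finishes (D2). The same argument also gives (D1) when $T(c)-y<\eta$, which can be used in place of the crude bound there.
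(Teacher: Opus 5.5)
Your proposal is correct. For (D1) it matches the paper, which simply invokes \eqref{eq critical order}. Your hedge there is unnecessary: the first inequality in \eqref{eq critical order} holds on all of $(0,1)\setminus\{c\}$, because away from $c$ both $|T(x)-T(c)|$ and $|x-c|^l$ are bounded above and away from zero. So $C=W^{2/l}$ works directly.

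For (D2) your route is genuinely different and more elementary. The paper uses the asymptotic expansion $\tilde\chi_\pm^{-1}(z)=\pm a^{-1}z^{1/l}+O(z^{2/l})$ from the proof of Lemma \ref{lemma well inside near critical point}. It shrinks $\eta$ so that the $O(z^{2/l})$ errors are absorbed, and splits into three cases according to whether $\tilde y_1/\tilde y_2$ and $\tilde x_1/\tilde x_2$ exceed $2$, using derivative bounds for $\tilde\chi_\pm$ on comparable scales. You instead use the exact identity $\tilde\chi_\pm^{-1}(z)=\phi^{-1}(\pm z^{1/l})$, so that only the bi-Lipschitz constant of $\phi$ on $[-\delta_1,\delta_1]$ enters, and you reduce to an inequality for the pure power $t^{1/l}$. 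Your case $a_1>a_2/2$ is the paper's first case. In the other case, the subadditivity bound $b_2^{1/l}-b_1^{1/l}\le (b_2-b_1)^{1/l}$ controls the numerator regardless of the ratio $b_2/b_1$, which merges the paper's second and third cases. Your argument is shorter and needs only that $\phi$ is a $C^1$ diffeomorphism. It also needs $\eta$ small only so that $J$ lies inside the normal-form neighbourhood. The paper's version mainly has the advantage of reusing machinery it had already built for the previous lemma.

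Two small slips, neither affecting the argument:
\begin{itemize}
\item The mean value theorem gives $a_2^{1/l}-a_1^{1/l}\ge \frac{1}{l}(a_2-a_1)a_2^{1/l-1}$. Without the factor $\frac1l$ the inequality fails for every $0<a_1<a_2$, since $1-t^{1/l}<1-t$ for $t\in(0,1)$. This only changes the constant.
\item Your labels are reversed. With $0\le y_1<x_1<x_2<y_2$ one has $a_1=T(c)-y_2$, $a_2=T(c)-y_1$, $b_1=T(c)-x_2$ and $b_2=T(c)-x_1$.
\end{itemize}
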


\begin{proof}
 (D1) follows from \eqref{eq critical order}. Therefore, we will show (D2).
    Since $T$ has a critical point with order $l$, 
    there exists a $C^3$ diffeomorphism $\phi:\mathbb{R}\rightarrow \mathbb{R}$ with $\phi(0)=0$ and $0<\delta_1<\min\{c,1-c\}$ such that for all $x\in [c-\delta_1,c+\delta_1]$ we have 
    $
    T(x)=T(c)-|\phi(x-c)|^l.
    $
    As in the proof of Lemma \ref{lemma well inside near critical point}, we may assume that $\phi$ is monotone increasing. We set 
    $
        a:=\phi'(0)>0.
    $
    For $x\in [-\delta_1,\delta_1]$ we set $\tilde \chi(x):=|\phi(x)|^l$.
It is enough to show that 
there exist $0<\eta\leq \min\{\tilde \chi(-\delta_1),\tilde \chi(\delta_1)\}$ and $C>0$ such that 
for each $0< \tilde y_2<\tilde x_2<\tilde x_1<\tilde y_1 <\eta$, a connected component $\tilde I$ of $\tilde \chi^{-1}((\tilde x_2,\tilde x_1))$ and a connected component $\tilde J$ of $\tilde \chi^{-1}((\tilde y_2,\tilde y_1))$ with $\tilde I\subset \tilde J$ we have 
\[
    \frac{|\tilde I|}
    {|\tilde J|}
    \leq C\left(\frac{|(\tilde x_2,\tilde x_1)|}{|(\tilde y_2,\tilde y_1) |}\right)^{1/l}.
\]
We define $\tilde \chi_+:=\tilde \chi|_{[0,\delta_1]}$ and $\tilde \chi_-:=\tilde \chi|_{[-\delta_1,0]}$. By \eqref{eq critical important +} and \eqref{eq critical important -}, 
there exists $0<\eta'<\min\{\tilde \chi(-\delta_1),\tilde \chi(\delta_1)\}$ and $D>0$ such that for all $0\leq z\leq \eta'$ we have 
\begin{align}\label{eq important restate}
   & a^{-1} z^{1/l}-Dz^{2/l}\leq \tilde \chi_+^{-1}(z)\leq a^{-1} z^{1/l}+Dz^{2/l}
\text{ and } 
    \\&-a^{-1} z^{1/l}-Dz^{2/l}\leq \tilde \chi_-^{-1}(z)\leq -a^{-1} z^{1/l}+Dz^{2/l}. \nonumber
\end{align}
 Let $0< \tilde y_2<\tilde x_2<\tilde x_1<\tilde y_1 <\eta'$. 
 By \eqref{eq critical order}, there exists a constant $D_1\geq 1$ such that for all $z\in (-\delta_1,\delta_1)$ we have 
\begin{align}\label{eq derivatibe comperability}
    D^{-1}_1|z|^{l-1}\leq |\tilde \chi_{\pm}'(z)| \leq D_1 |z|^{l-1}.
\end{align}
We first consider the case where $\tilde y_1/\tilde y_2\leq 2$. 
By \eqref{eq important restate} and by shrinking $\eta'$ if necessary, there exists a constant $C\geq 1$ such that for all $0\leq z\leq \eta'$ we have $C^{-1}z^{1/l}\leq |\tilde \chi_{\pm}^{-1}(z)|\leq Cz^{1/l}$.
Hence, by \eqref{eq derivatibe comperability},
for all $z\in [\tilde y_2,\tilde y_1]$ we have $(2D_1)^{-1}C^{-(l-1)}\tilde y_2^{1/l-1}\leq |(\tilde \chi_\pm^{-1})'(z)|\leq C^{l-1}D_1\tilde y_2^{1/l-1}$. Thus, by the mean value theorem, we have 
\begin{align}\label{eq ratio D2 case 1}
    \frac{|\tilde \chi_\pm^{-1}((\tilde x_2,\tilde x_1))|}{|\tilde \chi_\pm^{-1}((\tilde y_2,\tilde y_1))|}
    \leq 2D_1^2C^{2(l-1)} \frac{|(\tilde x_2,\tilde x_1)|}{|(\tilde y_2,\tilde y_1)|}\leq  
    2D_1^2 C^{2(l-1)}\left(\frac{|(\tilde x_2,\tilde x_1)|}{|(\tilde y_2,\tilde y_1)|}\right)^{1/l}.
\end{align}

Since $\lim_{t\to\infty}(t^{1/l}-1)/(t-1)^{1/l}=1$, there exists a constant $D_2\geq 1$ such that for all $0< \tilde z_2<\tilde z_1 <\eta'$ with $\tilde z_1> 2 \tilde z_2$ we have
\begin{align}\label{eq ratio comperability y}
   {D_2}^{-1}(\tilde z_1-\tilde z_2)^{1/l} \leq {\tilde z^{1/l}_1-\tilde z^{1/l}_2}{}\leq D_2(\tilde z_1-\tilde z_2)^{1/l}.
\end{align}
Moreover, for all $0< \tilde z_2<\tilde z_1 <\eta'$ with $\tilde z_1> 2 \tilde z_2$ we have 
\begin{align*}
    \frac{\tilde z_1^{2/l}+\tilde z_2^{2/l}}
    {\tilde z_1^{1/l}-\tilde z_2^{1/l}}
    \leq\tilde z_1^{1/l}\frac{(\tilde z_1/\tilde z_2)^{1/l}}{(\tilde z_1/\tilde z_2)^{1/l}-1}+ \frac{\tilde z_2^{1/l}}{2^{1/l}-1}. 
\end{align*}
Since the function $t\in [2,\infty]\mapsto t^{1/l}/(t^{1/l}-1)$ is bounded, by shirking $\eta'$ if necessary, for all $0< \tilde z_2<\tilde z_1 <\eta'$ with $\tilde z_1> 2 \tilde z_2$ we obtain
\begin{align}\label{eq ratio error}
    \tilde z_1^{2/l}+\tilde z_2^{2/l}\leq (2aD)^{-1}(\tilde z_1^{1/l}-\tilde z_2^{1/l}).
\end{align}
Thus, by \eqref{eq important restate} and \eqref{eq ratio comperability y}, for all $0< \tilde z_2<\tilde z_1 <\eta'$ with $\tilde z_1> 2 \tilde z_2$ we have
\begin{align}\label{eq error 1}
    &|\tilde \chi_\pm^{-1}((\tilde z_2,\tilde z_1))|\geq a^{-1}(\tilde z_1^{1/l}-\tilde z_2^{1/l})-D(\tilde z_1^{2/l}+\tilde z_2^{2/l})
    \\&\geq (2a)^{-1}(\tilde z_1^{1/l}-\tilde z_2^{1/l})
    \geq (2aD_2)^{-1}(\tilde z_1-\tilde z_2)^{1/l}. \nonumber
\end{align}
and 
\begin{align}\label{eq error 2}
    &|\tilde \chi_\pm^{-1}((\tilde z_2,\tilde z_1))|\leq a^{-1}(\tilde z_1^{1/l}-\tilde z_2^{1/l})+D(\tilde z_1^{2/l}+\tilde z_2^{2/l})
    \\&\leq 2a^{-1}(\tilde z_1^{1/l}-\tilde z_2^{1/l})
    \leq 2a^{-1}D_2(\tilde z_1-\tilde z_2)^{1/l}. \nonumber
\end{align}

Next, we consider the case where $\tilde y_1/\tilde y_2> 2$ and $\tilde x_1/\tilde x_2\leq 2$.
 By \eqref{eq derivatibe comperability}, for all $z\in [\tilde x_2,\tilde x_1]$ we have 
 $(2D_1)^{-1}C^{-(l-1)}\tilde x_2^{1/l-1}\leq |(\tilde \chi_\pm^{-1})'(z)|\leq C^{l-1}D_1\tilde x_2^{1/l-1}$.
 Thus, by the mean value theorem and \eqref{eq error 1}, we have 
\begin{align}\label{eq ratio D2 case 2}
    &\frac{|\tilde \chi_\pm^{-1}((\tilde x_2,\tilde x_1))|}{|\tilde \chi_\pm^{-1}((\tilde y_2,\tilde y_1))|}
    \leq D_3
     \frac{\tilde x_2^{1/l-1}|(\tilde x_2,\tilde x_1)|^{1-1/l}|(\tilde x_2,\tilde x_1)|^{1/l}}{|(\tilde y_2,\tilde y_1)|^{1/l}}
    \\&\leq 3D_3 
     \left(\frac{|(\tilde x_2,\tilde x_1)|}{|(\tilde y_2,\tilde y_1)|}\right)^{1/l}, \nonumber
\end{align}
where $D_3:=2a D_2D_1C^{l-1}$.
If $\tilde y_1/\tilde y_2> 2$ and $\tilde x_1/\tilde x_2> 2$ then by \eqref{eq error 1} and \eqref{eq error 2}, we obtain 
\[
\frac{|\tilde \chi_\pm^{-1}((\tilde x_2,\tilde x_1))|}{|\tilde \chi_\pm^{-1}((\tilde y_2,\tilde y_1))|}
    \leq 4D_2^2 
     \left(\frac{|(\tilde x_2,\tilde x_1)|}{|(\tilde y_2,\tilde y_1)|}\right)^{1/l}.
\]
Combining this with \eqref{eq ratio D2 case 1} and \eqref{eq ratio D2 case 2}, we obtain (D2) and the proof is complete.
\end{proof}

\section{Slowly recurrent condition}\label{sec slowly recurrent}

In this section, we prove that the orbit of Lebesgue almost every $x\in [0,1]$ satisfies the slowly recurrent condition. 

\begin{definition}\label{def slowly recurrent}
    Let $T$ be an $S$-unimodal map and let $x\in [0,1]$. 
    The orbit $O(x):=\{f^n(x):n\in\mathbb{N}\}$  of $x$ is said to satisfy the slowly recurrent condition if 
    for each $\alpha>0$, there exist a constant $C>0$ and $N\in\mathbb{N}$ such that for all $n\geq N$ we have 
    \begin{align}\label{eq slowly recurrent}
        |T^n(x)-c|\geq C e^{-\alpha n}.
    \end{align}
\end{definition}

The following two lemmas are due to Przytycki \cite{characteristic1993} (see \cite[Section 3]{characteristic1993} for the proof of the real version).

\begin{lemma}{\cite[Lemma 1 and Section 3]{characteristic1993}} \label{lemmma Ruelle's rule}
    Let $T$ be an $S$-unimodal map with a non-flat critical point. We assume that $T$ has no periodic attractors. Then there exists a constant $C>0$ such that for each $\epsilon>0$ and $n\in\mathbb{N}$ with $T^{n}(B(c,\epsilon))\cap B(c,\epsilon)\neq \emptyset$ we have
    \begin{align*}
        n\geq -C\log \epsilon.
    \end{align*}
\end{lemma}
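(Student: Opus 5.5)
The plan is to prove the statement by contradiction, combining the Koebe principle (Lemma \ref{lemma Koebe}), the macroscopic Koebe principle (Lemma \ref{lemma macroscopic Koebe}), and the fact that $T$ has no periodic attractors. First we reduce to the case of small $\epsilon$. If $\epsilon\geq \epsilon_0$ for some fixed $\epsilon_0>0$, the inequality $n\geq -C\log\epsilon$ holds for every $n\geq 1$ as soon as $C\leq 1/\log(1/\epsilon_0)$. So it is enough to show that there are $\epsilon_0>0$ and $\kappa>0$ such that $T^n(B(c,\epsilon))\cap B(c,\epsilon)\neq\emptyset$ with $\epsilon<\epsilon_0$ forces $\epsilon\geq e^{-n/\kappa}$.

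Fix $\epsilon$ and $n$ with $T^n(B(c,\epsilon))\cap B(c,\epsilon)\neq\emptyset$, and let $U=B(c,\epsilon)$. Following Przytycki, the goal is to show that $|T^n(U)|$ is not much larger than $|U|$ times a factor at most exponential in $n$, while the returning branch must be expanding in a definite way. Consider the maximal pullback. Let $V=B(c,2\epsilon)$, which contains a $1/2$-scaled neighbourhood of $U$. Let $W$ be the component of $T^{-n}(\cdot)$ of a slightly larger neighbourhood of $T^n(U)$ that contains $U$. Count the times $0<k<n$ at which $T^k(W)$ contains $c$. At each such time the map folds, and the pair $T^k(U)\subset T^k(W)$ can be controlled by the critical-point estimates (Lemma \ref{lemma well inside near critical point} and (D1)--(D2) of Lemma \ref{lemma ratio}). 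Between such times, the branches are diffeomorphic, so Lemma \ref{lemma Koebe} and Lemma \ref{lemma macroscopic Koebe} apply.

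The cleanest route, and the one I would actually try first, is the shadowing argument in Przytycki's Lemma 1. Suppose $T^n(x)\in B(c,\epsilon)$ with $|x-c|<\epsilon$ and $\epsilon$ super-exponentially small relative to $n$, that is, $\epsilon<e^{-n/\kappa}$ with $\kappa$ large. Since $|(T^n)'|\leq \Lambda^n$ with $\Lambda=\sup|T'|$, the interval $B(c,\epsilon\Lambda^{-n}\cdot\text{const})$ is mapped into a small neighbourhood of $c$. Moreover, near $c$ the derivative is at most $|x-c|^{l-1}$. So the composition $T^n$ on $B(c,r)$, with $r$ comparable to $\epsilon$, has derivative bounded by $W\Lambda^{n}\epsilon^{l-1}$, which is less than $1$ once $\epsilon<\Lambda^{-n/(l-1)}/C'$. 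In that case $T^n$ maps a neighbourhood $B(c,r)$ into itself: the image has size at most $\Lambda^n W r^l$ around a point within $\epsilon$ of $c$, and this is at most $r$ when $r=3\epsilon$ and $\epsilon^{l-1}\Lambda^n$ is small. A continuous map sending an interval into itself with contraction has an attracting fixed point of $T^n$ whose basin contains an interval, which is a periodic attractor. This contradicts the hypothesis.

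Hence $\epsilon\geq C''\Lambda^{-n/(l-1)}$, that is, $n\geq -\frac{l-1}{\log\Lambda}\log\epsilon-\text{const}$, and shrinking $\epsilon_0$ absorbs the constant. The main obstacle is justifying the self-mapping step. First, $|T^n(y)-T^n(x)|\leq \sup|(T^{n-1})'|\cdot|T(y)-T(x)|\leq \Lambda^{n-1}W(|y-c|^l+|x-c|^l)$, which handles the critical fold using \eqref{eq critical order}. Second, $T^n(B(c,3\epsilon))\subset B(c,3\epsilon)$ then yields an attracting periodic orbit. Here the derivative bound $\Lambda^{n-1}W(3\epsilon)^{l-1}<1$ gives a strict contraction, so $T^n$ has a unique fixed point in $B(c,3\epsilon)$ attracting the whole ball. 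Its orbit is therefore a periodic orbit whose basin has non-empty interior, contradicting the assumption.
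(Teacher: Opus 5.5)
Your second route (the shadowing argument) is correct, and it is essentially Przytycki's own proof of \cite[Lemma 1]{characteristic1993} carried over to the interval; the paper cites that lemma without proving it. If $\Lambda^{n-1}W\epsilon^{l-1}$ is small, then $T^n$ maps $\overline{B(c,3\epsilon)}$ into itself as a strict contraction, which yields a periodic attractor; hence $\epsilon\geq C''\Lambda^{-n/(l-1)}$. The Koebe/pullback discussion in your second paragraph plays no role and can be dropped, and you should take $\Lambda:=\max\{\sup|T'|,2\}$ so that $\log\Lambda>0$.
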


\begin{lemma}{\cite[Lemma 3 and Section 3]{characteristic1993}}\label{lemma small ball}
    Let $T$ be an $S$-unimodal map with a non-flat critical point. We assume that $T$ has no periodic attractors. Then for each $\epsilon>0$, $0<k<1$ and $M\in\mathbb{N}$ there exists $\delta>0$ such that if for some $n\in\mathbb{N}$ and a connected component $V$ of $T^{-n}(B(c,\delta))$  there are not more that $M$ critical points of $T^n$ in $V$ then $|W|<\epsilon$ for every connected component $W$ of $T^{-n}(B(c,k\delta))$ with $W\subset V$.
\end{lemma}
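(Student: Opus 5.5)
We argue by contradiction and compactness. The idea is to show that a counterexample would force a nondegenerate interval whose iterates shrink to $c$ along a subsequence, which the absence of periodic attractors (together with the absence of wandering intervals) forbids.

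Fix $\epsilon>0$, $0<k<1$ and $M\in\mathbb{N}$, and suppose the statement fails. Then there are $\delta_j\to 0$, integers $n_j$, components $V_j$ of $T^{-n_j}(B(c,\delta_j))$ and components $W_j\subset V_j$ of $T^{-n_j}(B(c,k\delta_j))$ such that $T^{n_j}$ has at most $M$ critical points in $V_j$ and $|W_j|\geq\epsilon$.

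\textbf{Step 1 (scaled neighbourhood).} We show that $V_j$ contains a $\tau$-scaled neighbourhood of $W_j$ for some $\tau=\tau(k,M)>0$ independent of $j$.
\begin{enumerate}[(a)]
\item Split the orbit $V_j,T(V_j),\dots,T^{n_j}(V_j)$ at the at most $M$ times $0\le s_1<\dots<s_m<n_j$ at which $T^{s_i}(V_j)\ni c$. This writes $T^{n_j}|_{V_j}$ as at most $M+1$ diffeomorphic blocks, alternating with at most $M$ single folding steps through $c$.
\item At the top, $B(c,\delta_j)$ contains a $\frac{1-k}{2k}$-scaled neighbourhood of $B(c,k\delta_j)$.
\item We then pull back step by step. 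Across each diffeomorphic block whose image is small (which is the case, since $\delta_j$ is small), Lemma \ref{lemma macroscopic Koebe} converts a $\tau'$-scaled neighbourhood into a $K'(\tau')$-scaled one.
\item Across each folding step, Lemma \ref{lemma well inside near critical point}, with (D2) of Lemma \ref{lemma ratio}, does the same through $T^{-1}$ near $c$. Here one separates the case where the folded image is a one-sided neighbourhood of $T(c)$ from the case where it is not.
\end{enumerate}
Since there are at most $2M+1$ such steps, we get a uniform $\tau>0$.

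The main obstacle is Step 1(c)--(d). The smallness hypothesis $|T^s(J)|<\eta$ in Lemma \ref{lemma macroscopic Koebe} must be checked at each intermediate block, not only at the top; it is not automatic for blocks far below the top. I would first try to handle this by induction from the top down, shrinking $\delta_j$. If that fails, I would fall back on the ordinary Koebe principle (Lemma \ref{lemma Koebe}), which only gives distortion bounds. That weaker information still suffices to control ratios, and hence to conclude that $|V_j|\ge(1+2\tau)\epsilon$, with $W_j$ well inside $V_j$.

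\textbf{Step 2 (limit intervals).} Pass to a subsequence so that $W_j\to W^*$ and $V_j\to U$ in the Hausdorff sense. Then $|W^*|\ge\epsilon$, and $U$ contains a $\tau$-scaled neighbourhood of $W^*$. If $n_j$ is bounded, pass to a subsequence with $n_j=n$ constant. Then $T^n(W^*)$ is contained in the limit of $T^n(W_j)\subset B(c,k\delta_j)$, so $T^n(W^*)=\{c\}$. This is impossible for a nondegenerate interval, since $T$ has finitely many critical points.

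\textbf{Step 3 ($n_j\to\infty$).} For fixed $i$ and large $j$, the interval $T^{n_j}(W^*)$ is contained in a slightly enlarged $T^{n_j}(W_j)$, up to the boundary effect controlled by the $\tau$-neighbourhood. So $|T^{n_j}(W^*)|\to 0$, with $T^{n_j}(W^*)$ accumulating on $c$. $S$-unimodal maps with non-flat critical point have no wandering intervals, so $W^*$ is not wandering: there are $a<b$ with $T^a(W^*)\cap T^b(W^*)\neq\emptyset$. Setting $p=b-a$, the union $L=\bigcup_{i\ge0}T^{a+ip}(W^*)$ is an interval with $T^p(L)\subset L$. Since $T^p(L)\subset L$, the only way for some component of the orbit of $L$ to have lengths tending to $0$ would be convergence to a periodic orbit; absent that, the components of the orbit of $L$ have lengths bounded below by a constant $\rho>0$. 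Because there is no periodic attractor, $T^{n}(W^*)$ cannot converge to a periodic orbit. Hence $|T^{n}(W^*)|$ is bounded below for large $n$, contradicting $|T^{n_j}(W^*)|\to0$.
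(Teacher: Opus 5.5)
\paragraph{The gap is in the last inference of Step 3.}

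From $T^p(L)\subset L$ and the absence of periodic attractors you can indeed show that $L_\infty:=\bigcap_{s\ge 0}T^{sp}(\overline{L})$ is a nondegenerate interval with $T^p(L_\infty)=L_\infty$. Hence $\inf_n |T^n(L)|>0$. But $W^*$ is only one of the overlapping pieces $T^{a+ip}(W^*)$ whose union is $L$. You have $T^{a+ip}(W^*)\subset T^{ip}(L)$, which is the wrong inclusion. A lower bound on the lengths of the iterates of $L$ therefore says nothing about $|T^n(W^*)|$, since a chain of short overlapping intervals can have a long union. So ``Hence $|T^n(W^*)|$ is bounded below'' is a non sequitur, and the whole contradiction rests on it.

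What is missing is an inclusion in the other direction: some iterate of your interval must contain an interval that is mapped into itself. One way to get it is as follows.
\begin{enumerate}[(i)]
\item Work with a compact $J\subset \mathrm{int}\,W^*$, so that $T^{n_j}(J)\subset B(c,k\delta_j)$ for large $j$ (see below).
\item $J$ is not a homterval. An $S$-unimodal map with non-flat critical point and no periodic attractors has none, by no wandering intervals plus the homterval lemma; compare the use of \cite[Theorem 2.6]{Gukenheimer} in Section~\ref{sec proof of theorem quadratic}. Hence $B(c,r)\subset T^i(J)$ for some $i\ge 0$ and $r>0$.
\item For one $j$ with $k\delta_j\le r/2$, the interval $I:=\overline{B(c,r/2)}$ satisfies $T^m(I)\subset T^{n_j}(J)\subset I$, where $m:=n_j-i$.
\item The nested intersection $I_\infty:=\bigcap_{s\ge0}T^{sm}(I)$ satisfies $T^m(I_\infty)=I_\infty$. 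It cannot be a point, since that point would be periodic and attract all of $I$, giving a periodic attractor.
\item Therefore $|T^n(I)|\ge \min_{0\le q<m}|T^q(I_\infty)|>0$ for all $n$. This contradicts $|T^{n_{j'}-i}(I)|\le |T^{n_{j'}}(J)|\le 2k\delta_{j'}\to 0$.
\end{enumerate}

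\paragraph{Step 1 should simply be dropped.}

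As you suspected, invoking Lemma~\ref{lemma macroscopic Koebe} there is circular in the context of this paper. The smallness $|T^s(J)|<\eta$ at intermediate times, namely \eqref{eq small preimage}, is obtained precisely from Lemma~\ref{lemma small ball}.

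Moreover, 1(d) is not an argument. At a fold, $T^{s+1}(W_j)$ and $T^{s+1}(V_j)$ can share the endpoint $T(c)$, so no scaled neighbourhood survives. Handling this needs the one-sided bookkeeping the paper does with the choice of $q(n)$ and the intervals $\tilde V$.

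None of this is needed. If $W_j\to W^*$ with $|W^*|\ge\epsilon$, then any compact $J\subset\mathrm{int}\,W^*$ lies in $W_j$ for all large $j$, so $T^{n_j}(J)\subset B(c,k\delta_j)$ directly. In particular, the bound $M$ on the number of critical points plays no role in the real setting. Your Step 2 together with the repaired Step 3 gives the lemma from compactness plus the absence of homtervals and of periodic attractors.

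Note that the paper itself does not prove this lemma; it imports it from Przytycki~\cite{characteristic1993}.
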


The following proposition is the main result of this section.
\begin{prop}\label{prop slowly recurrent}
    Let $T$ be an $S$-unimodal map with a non-flat critical point. We assume that $T$ has no periodic attractors. 
    Then for $\Leb$-almost every $x\in [0,1]$ the orbit $O(x)$ of $x$ satisfies the slowly reccurent condition.
\end{prop}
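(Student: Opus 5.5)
We prove Proposition \ref{prop slowly recurrent} by a Borel--Cantelli argument. Fix $\alpha>0$ and set $B_n:=B(c,e^{-\alpha n})$. It suffices to show that
\[
\sum_{n\ge 1}\Leb\bigl(T^{-n}(B_n)\bigr)<\infty .
\]
Indeed, then almost every $x$ lies in only finitely many of the sets $T^{-n}(B_n)$. For such $x$ and all large $n$ we have $|T^n(x)-c|\ge e^{-\alpha n}$. Taking a countable sequence $\alpha_j\downarrow 0$ gives the slowly recurrent condition for every $\alpha>0$, since the condition for $\alpha$ is implied by the condition for any $\alpha_j<\alpha$. We will in fact prove the summable bound $\Leb(T^{-n}(B_n))\le C e^{-\beta n}$ for some $\beta=\beta(\alpha)>0$.

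\textbf{Step 1: bounded criticality.} Fix $n$, set $\epsilon_n:=e^{-\alpha n/2}$, and let $V$ be a connected component of $T^{-n}(B(c,\epsilon_n))$. A critical point of $T^n|_V$ is a point $y\in V$ with $T^j(y)=c$ for some $0\le j<n$. If $y_1,y_2\in V$ are two such points with times $j_1<j_2$, then $T^{j_2-j_1}(c)=T^{j_2}(y_1)$.

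The plan is to show that consecutive critical times in $V$ are separated by at least $-C\log\epsilon_m$ for a suitable scale $\epsilon_m$, so that Lemma \ref{lemmma Ruelle's rule} bounds the number of critical points of $T^n|_V$ by a constant $M=M(\alpha)$. The relevant scale comes from the fact that $T^{n-j}$ maps a neighbourhood of $c$ into $B(c,\epsilon_n)$. This follows Przytycki's argument, as in \cite{Lyapunovholomorphic2016}. The same step also bounds the number of components $V$ meeting the critical orbit.

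\textbf{Step 2: distortion and smallness.} Let $W\subset V$ be a component of $T^{-n}(B_n)$. Between consecutive critical times $T^n|_W$ factors into at most $M+1$ diffeomorphic branches and at most $M$ applications of $T$ near $c$. We transport the ratio $|B_n|/|B(c,\epsilon_n)|=e^{-\alpha n/2}$ backwards through this factorisation:
\begin{itemize}
\item on diffeomorphic branches we use the Koebe principle (Lemma \ref{lemma Koebe}) together with the macroscopic Koebe principle (Lemma \ref{lemma macroscopic Koebe}), which apply once the images are small; Lemma \ref{lemma small ball} with $k=1/2$ guarantees this smallness, after shrinking $\epsilon_n$ if needed;
\item at each passage near $c$ we use Lemma \ref{lemma ratio} (D1)/(D2) and Lemma \ref{lemma well inside near critical point}; each passage costs at most a power $1/l$ in the ratio.
\end{itemize}
This should give
\[
\frac{|W|}{|V|}\le C_M\, e^{-\alpha n/(2l^{M})} .
\]
Since distinct components $V$ are disjoint, summing over $V$ yields $\Leb(T^{-n}(B_n))\le C_M e^{-\beta n}$ with $\beta:=\alpha/(2l^M)$, which is summable.

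\textbf{Main obstacle.} The delicate point is Step 1: $M$ must be uniform in $n$, independent of where the preimage sits. The difficulty is that Lemma \ref{lemmma Ruelle's rule} gives return times only logarithmic in $1/\epsilon$. I would first try to choose the inner radius as $\epsilon_n=e^{-\alpha' n}$ with $\alpha'$ a fixed fraction of $\alpha$. The return-time bound then forces gaps of order $C\alpha' n$ between critical times, so there are at most roughly $1/(C\alpha')$ critical times. If the orbit is not chained backwards correctly near the last critical time, I would fall back to the neighbourhood-enlargement argument of \cite[Lemma 1]{characteristic1993}.
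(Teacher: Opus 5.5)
Your overall architecture matches the paper's: Borel--Cantelli along $\alpha_k=1/k$, a uniform bound on the number of critical times from Lemma \ref{lemmma Ruelle's rule}, and a pull-back of a ratio of nested balls that loses a power $1/l$ at each passage near $c$. Step 1 is not the real obstacle, though. If $j_1<j_2$ are critical times of $V$, then $T^{n-j_2}(c)\in B(c,\epsilon_n)$ and $T^{j_2-j_1}(T^{n-j_2}(c))=T^{n-j_1}(c)\in B(c,\epsilon_n)$. The lemma with $\epsilon=\epsilon_n$ then gives $j_2-j_1\geq C\alpha n/2$ at once. The same argument shows that only boundedly many postcritical points $T^k(c)$, $0\leq k<n$, lie in $B(c,\epsilon_n)$.

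\textbf{The gap is in Step 2.} You pull back the pair $B_n\subset B(c,\epsilon_n)$ directly, but some $T^k(c)$ may lie in the annulus $B(c,\epsilon_n)\setminus B_n$. Then at some level the outer piece contains $c$ while the inner piece does not. The outer branch is then not injective, so it cannot serve as the diffeomorphic extension in Lemma \ref{lemma Koebe} or Lemma \ref{lemma macroscopic Koebe}. Worse, if $T^k(c)$ is just outside $B_n$, the inner piece lies arbitrarily close to $c$ inside its half of the outer piece. There is then no space on that side, and the later pull-backs are uncontrolled.

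Even when the critical times do match, Koebe with the outer piece as extension only controls distortion on the inner piece. Carrying an exponentially small inner/outer ratio through a long branch needs one of two things:
\begin{itemize}
\item bounded distortion on a domain containing the outer piece, or
\item a space estimate growing linearly in $\tau$.
\end{itemize}
Lemma \ref{lemma macroscopic Koebe} as stated only provides some $K'(\tau)>0$. So the bound $|W|/|V|\leq C_M e^{-\alpha n/(2l^M)}$ is not supported by your sketch.

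\textbf{The missing idea} is the paper's pigeonhole on annuli free of postcritical points, used at two scales.
\begin{itemize}
\item First scale. Split $B(c,e^{-\alpha n})\setminus B(c,e^{-\alpha(n+1)})$ into $M+3$ annuli of radius ratio $e^{\alpha/(M+3)}$; one of them contains no $T^k(c)$, $k<n$. For that pair the inner and outer pull-backs have the same critical times, and the outer branches are genuine diffeomorphic extensions. A fixed amount of Koebe space then propagates inductively: Lemma \ref{lemma macroscopic Koebe} along branches, Lemma \ref{lemma well inside near critical point} at passages, and smallness from Lemma \ref{lemma small ball}. This yields uniformly bounded distortion on all branches at scale $e^{-\alpha n}$ (Claim \ref{claim distorsion}).
\item Second scale. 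Split $B(c,e^{-2\alpha n})\setminus B(c,e^{-4\alpha n})$ into $4(M+1)$ annuli of radius ratio $e^{\alpha n/(2(M+1))}$, and choose a postcritical-free one $B\subset B'$. The pull-backs $W\subset W'$ share critical times and sit inside pieces where Claim \ref{claim distorsion} applies. The ratio is therefore carried by the mean value theorem along branches and by Lemma \ref{lemma ratio} at passages. This gives $|W^4|/|\mathrm{Inc}_3(W^4)|\lesssim e^{-\alpha n/(2(M+1)l^M)}$ (Claim \ref{claim exponentially small}).
\end{itemize}
Borel--Cantelli is then applied to $T^{-n}(B(c,e^{-4\alpha n}))$. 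The pigeonhole costs a factor $M+1$ in the exponent, which is harmless.

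Your summation also needs a bound on the number of $W$ inside each $V$. This holds because $T^n|_V$ has at most $M+1$ laps; the paper instead uses that $W^4\mapsto\mathrm{Inc}_3(W^4)$ is at most $2^M$-to-one.
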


\begin{proof}
Let $\alpha>0$. For $n\in\mathbb{N}$ we define
\[
B_n^1:=B(c,e^{-\alpha n}).
\]
We denote by $C$ the constant obtained in Lemma \ref{lemmma Ruelle's rule}.
Let $n\in \mathbb{N}$ and let $W$ be a connected component of $T^{-n}(B_n^1)$. Suppose that $T^{j_1}(W)$ and $T^{j_2}(W)$ both contain $c$ for some $0\leq j_1<j_2\leq n$. Then there exist $x_1,x_2\in W$ such that $T^
{j_1}(x_1)=T^{j_2}(x_2)=c$. Note that since $x_1,x_2\in W$, for $i\in \{1,2\}$ we have
\[
T^{n-j_i}(c)=T^{n-j_i}(T^{j_i}(x_i))=T^n(x_i)\in B_n^1
\text{ and }
T^{j_2-j_1}(T^{n-j_2}(c))=T^{n-j_1}(c).
\]
Hence, by applying Lemma \ref{lemmma Ruelle's rule} with $n=j_2-j_1$ and $\epsilon=e^{-\alpha n}$, we have 
\[
j_2-j_1\geq \alpha Cn.
\]
Thus, we obtain
\begin{align}\label{eq finiteness}
\#\{0\leq j\leq n: c\in T^j(W)\}\leq M:=2+(\alpha C)^{-1}. 
\end{align}
Let $N\in\mathbb{N}$ be a large number such that for all $n\geq N$ we have $B_n^1\subset (0,1)$. Let $n\geq N$ and let $B\subset B_n^1$ be an open ball centered at $c$. Let $W$ be a connected component of $T^{-n}(B)$.
We define the sequence $\{V_{m,B}(W)\}_{0\leq m\leq n}$ of intervals as follows: We set $V_0(W):=B$. For $1\leq m\leq n$ we define $V_{m,B}(W)$ to be the unique connected component of $T^{-m}(B)$ satisfying $T^{n-m}(W)\subset V_{m,B}(W)$.
We also define the sequence 
\begin{align*}
    &0=:k_0^{B,W}< k_1^{B,W}<k_2^{B,W}<\cdots<k_{p(B,W)}^{B,W}<k_{p(B,W)+1}^{B,W}:= n+1
\end{align*}
as follows:
We set $k_0^{B,W}:=0$.
If there exists $1\leq k\leq n$ such that $c\in V_{k,B}(W)$ then we define $k_1^{B,W}$ to be the smallest number $1\leq k\leq n$ satisfying $c\in V_{k,B}(W)$. 
If there is no $1\leq k\leq n$ such that $c\in V_{k,B}(W)$ then we set $k_{1}^{B,W}:=n+1$ and $p(B,W):=0$. 
Suppose that $k_s^{B,W}$ has already been defined and that $k_s^{B,W}< n+1$ for some $s\in\mathbb{N}$.  
If there exists $k_s^{B,W}< k\leq n$ such that $c\in V_{k,B}(W)$ then we define $k_{s+1}^{B,W}$ to be the smallest number $k_s^{B,W}< k\leq n$ satisfying $c\in V_{k,B}(W)$. 
If there is no $k_s^{B,W}< k\leq n$ such that $c\in V_{k,B}(W)$ then we set $k_{s+1}^{B,W}:=n+1$ and $p(B,W):=s$. 

Note that for all $1\leq j\leq p(B,W)+1$ with $k_j^{B,W}-k_{j-1}^{B,W}-1>0$ 
\begin{align}\label{eq bijection}
\text{the map }
T^{k_j^{B,W}-k_{j-1}^{B,W}-1}:V_{k_j^{B,W}-1,B}(W)\rightarrow V_{k_{j-1}^{B,W},B}(W) 
\text{ is bijection}.
\end{align}
Moreover, by \eqref{eq finiteness}, we have 
\begin{align}\label{eq finiteness 2}
    p (B,W)\leq M.
\end{align}
For each $n\geq N$, open balls $B,B'$ centered at $c$ with $B\subset B'\subset B_n^1$ and a connected component $W$ of $T^{-n}(B)$ there exists a unique connected component $\text{Inc}_{B,B'}(W)$ of $T^{-n}(B')$ such that $W\subset \text{Inc}_{B,B'}(W)$. 
By \eqref{eq finiteness}, for all $n\geq N$
\begin{align}\label{eq finite to 1}
    \text{the map $W\mapsto \text{Inc}_{B,B'}(W)$  is at most $2^M$-to-$1$.}
\end{align}
For $n\geq N$ we define
\begin{align*}
    &B_n^2:=B(c,e^{-\alpha (n+1)}),\ B_n^{3}:=B(c,e^{-2\alpha n})
    \text{ and }
    B_n^{4}:=B(c,e^{-4\alpha n}).
\end{align*}
For each $i\in \{1,2,3,4\}$ and $n\geq N$ we set
\begin{align*}
\mathcal{C}_n^{i}:=\{W^i \subset [0,1]:W^i \text{ is a connected component of }T^{-n}(B_n^i)\}.
\end{align*} 
For all $0\leq m\leq n$, $i\in \{1,2,3,4\}$ and $W^i\in \mathcal{C}_n^i$ we set $V_{m,i}(W^i):=V_{m,B_n^i}(W^i)$, $p(W^i):=p(B_n^{i}, W^i)$ and $k_{j}^{W^i}:=k_j^{B_n^i,W^i}$ $(0\leq j\leq p(W^i)+1)$.
We first show the following claim:
\begin{claim}\label{claim distorsion}
  There exist $\tilde N\geq N$ and $C_1\geq 1$ such that for all $n\geq  \tilde N$, $W^2\in \mathcal{C}_n^2$, $1\leq j\leq p(W^2)+1$  with $k_j^{W^2}-k_{j-1}^{W^2}-1>0$ and $x,y\in V_{k_j-1,2}(W^2)$ we have 
  \begin{align*}
      \frac{1}{C_1}\leq \frac{|(T^{k_j^{W^2}-k_{j-1}^{W^2}-1})'(x)|}{|(T^{k_j^{W^2}-k_{j-1}^{W^2}-1})'(y)|}\leq C_1.
  \end{align*}
\end{claim}
\emph{Proof of Claim \ref{claim distorsion}.}
For $n\geq N$ and $0\leq j\leq M+3$ we set
\begin{align*}
    B_{n,j}^2:=B(c, e^{- \alpha n -\alpha+j\alpha /(M+3)}).
\end{align*}
Then, by \eqref{eq finiteness}, for each $n\geq N$ there exists $0\leq q(n)\leq M$ such that 
\begin{align}\label{eq decomposition no critical value}
B_{n,q(n)+1}^2\setminus B_{n,q(n)}^2 \cap \bigcup_{k=0}^{n-1} \{T^k(c)\}=\emptyset    
\end{align}
For each $n\in\mathbb{N}$ and a connected component $W$ of $T^{-n}(B_{n,q(n)}^2)$ we set
\begin{align*}
    B_n:=B_{n,q(n)}^2,\ B_n':=B_{n,q(n)+1}^2 \text{ and }
    W':=\text{Inc}_{B_n,B'_n}(W)
\end{align*}
Then for all $n\geq N$ and a connected component $W$ of $T^{-n}(B_{n})$ we have 
\begin{align*}
    &p(W):=p(B_{n},W)=p(B_{n}',W') \text{ and }
k_j^{W}:=k_{j}^{B_{n},W}=k_{j}^{B_{n}',W'}. 
\end{align*}
By the Koebe principle (Lemma \ref{lemma Koebe}) and \eqref{eq bijection}, the claim follows once we show that there exist $\tilde N\geq N$ and $\tau>0$ such that for all $n\geq \tilde N$, a connected component $W$ of $T^{-n}(B_{n})$ and $1\leq j\leq p(W)+1$ 
\begin{align}\label{eq scaled neighborhood}
\text{ 
$V_{k_{j-1}^{W}}(W')$
 contains  
a $\tau$-scaled neighborhood of 
$
V_{k_{j-1}^{W}}(W),
$
}
\end{align}
where $V_{k_{j-1}^{W}}(W'):=V_{k_{j-1}^W, B_n'}(W')$ and $V_{k_{j-1}^{W}}(W):=V_{k_{j-1}^W, B_n}(W)$.
We will show this by induction. Let 
\[
p:=\sup_{n\geq N} \max\{p(W):\text{$W$ is a connected component of $T^{-n}(B_n)$}\}.
\]
Then, by \eqref{eq finiteness 2}, we have $p\leq M$.
For all $n\geq N$ and a connected component $W$ of $T^{-n}(B_{n})$
we have $V_{k_{0}^{W}}(W')=B'_n$
and 
$
V_{k_{0}^{W}}(W)=B_n.
$
In particular, $V_{k_{0}^{W}}(W')$ contains a $(e^{\alpha/(M+3)}-1)$-scaled neighborhood of $V_{k_{0}^{W}}(W)$. We set 
\[
N_1:=N \text{ and }\tau_1:=e^{\alpha/(M+3)}-1.
\]
Let $1\leq s\leq p$. 
Suppose that $N_s\geq N$ and $\tau_s>0$ have already been defined so that, for every $n\ge N_s$, every connected component $W$ of $T^{-n}(B_n)$, and every $1\le j\le s$, the interval $V_{k_{j-1}^{W}}(W')$ contains an $\tau_s$-scaled neighborhood of $V_{k_{j-1}^{W}}(W)$. 
Let $\eta>0$ denote the number obtained in the macroscopic Koebe principle (Lemma \ref{lemma macroscopic Koebe}). By using \eqref{eq finiteness} and applying Lemma \ref{lemma small ball} with $\epsilon=\eta$ and $k=e^{\alpha/(M+3)}$, there exists $N'_s\geq N_s$ such that for all $n\geq N'_s$, a connected component $W$ of $T^{-n}(B_{n})$ and $0\leq m\leq n$ we have 
\begin{align}\label{eq small preimage}
|V_{m,B_n'}(W')|<\eta.    
\end{align}
Equations \eqref{eq bijection} and \eqref{eq small preimage}, together with the induction hypothesis, allow us to apply the macroscopic Koebe principle (Lemma \ref{lemma macroscopic Koebe}). Namely, let $K'(\tau_s)$ be the constant obtained in Lemma \ref{lemma macroscopic Koebe}.
Then 
\begin{align}\label{eq induction}
\text{ $V_{k_{s}^W-1}(W')$ contains a $K'(\tau_s)$-scaled neighborhood of $V_{k_{s}^W-1}(W)$.}    
\end{align}
Since $c\in V_{k_{s}^W}(W)$ by the construction, we have 
\begin{align*}
&T^{-1}(V_{k_{s}^W-1}(W))=T^{-1}((x,T(c)])=V_{k_{s}^W}(W)
\text{ and }
\\&
T^{-1}(V_{k_{s}^W-1}(W'))=T^{-1}((y,T(c)])=V_{k_{s}^W}(W'), 
\end{align*}
where $V_{k_{s}^W-1}(W)=(x,z)$ and $V_{k_{s}^W-1}(W')=(y,z')$. 
Note that by \eqref{eq induction}, we have $x-y\geq K'(\tau_s)x$. 
We denote by $K_c'(K'(\tau_s))$ the constants obtained in Lemma \ref{lemma well inside near critical point}. 
By Lemma \ref{lemma well inside near critical point},  for all $n\geq N'_s$ and a connected component $W$ of $T^{-n}(B_{n})$, $V_{k_{s}^W}(W')$ contains $K'_c(K'(\tau_s))$-scaled neighborhood of $V_{k_{s}^W}(W)$. We set $N_{s+1}:=N'_s$ and $\tau _{s+1}:=\min\{K'_c(K'(\tau_s)),\tau_s\}$.  

Hence, by induction, we obtain $N_{p+1}$ and $\tau_{p+1}$ such that for all $n\ge N_{p+1}$, every connected component $W$ of $T^{-n}(B_n)$, and every $1\le j\le p+1$, the interval $V_{k_{j-1}^{W}}(W')$ contains a $\tau_{p+1}$-scaled neighborhood of $V_{k_{j-1}^{W}}(W)$.
By setting $\tilde N:=N_{p+1}$ and $\tau:=\tau_{p+1}$, the proof of \eqref{eq scaled neighborhood} is complete. Hence, the proof of claim \ref{claim distorsion} is also complete. 
\qed

Next, we will show the following claim:
For all $n\geq N$, $W^4\in \mathcal{C}_n^{4}$ and $i\in\{1,2,3\}$ we set $\text{Inc}_i(W^4):=\text{Inc}_{B_n^4,B_n^{i}}(W^4)$.
\begin{claim}\label{claim exponentially small}
    There exist $N_1\geq N$ and $C_3>0$ such that for all $n\geq N_1$ and $W^4\in \mathcal{C}_n^{4}$ we have 
    \begin{align*}
        \frac{|W^4|}{|\text{Inc}_{3}(W^4)|}\leq C_3 e^{-\frac{\alpha n}{2(M+1)l^{M}}}, 
    \end{align*}
    where $l$ denotes the order of the critical point $c$.
\end{claim}
\emph{Proof of Claim \ref{claim exponentially small}.}
We denote by $\eta$ the constant obtained in Lemma \ref{lemma ratio}. By Lemma \ref{lemma small ball}, there exists $N_0\geq \tilde N$ such that for all $n\geq N_0$, $\tilde W^2 \in \mathcal{C}_n^{2}$ and $0\leq m\leq n$ we have 
\begin{align}\label{eq small preimage claim 2}
|V_{m,B_n^2}(\tilde W^2)|<\eta,    
\end{align}  
where $\tilde N$ denotes the number obtained in Claim \ref{claim distorsion}.
For $n\geq N$ and $0\leq j\leq 4(M+1)$ we set
\begin{align*}
    B_{n,j}^4:=B(c, e^{-4\alpha n+j2\alpha n/(4(M+1))}).
\end{align*}
Then, by \eqref{eq finiteness}, for each $n\geq N$ there exists $0\leq q(n)\leq 4(M+1)-1$ such that 
\begin{align}\label{eq decomposition no critical value claim 2}
\overline{B_{n,q(n)+1}^4\setminus B_{n,q(n)}^4} \cap \bigcup_{k=0}^{n-1} \{T^k(c)\}=\emptyset,    
\end{align}
where $\overline{A}$ denotes the Euclidean closure of $A\subset [0,1]$.
For each $n\in\mathbb{N}$, $W^4\in \mathcal{C}_n^4$, $1\leq m\leq n$ and $0\leq j\leq p(\text{Inc}_{B_n^4,B_n^2}(W^4))+1$ we set
\begin{align*}
    &B_n:=B_{n,q(n)}^4,\ B_n':=B_{n,q(n)+1}^4,\ 
    W:=\text{Inc}_{B_n^4,B_n}(W^4),
    W':=\text{Inc}_{B_n^4,B_n'}(W^4),\\&  
    W^2:=\text{Inc}_{B_n^4,B_n^2}(W^4),\ 
    V_{m}(W):=V_{m,B_n}(W)
    ,\ 
    V_{m}(W'):=V_{m,B_n'}(W')
  ,\ 
    k_{j}:=k_{j}^{W^2}
    .
\end{align*}
Note that, since $W^4\subset W\subset W'\subset \text{Inc}_3(W^4)$, $W=V_{n}(W)$, $W'=V_{n}(W')$ and $k_{p(W^2)+1}=n+1$,
for each $n\in\mathbb{N}$ and $W^4\in \mathcal{C}_n^4$, we have
\begin{align}\label{eq claim 2 small change}
    \frac{|W^4|}{|\text{Inc}_3(W^4)|}\leq \frac{|W|}{|W'|}
    =\frac{|V_{k_{p(W^2)+1}-1}(W)|}{|V_{k_{p(W^2)+1}-1}(W')|}
\end{align}
Let $C_1$ be the constant obtained in Claim \ref{claim distorsion} and let $C_2$ be the constant obtained in Lemma \ref{lemma ratio}.
We fix $n\geq N_0$ and $W^4\in \mathcal{C}_n^4$.
Note that by \eqref{eq decomposition no critical value claim 2}, we have
\begin{align}\label{eq claim ratio boundary}
    T^n(\partial W) \subset \partial B_n \text{ and }T^n(\partial W') \subset \partial B_n'.
\end{align}

By Claim \ref{claim distorsion}, \eqref{eq bijection} and the mean value theorem, we have 
\begin{align}\label{eq claim ratio step 1.1}
    \frac{|V_{k_{p(W^2)+1}-1}(W)|}{|V_{k_{p(W^2)+1}-1}(W')|}\leq C_1^2
    \frac{|V_{k_{p(W^2)}}(W)|}{|V_{k_{p(W^2)}}(W')|}
\end{align}
We write 
\[
V_{k_{p(W^2)}}(W):=(x_{p(W^2),1},x_{p(W^2),2})\text{ and }
V_{k_{p(W^2)}}(W'):=(y_{p(W^2),1},y_{p(W^2),2}).
\]
Recall that, by the construction, 
\[
c\in V_{k_{p(W^2)},2}(W^2) \text{ and } T(V_{k_{p(W^2)}}(W))\subset T(V_{k_{p(W^2)}}(W'))\subset V_{k_{p(W^2)}-1,2}(W^2). 
\]
In particular, by \eqref{eq small preimage claim 2}, we have $\max\{T(c)-T(y_{p(W^2),i}):i\in \{1,2\}\}<\eta$.
By \eqref{eq decomposition no critical value claim 2}, we have $c\notin V_{k_{p(W^2)}}(W')\setminus V_{k_{p(W^2)}}(W)$. 
     If we have $c\in V_{k_{p(W^2)}}(W)$ then by \eqref{eq claim ratio boundary}, we have 
     \begin{align}\label{eq claim ratio symetric 1}
     T(x_{p(W^2),1})=T(x_{p(W^2),2}) \text{ and } T(y_{p(W^2),1})=T(y_{p(W^2),2}). 
     \end{align}
     We set 
    \begin{align*}
    &\tilde V_{k_{p(W^2)}-1}(W):=(T(x_{p(W^2),1}),T(c)] \subset V_{k_{p(W^2)}-1}(W) \text{ and }
\\&
\tilde V_{k_{p(W^2)}-1}(W'):=(T(y_{p(W^2),1}),T(c)]  \subset V_{k_{p(W^2)}-1}(W').
    \end{align*}
In this case, by \eqref{eq claim ratio boundary} and \eqref{eq decomposition no critical value claim 2}, we have $T^{k_{p(W^2)}}(x_{p(W^2),1})=T^{k_{p(W^2)}}(x_{p(W^2),2})\in \partial B_n$, $T^{k_{p(W^2)}}(y_{p(W^2),1})=T^{k_{p(W^2)}}(y_{p(W^2),2})\in \partial B_n'$ and $T^{k_{p(W^2)}}(c)\in B_n$.

 If we have $c\notin V_{k_{p(W^2)}}(W)$ then we set 
    \[
    \tilde V_{k_{p(W^2)}-1}(W):=V_{k_{p(W^2)}-1}(W) \text{ and }
\tilde V_{k_{p(W^2)}-1}(W'):=V_{k_{p(W^2)}-1}(W').
    \]
In both cases, by \eqref{eq bijection}, we have 
\begin{align}\label{eq claim ratio how many times 1}
&\tilde V_{k_{p(W^2)}-1}(W)= T^{n-(k_{p(W^2)}-1)}(W)\text{ and }
\\&\nonumber
\tilde V_{k_{p(W^2)}-1}(W')=T^{n-(k_{p(W^2)}-1)}(W').    
\end{align}
By Lemma \ref{lemma ratio} and \eqref{eq claim ratio step 1.1}, we obtain 
    \begin{align}\label{eq claim ratio step 1.2}
    \frac{|V_{k_{p(W^2)+1}-1}(W)|}{|V_{k_{p(W^2)+1}-1}(W')|}\leq C_1^2
    \frac{|V_{k_{p(W^2)}}(W)|}{|V_{k_{p(W^2)}}(W')|} \leq 
    C_1^2C_2\left(\frac{|\tilde V_{k_{p(W^2)}-1}(W)|}{|\tilde V_{k_{p(W^2)}-1}(W')|}\right)^{1/l}.
\end{align}

Since we have $\tilde V_{k_{p(W^2)}-1}(W)\subset V_{k_{p(W^2)}-1}(W') \subset V_{k_{p(W^2)}-1,2}(W^2)$, Claim \ref{claim distorsion}, \eqref{eq bijection} and the mean value theorem imply that we have 
\begin{align}\label{eq claim ratio step 2.1}
    &\frac{|\tilde V_{k_{p(W^2)}-1}(W)|}{|\tilde V_{k_{p(W^2)}-1}(W')|}\leq
    C_1^2\frac{|\tilde V_{k_{p(W^2)-1}}(W)|}{|\tilde V_{k_{p(W^2)-1}}(W')|}, \text{ where}
    \\
&\tilde V_{k_{p(W^2)-1}}(W):=T^{k_{p(W^2)}-k_{p(W^2)-1}-1}(\tilde V_{k_{p(W^2)}-1}(W)) \text{ and }\nonumber
\\&    \tilde V_{k_{p(W^2)-1}}(W'):=T^{k_{p(W^2)}-k_{p(W^2)-1}-1}(\tilde V_{k_{p(W^2)}-1}(W'))\nonumber
\end{align}
Let $x_{p(W^2)-1,1},x_{p(W^2)-1,2}\in\partial \tilde V_{k_{p(W^2)-1}}(W)$ satisfy $x_{p(W^2)-1,1}<x_{p(W^2)-1,2}$ and let $y_{p(W^2)-1,1},y_{p(W^2)-1,2}\in\partial \tilde V_{k_{p(W^2)-1}}(W')$ satisfy $y_{p(W^2)-1,1}<y_{p(W^2)-1,2}$.
Recall that $c\in V_{k_{p(W^2)-1},2}(W^2)$.
By \eqref{eq small preimage claim 2} and \eqref{eq decomposition no critical value claim 2}, 
we have $\max\{T(c)-T(y_{p(W^2)-1,i}):i\in \{1,2\}\}<\eta$ and $c\notin V_{k_{p(W^2)-1}}(W')\setminus V_{k_{p(W^2)-1}}(W)$. 
 If we have $c\in \tilde V_{k_{p(W^2)-1}}(W)$ then
   by \eqref{eq claim ratio boundary}, we have 
     \begin{align}\label{eq claim ratio symetric 2}
     T(x_{p(W^2)-1,1})=T(x_{p(W^2)-1,2}) \text{ and } T(y_{p(W^2)-1,1})=T(y_{p(W^2)-1,2}). 
     \end{align}
 We set 
    \begin{align*}
    &\tilde V_{k_{p(W^2)-1}-1}(W):=(T(x_{p(W^2)-1,1}),T(c)] \subset V_{k_{p(W^2)-1}-1}(W) \text{ and }
\\&
\tilde V_{k_{p(W^2)-1}-1}(W'):=(T(y_{p(W^2)-1,1}),T(c)]  \subset V_{k_{p(W^2)-1}-1}(W').
    \end{align*}
By \eqref{eq claim ratio boundary} and \eqref{eq decomposition no critical value claim 2}, $T^{k_{p(W^2)-1}}(x_{p(W^2)-1,1})\in \partial B_n$, $T^{k_{p(W^2)-1}}(y_{p(W^2)-1,1})\in \partial B_n'$ and $T^{k_{p(W^2)-1}}(c)\in B_n$.

    If we have $c\notin \tilde V_{k_{p(W^2)-1}}(W)$ then we set 
    \[
    \tilde V_{k_{p(W^2)-1}-1}(W):=T(\tilde V_{k_{p(W^2)-1}}(W)) \text{ and }
\tilde V_{k_{p(W^2)-1}-1}(W'):=T(\tilde V_{k_{p(W^2)-1}}(W')).
    \]
In both cases, by \eqref{eq bijection} and \eqref{eq claim ratio how many times 1}, we have 
\begin{align}\label{eq claim ratio how many times 2}
&\tilde V_{k_{p(W^2)-1}-1}(W)= T^{n-(k_{p(W^2)-1}-1)}(W)\text{ and }
\\&\nonumber
\tilde V_{k_{p(W^2)-1}-1}(W')=T^{n-(k_{p(W^2)-1}-1)}(W').    
\end{align}
By Lemma \ref{lemma ratio} and \eqref{eq claim ratio step 2.1}, we obtain 
    \begin{align*}
\frac{|\tilde V_{k_{p(W^2)}-1}(W)|}{|\tilde V_{k_{p(W^2)}-1}(W')|}\leq
    C_1^2
    \frac{|\tilde V_{k_{p(W^2)-1}}(W)|}{|\tilde V_{k_{p(W^2)-1}}(W')|} \leq 
    C_1^2C_2\left(\frac{|\tilde V_{k_{p(W^2)-1}-1}(W)|}{|\tilde V_{k_{p(W^2)-1}-1}(W')|}\right)^{1/l}.
\end{align*}
Combining this with \eqref{eq claim 2 small change} and \eqref{eq claim ratio step 1.2}, we obtain
\begin{align*}
    \frac{|W^4|}{\text{Inc}_3(W^4)}    
    \leq C_1^{4}C_2^2
    \left(\frac{|\tilde V_{k_{p(W^2)-1}-1}(W)|}{|\tilde V_{k_{p(W^2)-1}-1}(W')|}\right)^{l^{-2}}
\end{align*}
By repeating this argument and using \eqref{eq finiteness 2}, we obtain
\begin{align}\label{eq claim 2 final}
    \frac{|W^4|}{\text{Inc}_3(W^4)} \leq C_1^{2(M+1)}C_2^M\left(\frac{|T^n(W)|}{|T^n(W')|}\right)^{l^{-M}}
\end{align}
Moreover, either $T^n(W)=B_n$ and $T^n(W')=B_n'$ or there exists $x\in B_n \cap \bigcup_{k=1}^{n-1}\{T^k(c)\}$ such that
either
\begin{align*}
&
 \{x,c+e^{-4\alpha n+\frac{q(n)2\alpha n}{4(M+1)}}\}=\partial T^n(W) \text{ and } 
 \{x,c+e^{-4\alpha n+\frac{(q(n)+1)2\alpha n}{4(M+1)}}\}=\partial T^n(W') \text{ or }
\\&
\{x,c-e^{-4\alpha n+\frac{q(n)2\alpha n}{4(M+1)}}\}=\partial T^n(W) \text{ and } 
 \{x,c-e^{-4\alpha n+\frac{(q(n)+1)2\alpha n}{4(M+1)}}\}=\partial T^n(W').
\end{align*}
Let $N_1\geq N_0$ be a large natural number satisfying $e^{-\alpha n/(2(M+1))}\leq 2^{-1}$. 
Then in all cases and for all $n\ge N_1$, we have
\[
\frac{|T^n(W)|}{|T^n(W')|}\leq 4e^{-\frac{\alpha n}{2(M+1)}}.
\]
Combining this with \eqref{eq claim 2 final}, we obtain the desired result.
\qed

We are now ready to complete the proof. Let $N_1$ be the natural number obtained in Claim \ref{claim exponentially small} and let $C_3>0$ be the constant obtained in Claim \ref{claim exponentially small}.  
Note that $W^3\cap \tilde W^3=\emptyset$ for all $W^3,\tilde W^3\in\mathcal{C}_n^3$ with $W^3\neq \tilde W^3$.
Thus, by \eqref{eq finite to 1},
we obtain
\[
\sum_{W^4\in \mathcal{C}_n^4}|\text{Inc}_3(W^4)|\leq 2^M\sum_{W^3\in \mathcal{C}_n^3}|W^3|\leq 2^M|[0,1]|=2^M.
\]
Combining this with Claim \ref{claim exponentially small}, we obtain
\begin{align}\label{eq slow main}
    &\sum_{n=1}^\infty  \Leb\left(\bigcup_{W^4\in \mathcal{C}_n^4}W^4\right)
    =\sum_{n=1}^\infty\sum_{W^4\in \mathcal{C}_n^4}|W^4|
    \\&\leq\sum_{n=1}^{N_1-1}\sum_{W^4\in \mathcal{C}_n^4}|W^4|
    +C_3\sum_{n=N_1}^\infty \sum_{W^4\in \mathcal{C}_n^4}|\text{Inc}_3(W^4)|e^{-\frac{\alpha n}{2(M+1)l^M}}\nonumber
    \\&\leq \sum_{n=1}^{N_1-1}\sum_{W^4\in \mathcal{C}_n^4}|W^4|
    +C_32^M \sum_{n=N_1}^\infty
    e^{-\frac{\alpha n}{2(M+1)l^M}}<\infty\nonumber
    .
\end{align}
For $k\in \mathbb{N}$ we set $\alpha_k=1/k$. By \eqref{eq slow main} and the Borel–Cantelli lemma, for each $k\in\mathbb{N}$ there exists a Bore set $A_k\subset [0,1]$ such that $\Leb(A_k)=1$ and for all $x\in A_k$ we have 
$
x\in \bigcup_{n\in\mathbb{N}} \bigcap_{j=n}^\infty \left(\bigcup_{W^4\in \mathcal{C}_j^4}W^4\right)^c. 
$
Hence, by setting $A:=\bigcap _{k\in \mathbb{N}} A_k$, we are done. 
\end{proof}

\section{Known results on $S$-unimodal maps}\label{sec known results}
In this section, we collect known results on $S$-unimodal maps that will be used to prove Theorem \ref{thm main}.

We begin with the following lemma due to Nowicki and Sands \cite{NowickiSands1998}.

\begin{lemma}{\cite[Lemma 4.8]{NowickiSands1998}}\label{lemma Nowicki and Sands}
    Let $T$ be an $S$-unimodal map with a non-flat critical point. Assume that $T$ has no periodic attractors. Then for every $0\leq\rho<1$ there exists a constant $K>0$ such that for each $x\in [0, T(c)]$ and $n\in\mathbb{N}$ satisfying $T^i(x)\leq x$ for all $0\leq i\leq n$, we have 
    \begin{align}\label{eq nowickisands}
        |(T^n)'(x)|\geq K\rho^n.
    \end{align}
\end{lemma}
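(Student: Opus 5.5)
The plan is to use the hypothesis $T^i(x)\le x$ as a \emph{forbidden region} condition and then to run a Mañé-type hyperbolicity argument with constants kept uniform in $x$. If $x<T(c)$, set $U_x:=T^{-1}((x,T(c)])$. This is an open interval around $c$, and $T^{i+1}(x)\le x$ is equivalent to $T^i(x)\notin U_x$. So the orbit segment $x,\dots,T^{n-1}(x)$ avoids $U_x$. By \eqref{eq critical order}, $|U_x|$ is comparable to $(T(c)-x)^{1/l}$. (If $x=T(c)$, the condition says $T^i(c)\le T(c)$, which always holds, so this case has to be absorbed by a limiting or continuity argument.) The difficulty is that Mañé's theorem only gives $|(T^n)'(y)|\ge K(U)\lambda(U)^n$ for orbits avoiding a fixed neighbourhood $U$ of $c$. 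Here $U_x$ shrinks as $x\to T(c)$, so the constants degenerate. The point of allowing any $\rho<1$ rather than $\rho>1$ is that we only need to control a subexponential loss.

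\textbf{Case 1: $T(c)-x\ge\delta_0$.} Fix a small $\delta_0>0$ depending on $\rho$. All the $U_x$ then contain a fixed neighbourhood $U_0$ of $c$. Since $T$ has no periodic attractors, Mañé's theorem for $S$-unimodal maps (equivalently, Lemma \ref{lemma Koebe} together with the absence of attracting and neutral cycles; see \cite[Lemma A.8]{NowickiSands1998}) gives $|(T^n)'(x)|\ge K_0\lambda_0^n\ge K_0\rho^n$. Here I would take the constant $K_0$ to depend only on $\delta_0$.

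\textbf{Case 2: $x$ close to $T(c)$.} Decompose the orbit at the times $i_1<i_2<\dots$ at which $T^{i}(x)$ enters a fixed small neighbourhood $U_0$ of $c$. Between consecutive such times the orbit avoids $U_0$, so Case 1's estimate applies to those blocks. At an entry time $i_j$ we have $d_j:=|T^{i_j}(x)-c|\ge |U_x|/2$, and the one-step loss is $|T'(T^{i_j}(x))|\asymp d_j^{\,l-1}$. To pay for this loss I would use a binding argument. The point $T^{i_j+1}(x)$ lies within about $d_j^{\,l}$ of $T(c)$ and shadows the critical orbit for a binding period $p_j$. Over that period the hypothesis $T^{i}(x)\le x$ is inherited approximately by $T^{i}(c)$. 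This lets me produce, via Lemma \ref{lemma macroscopic Koebe} and Lemma \ref{lemma Koebe}, a monotone branch of $T^{p_j}$ around $T^{i_j+1}(x)$ whose image has definite size and which carries a scaled neighbourhood. Koebe distortion then gives $|(T^{p_j})'(T^{i_j+1}(x))|\gtrsim |T^{p_j}(J)|/|J|\gtrsim d_j^{-l}$, so the derivative gained over the binding period compensates the loss $d_j^{\,l-1}$ up to a uniform constant.

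\textbf{Main obstacle.} The hardest step is keeping the accumulated multiplicative constants from the successive binding periods below $(\rho^{-1})^{n}$. I expect to control the number of close returns using Lemma \ref{lemmma Ruelle's rule}. A return to within $\epsilon$ of $c$ forces a time gap of at least $-C\log\epsilon$, so the number of returns of depth $\epsilon$ in time $n$ is at most $n/(-C\log\epsilon)$. Choosing $U_0$ small enough, the number of binding periods is then at most $\theta n$ with $\theta$ as small as we like. This should give $|(T^n)'(x)|\ge K\,(K_1^{-1})^{\theta n}\lambda_0^{(1-\theta)n}\ge K\rho^n$. If the binding step fails to give uniform Koebe space, I would first try to obtain the space directly from the maximal monotone extension of $T^{n}$ around $x$. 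Its image cannot cross $x$ without creating a critical point inside $(x,T(c)]$, and that should give a scaled neighbourhood.
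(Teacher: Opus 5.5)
The paper does not prove this lemma; it quotes it from Nowicki--Sands \cite[Lemma 4.8]{NowickiSands1998}, so your sketch has to stand on its own. Several of your reductions are sound:
\begin{itemize}
\item Case~1 is a correct use of Ma\~n\'e's theorem. With no periodic attractors, every periodic orbit is hyperbolic repelling by \cite[Lemma A.8]{NowickiSands1998}.
\item The case $x=T(c)$ does follow by continuity from bounds that are uniform in $x<T(c)$. For fixed $n$ one has $\max_{1\le i\le n}T^i(T(c))<T(c)$, because $c$ is not periodic.
\item Lemma \ref{lemmma Ruelle's rule} correctly bounds the number of entries into $U_0=B(c,\epsilon_0)$ before time $n$ by roughly $n/(C\log(1/\epsilon_0))$.
\end{itemize}

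The gap is the binding step of Case~2, which carries the whole content of the lemma. As you state it, it is false. Take $T$ infinitely renormalizable, and let $I_k\ni c$ be the interval of the $k$-th renormalization cycle containing $c$. The lengths $\epsilon_k$ of the intervals of that cycle tend to $0$ (no wandering intervals). Suppose an entry lies deep inside $I_k$. Then $y:=T^{i_j+1}(x)$ and $T(c)$ both lie in $T(I_k)$, at distance $\asymp d_j^{\,l}$ from each other, and every forward image of that part of $T(I_k)$ stays inside a single interval of the cycle. Any neighbourhood of $y$ of length $\asymp d_j^{\,l}$ meets $T(I_k)$ in an interval of comparable length. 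So bounded distortion on such a neighbourhood only gives $|(T^{p})'(y)|\lesssim \epsilon_k\, d_j^{-l}=o(d_j^{-l})$. There is no branch of length $\asymp d_j^{\,l}$ around $y$ that has both bounded distortion and an image of definite size. Lemma \ref{lemma macroscopic Koebe} cannot supply one, since it only pulls Koebe space back and never enlarges images.

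Your accounting actually needs something weaker: that the gain along the shadowing makes up the loss $d_j^{\,l-1}$ up to a constant $K_1$ independent of $d_j$. Nothing in the sketch proves this, and it cannot come from the hypothesis being ``inherited'' by the critical orbit. The condition $T^i(x)\le x$ says nothing new about $T^i(T(c))$, since $T^i(T(c))\le T(c)$ always holds. At $x=T(c)$ the lemma says exactly that $\liminf_n\frac1n\log|(T^n)'(T(c))|\ge 0$, which is how the paper uses it in Section~\ref{sec proof of theorem quadratic}. A lower bound for the derivative over a binding period that follows the critical orbit is therefore the statement being proved. Unless you set the argument up as a genuine induction, it is circular.

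The bookkeeping also has two problems:
\begin{itemize}
\item During a binding period the orbit re-enters $U_0$ at the return depths of the critical orbit itself (in the renormalizable example, at all coarser levels). So bindings are nested rather than separated by free Ma\~n\'e blocks.
\item Your final product omits the Ma\~n\'e constants $K_0(U_0)$, one for each of the up to $\theta n$ free blocks, and these degenerate as $U_0$ shrinks. You would need the version of Ma\~n\'e's theorem whose constant is uniform for blocks ending in a fixed neighbourhood of $c$.
\end{itemize}

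What is missing is a real mechanism guaranteeing recovery after close returns that is uniform, or at least subexponential in aggregate, without any expansion assumption on the critical orbit. That is exactly what the cited lemma provides, and your sketch assumes it rather than proving it.
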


Let $T$ be an $S$-unimodal map with a non-flat critical point. 
For $x\in [0,1]\setminus\{c\}$ there exists the unique point $\hat x\in [0,1]$ such that $x\neq \hat x$ and $T(x)=T(\hat x)$.
For each $n\in\mathbb{N}$ we define
\begin{align*}
    \mathcal{H}^n:=\{x\in (0,1): T^i(x)\notin(x,\hat x) \text{ for all $1\leq i\leq n-1$ and }T^n(x)\in (x,\hat x) \}
    .
\end{align*}
We also define 
\[
\lya_\pi(T):=\inf\left\{\frac{1}{n}\log |(T^n)'(y)|: y\in [0,1] \text{ with $y=T^n(y)$ for some }n\in\mathbb{N}\right\}.
\]
Let 
\[
M_T:=\sup\left\{\frac{|T'(x)|}{|T'(\hat x)|}: x\in (0,1)\setminus\{c\}\right\}.
\]
\begin{rem}
    Let $T$ be an $S$-unimodal map. We assume that the order of critical point $c$ of $T$ is $l$. Then we have
\begin{align}\label{eq finite symetric}
    M_T<\infty.
\end{align}
Indeed, by \eqref{eq critical order}, for any $x\in [0,1]\setminus\{c\}$ we have 
\begin{align}\label{eq elementary}
    &|T'(x)|
    \geq 
    \frac{|x-c|^{l}}{W|x-c|}
    \geq
    \frac{|T(x)-T(c)|}{W^2|x-c|}
    \\&=\frac{|T(\hat x)-T(c)|}{W^2|x-c|}
    \geq \frac{|\hat x-c|^l}{W^3|x-c|}
    \geq\frac{|T'(\hat x)|}{W^4}, \nonumber
\end{align}
where $W$ denotes the constant in \eqref{eq critical order}.
\end{rem}
By \cite[(3.17)]{Keller1990}, we have 
\begin{align}\label{eq keller}
    \log |(T^n)'(x)|\geq -\log M_T+n\lya_\pi(T) \text{ for all $n\in\mathbb{N}$ and $x\in\mathcal{H}^n$}.
\end{align}
We set
\begin{align*}
&\mathcal{R}:=\left\{x\in [0,1]\setminus
\bigcup_{n=0}^\infty \{T^{-n}(c)\}
:
\liminf_{n\to\infty}|T^{n}(x)-c|=0\right\}.
\end{align*}
Let $x\in \mathcal{R}$. Then the following construction of the sequence $\{n_k(x)\}_{k\in \mathbb{N}\cup\{0\}}$ is well defined:
\begin{itemize}
    \item[(S0)] We set $n_0(x):=0$
    \item[(S1)] We define $n_1(x)$ to be the smallest integer satisfying $T^{n_1(x)}(x)\in(x,\hat{x})$. 
    \item[(S2)] Suppose that $n_k(x)$ has already been defined for some $k\in\mathbb{N}$. We define $n_{k+1}(x)$ to be the smallest integer satisfying 
    \[T^{n_{k+1}(x)}(x)\in (T^{n_k(x)}(x), \hat{T^{n_k(x)}(x)}).\] 
\end{itemize}
Note that, by the construction, for all $k\in \mathbb{N}\cup\{0\}$ we have 
\begin{align}\label{eq H decomposition}
    T^{n_{k}(x)}(x)\in \mathcal{H}^{n_{k+1}(x)-n_{k}(x)}
\end{align}

\begin{rem}
Let $T$ be an $S$-unimodal map.
The assumption that $T''(c)\neq 0$ in Keller \cite[Theorem 3 (b)]{Keller1990} is only used to prove \cite[Lemma 4]{Keller1990}. 
Moreover, within the proof of \cite[Lemma 4]{Keller1990}, this assumption is only used to prove that $M_T<\infty$. 
Thus, by \eqref{eq finite symetric}, we can replace the assumption that $T''(c)\neq 0$ by the weaker assumption that the critical point of $T$ is non-flat.
\end{rem}

Keller showed the following lemma in the proof of \cite[Lemma 4]{Keller1990} (see \cite[(3.19)]{Keller1990}):
\begin{lemma}\label{lemma keller}
    Let $T$ be an $S$-unimodal map with a non-flat critical point. Assume that $T$ has no periodic attractors. Then there exists a Borel measurable set $G(T)\subset\mathcal{R}$ such that $\Leb(G(T))=1$ and for all $x\in G(T)$ we have
    \begin{align}\label{eq stap big}
        \lim_{k\to\infty}(n_{k+1}(x)-n_k(x))=\infty.
    \end{align}
\end{lemma}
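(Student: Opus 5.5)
The statement to prove is Lemma~\ref{lemma keller}: for $\Leb$-a.e.\ $x$ the return gaps $n_{k+1}(x)-n_k(x)$ tend to infinity. My plan is to reduce it to a Borel--Cantelli estimate combined with Lemma~\ref{lemmma Ruelle's rule}.

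First I will fix the full-measure set. Since $T$ has no periodic attractors, known results give that $\Leb$-a.e.\ $x$ is not eventually mapped onto $c$ and has $c\in\omega(x)$. So $\Leb(\mathcal{R})=1$, and the sequence $\{n_k(x)\}$ is defined for $x\in\mathcal R$. The set $\mathcal R$ and the functions $n_k$ are Borel, so $G(T)$ will be Borel.

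Next I will use the nested structure of the construction. The points $z_k:=T^{n_k(x)}(x)$ satisfy $|z_{k+1}-c|<|z_k-c|$, and $z_k\to c$ because $x\in\mathcal R$. Suppose $m:=n_{k+1}(x)-n_k(x)$ is bounded along a subsequence, say $m\le L$. Then $T^{m}(z_k)\in(z_k,\hat z_k)$, so $T^{m}$ maps a point of $B(c,\epsilon_k)$, with $\epsilon_k:=|z_k-c|$, back into $B(c,\epsilon_k)$ (up to the comparability $|\hat z-c|\asymp|z-c|$ from \eqref{eq critical order}). Lemma~\ref{lemmma Ruelle's rule} then gives $L\ge m\ge -C\log(C'\epsilon_k)$. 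This is impossible once $\epsilon_k$ is small. Hence $n_{k+1}(x)-n_k(x)\to\infty$ for every $x\in\mathcal R$, and I can take $G(T)=\mathcal R$.

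The only real obstacle is the first step, $\Leb(\mathcal R)=1$, that is, that typical orbits accumulate on $c$. Without periodic attractors, I will get this from the standard fact that $\omega(x)$ contains $c$ for a.e.\ $x$ when there are no wandering intervals and no periodic attractors. I will cite de Melo--van Strien for this, or Mañé's theorem: a compact invariant set at positive distance from $c$ and from attracting cycles is hyperbolic and has zero Lebesgue measure. If citing this is unavailable, I will instead argue through Keller's original proof, (3.19).
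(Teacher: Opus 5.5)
Your argument is correct, but it does not follow the paper's route. The paper gives no proof; it imports the statement from Keller's proof of his Lemma 4 (his (3.19)) \cite{Keller1990}. You instead show that the conclusion holds for \emph{every} $x\in\mathcal{R}$, so $G(T)=\mathcal{R}$ works.

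The only measure-theoretic input in your approach is $\Leb(\mathcal{R})=1$. This is the standard fact that $c\in\omega(x)$ for a.e.\ $x$ when there is no periodic attractor (via Ma\~n\'e's theorem, or the description of metric attractors in \cite[Chapter V]{de2012one}). Borel measurability of $\mathcal{R}$ is immediate.

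Your use of Lemma~\ref{lemmma Ruelle's rule} is legitimate if you take $\epsilon$ slightly larger than $\max\{|z_k-c|,|\hat z_k-c|\}$, since both $z_k$ and $T^m(z_k)=z_{k+1}$ lie in that ball. It also gives you a quantitative bound, $n_{k+1}(x)-n_k(x)\geq -C\log(C'|T^{n_k(x)}(x)-c|)$. If you only want the qualitative statement, that lemma is not even needed. Suppose the gaps were bounded along a subsequence. Then some value $m$ would occur infinitely often, and $T^m(z_k)=z_{k+1}$ with $z_k,z_{k+1}\to c$ gives $T^m(c)=c$ by continuity. That is a superattracting cycle, contradicting the hypothesis.

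One small correction. The claim $|z_{k+1}-c|<|z_k-c|$ can fail when $T$ is not symmetric about $c$. What is monotone is $T(z_k)$: the intervals $(z_k,\hat z_k)=\{y:T(y)>T(z_k)\}$ are strictly nested. The convergence $z_k\to c$ still holds, but it needs one line of justification:
\begin{itemize}
\item Suppose $T(z_k)\uparrow v<T(c)$.
\item Then every $(z_k,\hat z_k)$ contains the nondegenerate open interval $\{y:T(y)>v\}$, which contains $c$.
\item The orbit of $x\in\mathcal{R}$ enters this interval at some time $n^*$.
\item By the minimality in (S2), this forces $n_{k+1}(x)\leq n^*$ for all $k$, contradicting that $n_k(x)$ is strictly increasing.
\end{itemize}
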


Keller \cite[Theorem 3 (b)]{Keller1990} also showed the following theorem (see also \cite[Chapter V, Theorem 3.2]{de2012one}):

\begin{thm}\label{thm keller}{\cite{Keller1990}}
    Let $T$ be an $S$-unimodal map with a non-flat critical point. Then there exists a constant $\ulya_T\in\mathbb{R}$ such that for $\Leb$-almost every point $x\in [0,1]$ we have 
    $\ulya_T=\ulya(x)$.
    Moreover, $T$ has an absolutely continuous $T$-invariant probability measure with a positive entropy if and only if $\ulya_T>0$.
    Furthermore, $T$ has a strictly stable periodic orbit if and only if $\ulya_T<0$.
\end{thm}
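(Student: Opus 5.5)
The plan is to split according to whether $T$ has a periodic attractor, and to get constancy of $\ulya$ from invariance plus ergodicity. For every $x$ that is not a preimage of $c$ we have $\log|(T^{n+1})'(x)|=\log|T'(x)|+\log|(T^n)'(T(x))|$ with $\log|T'(x)|$ finite, so $\ulya(T(x))=\ulya(x)$. Thus $\ulya$ is a $T$-invariant Borel function off the countable set $\bigcup_n T^{-n}(c)$.

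\emph{Case of a periodic attractor.} I would invoke the structure theorem for $S$-unimodal maps (Guckenheimer, Mañé; see \cite{de2012one}). With negative Schwarzian there is at most one periodic attractor, and its basin $B(O(p))$ has full Lebesgue measure. For $x$ in the basin, $T^n(x)\to O(p)$ and the orbit eventually stays near $O(p)$, away from $c$. Since $\log|T'|$ is continuous there, Cesàro averaging gives $\lya(x)=\frac1q\log|(T^q)'(p)|$, so $\ulya_T$ exists and equals this value. It is negative exactly when $O(p)$ is strictly stable. In the neutral case $|(T^q)'(p)|=1$ it is $0$, and $T$ then has no strictly stable orbit, because the attractor is unique. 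In this case there is no acip with positive entropy, which is consistent with $\ulya_T\le 0$.

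\emph{Case of no periodic attractor.} Then every periodic orbit is repelling (by \cite[Lemma A.8]{NowickiSands1998}, a neutral orbit would force an attractor), so $\lya_\pi(T)\ge 0$ and there is no strictly stable orbit. For the lower bound I take $x\in G(T)$ from Lemma \ref{lemma keller} and chain \eqref{eq H decomposition} with \eqref{eq keller} to get
\[
\log|(T^{n_k(x)})'(x)|\ge -k\log M_T+n_k(x)\lya_\pi(T).
\]
Since $n_{k+1}(x)-n_k(x)\to\infty$ by \eqref{eq stap big}, we have $k/n_k(x)\to 0$, and hence $\ulya(x)\ge\lya_\pi(T)\ge0$ for Lebesgue almost every $x$. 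For constancy, I would use the Blokh--Lyubich theorem that $\Leb$ is ergodic for $S$-unimodal maps without periodic attractors. Together with the invariance of $\ulya$, this makes $\ulya$ a.e. equal to a constant $\ulya_T\ge 0$.

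It remains to show that $\ulya_T>0$ if and only if there is an acip $\mu$ with positive entropy. If such a $\mu$ exists, it is ergodic with basin of full Lebesgue measure, and $\log|T'|\in L^1(\mu)$. Birkhoff's theorem then gives $\lya(x)=\int\log|T'|\,d\mu$ for a.e. $x$, and the Ledrappier/Pesin formula gives $\int\log|T'|\,d\mu=h_\mu(T)>0$. The converse is the main obstacle: from $\ulya_T>0$ one must build an acip. I would try Keller's approach. Along times $n$ where $\frac1n\log|(T^n)'(x)|\ge\ulya_T/2$, use the Koebe principle (Lemma \ref{lemma Koebe}) together with the nonnegative-exponent estimates (Lemma \ref{lemma Nowicki and Sands}) to find, for a positive-measure set of $x$, intervals of definite size around $T^n(x)$ that are pulled back diffeomorphically with bounded distortion. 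On a positive density of times $n$ this gives a uniform lower bound on the density of $\frac1n\sum_{k<n}\Leb\circ T^{-k}$ on a fixed interval. A weak$^*$ limit then has a nonzero absolutely continuous part, which yields an acip by the Hofbauer-tower/Lasota--Yorke type argument. Its entropy is positive because its exponent is positive, again by the Ledrappier formula. Consequently, if there is no acip with positive entropy, then $\ulya_T\le 0$, and combined with $\ulya_T\ge 0$ this gives $\ulya_T=0$.
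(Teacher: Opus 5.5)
Most of your proposal is sound. That includes the invariance of $\ulya$, the periodic-attractor case, the bound $\ulya(x)\ge\lya_\pi(T)\ge 0$ obtained from \eqref{eq keller}, \eqref{eq H decomposition} and \eqref{eq stap big}, constancy via ergodicity of $\Leb$, and the implication ``acip with positive entropy $\Rightarrow \ulya_T>0$'' via Birkhoff and Ruelle/Ledrappier.

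\textbf{The gap} is the converse: $\ulya_T>0$ implies an acip with positive entropy, or equivalently, no such acip implies $\ulya_T\le 0$. This is the substantive part of Keller's theorem, and it is exactly what the paper uses in Section~\ref{sec proof of theorem main}. The paper does not reprove it. It cites \cite[Theorem 3(b)]{Keller1990}, whose argument goes through Markov extensions and Hopf decompositions, and only checks that $T''(c)\neq 0$ can be weakened to non-flatness because it enters solely through $M_T<\infty$. Your sketch does not replace this. Its decisive step is to find, for a positive-measure set of $x$, intervals of definite size around $T^n(x)$ that pull back diffeomorphically with bounded distortion. That step does not follow from the tools you name:
\begin{itemize}
\item The maximal interval $J\ni x$ on which $T^n$ is monotone has $T^n(J)$ bounded by points of $\{0\}\cup\{T^k(c):1\le k\le n\}$.
\item $T^n(x)$ may lie extremely close to one of these endpoints while $|(T^n)'(x)|$ is exponentially large.
\item The Koebe principle (Lemma~\ref{lemma Koebe}) takes such space as a hypothesis and cannot create it.
\item Lemma~\ref{lemma Nowicki and Sands} is only a lower bound on derivatives along orbit segments that never exceed their initial point. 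It says nothing about the size of monotone branches.
\end{itemize}
Controlling how typical orbits approach the postcritical set is the whole difficulty. You need an additional mechanism, such as Keller's, to handle it.

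There are also some smaller points.
\begin{itemize}
\item A positive upper exponent only gives infinitely many $n$ with $\frac1n\log|(T^n)'(x)|\ge\ulya_T/2$, possibly of zero density. To get a positive proportion of good times in $[1,N]$ you need a Pliss-type selection of hyperbolic times, which is available because $\log|T'|$ is bounded above.
\item The last step is easier than you suggest. If a weak$^*$ limit $\mu$ of the Ces\`aro averages satisfies $\mu\ge\delta\,\Leb|_I$, its absolutely continuous part is already invariant, since $T$ is nonsingular and maps null sets to null sets. No tower or Lasota--Yorke argument is needed there.
\item In the attractor case, the orbit does not stay away from $c$ when $c\in O(p)$. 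For a superattracting orbit $\ulya=-\infty$ almost everywhere, so the constant must be allowed to be $-\infty$. This imprecision is shared with the statement itself.
\end{itemize}
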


The following theorem was proved by Przytycki \cite{characteristic1993}. It was later generalized by Rivera-Letelier \cite{Rivera-Letelier} to a more general setting.

\begin{thm}{\cite[Theorem B]{characteristic1993}, \cite[Proposition A.1]{Rivera-Letelier}}\label{thm Przytycki}
    Let $T$ be an $S$-unimodal map with a non-flat critical point and let $\mu$ be a $T$-invariant Borel probability measure. We assume that $T$ has no periodic attractors. Then we have
    \[
    \int \log |T'| d\mu\geq 0.
    \]
\end{thm}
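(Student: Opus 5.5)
We argue by contradiction, first reducing to an ergodic measure. By the ergodic decomposition it suffices to treat ergodic $\mu$. If $\mu(\{c\})>0$, then $c$ is periodic, and its orbit is a superattracting periodic orbit, hence a periodic attractor. This is excluded, so $\mu(\{c\})=0$. Suppose $\int\log|T'|\,d\mu=-\chi<0$, with $\chi=\infty$ allowed a priori. Since $\log|T'|$ is bounded above, either the integral is $-\infty$ or $\log|T'|\in L^1(\mu)$. By \eqref{eq critical order}, $\log|T'(x)|=(l-1)\log|x-c|+O(1)$, so in the finite case $\log|x-c|\in L^1(\mu)$. In the case $-\infty$ we truncate and work with a large finite $\chi$ throughout; the contraction obtained below is then only stronger. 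The goal is a nondegenerate interval whose forward images shrink exponentially and never contain $c$, which will contradict the absence of periodic attractors.

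\emph{Step 1: $\mu$-typical orbits contract and recur slowly.} By Birkhoff's theorem, for $\mu$-a.e.\ $x$ we have $\frac1n\log|(T^n)'(x)|\to-\chi$. Fix $0<\alpha<\chi/4$. By invariance,
\[
\mu\{x:|T^nx-c|<e^{-\alpha n}\}=\mu\{|x-c|<e^{-\alpha n}\}.
\]
These numbers are summable in $n$ because $\sum_n \mu\{|x-c|<e^{-\alpha n}\}\le 1+\alpha^{-1}\int|\log|x-c||\,d\mu<\infty$. Borel--Cantelli then shows that $\mu$-a.e.\ orbit satisfies the slowly recurrent condition of Definition \ref{def slowly recurrent}. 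We fix a point $x$ with both properties and with $T^nx\neq c$ for all $n$. Hence there are constants $C_x\ge1$ and $N_x$ such that $|(T^n)'(x)|\le C_x e^{-3\chi n/4}$ and $|T^nx-c|\ge C_x^{-1}e^{-\alpha n}$ for all $n\ge0$.

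\emph{Step 2: a shrinking interval around $x$, by induction on $n$.} Let $J=B(x,r)$ with $r$ small, depending on $x$. We prove inductively the following: for every $n$, $T^n|_J$ is a diffeomorphism, $c\notin T^kJ$ for $k<n$, and $|T^nJ|\le 2C_x r e^{-\chi n/2}$.

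The tool is the elementary distortion estimate away from $c$. By \eqref{eq critical order} and the $C^3$ regularity, $|(\log|T'|)'(y)|\le D/|y-c|$. Consequently, on an interval $U$ with $c\notin U$ and $|U|\le\frac12\mathrm{dist}(U,c)$, the distortion of $T$ is at most $\exp\bigl(2D|U|/\mathrm{dist}(U,c)\bigr)$.

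Assume the statement holds up to step $n$. Then $|T^kJ|\le 2C_xre^{-\chi k/2}$, while $\mathrm{dist}(T^kJ,c)\ge C_x^{-1}e^{-\alpha k}-2C_xre^{-\chi k/2}\ge \frac12C_x^{-1}e^{-\alpha k}$ once $r$ is small, since $\alpha<\chi/2$. The distortion of $T^{n+1}$ on $J$ is therefore bounded by
\[
\exp\Bigl(\sum_k 8DC_x^2 r e^{-(\chi/2-\alpha)k}\Bigr)\le 2
\]
for $r$ small. This gives $|T^{n+1}J|\le 2r|(T^{n+1})'(x)|\le 2C_xre^{-3\chi(n+1)/4}$, which closes the induction. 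The same distance bound shows $c\notin T^{n+1}J$.

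\emph{Step 3: contradiction.} The interval $J$ is a homterval: every $T^n|_J$ is monotone. Moreover $|T^nJ|\to0$. For $S$-unimodal maps with non-flat critical point there are no wandering intervals. Therefore every homterval either is contained in the basin of a periodic attractor or eventually maps into such a basin (see \cite[Chapter II]{de2012one}). Either way a periodic attractor exists, contradicting the hypothesis.

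The main obstacle is Step 2. There the uniform constants $C_x$ and the $o(n)$ errors must be absorbed by choosing $r$ after $x$. Making the distortion sum converge requires exactly the inequality $\alpha<\chi/2$ between the slow-recurrence rate and the contraction rate. The integrability of $\log|x-c|$ from Step 1, which feeds into Borel--Cantelli, is what guarantees that such an $\alpha$ can be chosen.
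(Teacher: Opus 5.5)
The paper gives no proof of this statement; it imports it from \cite{characteristic1993} and \cite{Rivera-Letelier}. Your argument is correct when $\int\log|T'|\,d\mu>-\infty$. It is a standard Pesin-type proof: integrability gives slow recurrence by Borel--Cantelli, slow recurrence makes the distortion sum converge, and an exponentially shrinking homterval contradicts the absence of periodic attractors via the no-wandering-intervals theorem. The constants need minor fixing: $|B(x,r)|=2r$, and the induction closes more cleanly if you carry the rate $3\chi/4$.

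The genuine gap is the case $\int\log|T'|\,d\mu=-\infty$, which your truncation remark does not handle. Step 1 needs $\sum_n\mu\{|x-c|<e^{-\alpha n}\}<\infty$, i.e.\ $\log|x-c|\in L^1(\mu)$. By \eqref{eq critical order} this is equivalent to $\int\log|T'|\,d\mu>-\infty$. So in the $-\infty$ case the series diverges for every $\alpha>0$ and you have no slow recurrence. Truncating $\log|T'|$ changes the observable, not the orbit. The points $T^kx$ can still come super-exponentially close to $c$ at times when $|T^kJ|$ is not correspondingly small. Then $T^kJ$ may contain $c$, and Step 2 fails: both the diffeomorphism claim and the distortion sum break down.

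Nor can this case be discarded. Ergodic components may a priori have exponent $-\infty$, and excluding that, i.e.\ showing $\log|T'|\in L^1(\mu)$, is exactly what the paper uses this theorem for in the proof of Theorem \ref{thm main}.

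What is missing is an a priori control of deep returns to $c$ that does not presuppose integrability. One way to get it from the lemmas quoted in the paper is as follows. Atomic ergodic measures live on periodic orbits, whose exponents are $\ge 0$ here, so take $\mu$ non-atomic and ergodic.
\begin{itemize}
\item Cut a typical orbit at its successive running maxima $j_1<j_2<\cdots$.
\item On each segment, Lemma \ref{lemma Nowicki and Sands} gives $|(T^{j_{i+1}-1-j_i})'(T^{j_i}x)|\ge K\rho^{\,j_{i+1}-1-j_i}$.
\item Set $u_i:=T^{j_i-1}x$. Since the record values increase, \eqref{eq critical order} gives $|u_{i+1}-c|\le W^{2/l}|u_i-c|$.
\item Hence Lemma \ref{lemmma Ruelle's rule}, with its constant $C$, yields $-\log|u_i-c|\le (j_{i+1}-j_i)/C+O(1)$.
\item The number of records up to time $n$ is $o(n)$, because $\mu(\{\max\mathrm{supp}\,\mu\})=0$.
\end{itemize}
Evaluating $\frac1n\log|(T^n)'(x)|$ at $n=j_{m+1}-1$ then gives $\int\log|T'|\,d\mu\ge\log\rho-(l-1)/C>-\infty$. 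After that, your argument applies.
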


We end this section with the following well-known theorem (see, for example, \cite[Chapter V, Theorem 1.2]{de2012one}).
\begin{thm}\label{thm ergodicity}
    Let $T$ be an $S$-unimodal map with a non-flat critical point. We assume that $T$ has no periodic attractor. Then the Lebesgue measure $\Leb$ on $[0,1]$ is ergodic with respect to $T$, that is, for every Borel measurable set $A\subset [0,1]$ satisfying $T^{-1}(A)=A$ we have $\Leb(A)\in \{0,1\}$.
\end{thm}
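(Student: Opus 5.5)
The plan is the classical density-point argument of Blokh--Lyubich (as presented in \cite[Chapter V]{de2012one}). Let $A$ be a Borel set with $T^{-1}(A)=A$ and suppose $\Leb(A)>0$ and $\Leb(A^c)>0$; we aim for a contradiction. Note that $A^c$ is also invariant, so everything proved for $A$ holds symmetrically for $A^c$.

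\textbf{Step 1: typical orbits accumulate on $c$.} I first show that for $\Leb$-almost every $x$ we have $c\in\omega(x)$. If instead the orbit of $x$ stays outside $B(c,\epsilon)$ for all large $n$, the set of such points is hyperbolic by Mañé's theorem, since $T$ has no periodic attractors and hence no non-repelling periodic orbits. By the Koebe principle (Lemma \ref{lemma Koebe}) such a hyperbolic set has zero Lebesgue measure: around any density point one would pull back intervals of definite size with bounded distortion, and these images must meet $B(c,\epsilon)$. Since $\mathcal{R}$ (equivalently $G(T)$ from Lemma \ref{lemma keller}) has full measure, this step is essentially already available in the paper.

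\textbf{Step 2: transport density to a fixed neighbourhood of $c$.} Take a Lebesgue density point $x$ of $A$ with $c\in\omega(x)$ and $x$ not a preimage of $c$. Fix a small $\delta>0$, to be chosen in Step 3. For infinitely many $n$ with $T^n(x)$ close to $c$, let $J_n\ni x$ be the maximal interval on which $T^n$ is a diffeomorphism, and look at $T^n(J_n)$. The goal is to show that along a subsequence $T^n(J_n)$ contains a $\tau$-scaled neighbourhood of an interval $I_n\ni T^n(x)$ with $|I_n|\ge\delta$, where $\tau$ is uniform. Granting this, Lemma \ref{lemma macroscopic Koebe} gives $|T^{-n}(I_n)\cap J_n|\to0$. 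By bounded distortion (Lemma \ref{lemma Koebe}) the relative measure of $A^c$ in $I_n$ is then at most $K(\tau)$ times its relative measure in the shrinking pullback, which tends to $0$. Passing to a limit interval $I$ of the $I_n$, we get $\Leb(A^c\cap I)=0$ with $|I|\ge\delta$ and $I$ within distance $\delta$ of $c$.

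\textbf{Step 3 (main obstacle): uniform Koebe space.} The hard part is producing the $\delta$- and $\tau$-uniform Koebe space along a sequence of returns. I would choose the times $n$ to be the closest-return times $n_k(x)$ from (S1)--(S2), so that $T^{n_k}(x)\in\mathcal{H}^{n_{k+1}-n_k}$-type positions. Since $(x,\hat x)$ is symmetric about $c$, the maximal diffeomorphic extension at a closest return reaches the boundary of a symmetric interval around $c$. Using Lemma \ref{lemma Nowicki and Sands}, the real bounds, and the absence of wandering intervals, I would bound the image from below by a definite nice interval, either a fixed neighbourhood of $c$ or a renormalization interval $J_k$ whose images cover it. In the infinitely renormalizable case I would instead run the argument on the unique minimal Cantor attractor; its basin has full measure and the intervals $J_k$ give the Koebe space. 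In every case $I$ can be taken to contain $c$, or to map onto a fixed neighbourhood $U$ of $c$ within boundedly many iterates. Invariance of $A$ then transfers full density of $A$ to all of $U$.

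\textbf{Step 4: conclusion.} Applying Steps 2--3 to $A^c$ gives $\Leb(A\cap U')=0$ for a similar neighbourhood $U'$ of $c$. Shrinking $\delta$ so that $U\cap U'$ has positive length contradicts $\Leb(A\cap U)=|U|$. Hence $\Leb(A)\in\{0,1\}$. The delicate point throughout is the uniformity in Step 3, which is where the negative Schwarzian, the non-flatness of $c$, and the no-periodic-attractor hypothesis are all used.
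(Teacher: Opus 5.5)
The paper does not prove this statement; it quotes it as the Blokh--Lyubich ergodicity theorem from \cite[Chapter V, Theorem 1.2]{de2012one}. Your outline follows the spirit of that classical proof, and Steps 1, 2 and 4 are essentially fine, with two small corrections. First, the shrinking of the pullbacks comes from the absence of homtervals, which forces $|J_n|\to 0$. It does not come from Lemma \ref{lemma macroscopic Koebe}, which concerns images of \emph{small} length. Second, Step 4 can be simplified: $\Leb(I\setminus A)=0$ for a single nondegenerate interval $I$ already gives $\Leb(A)=1$, because some $T^j(I)$ is a neighbourhood of $c$ and a.e.\ orbit enters it.

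The real problem is that the whole content of the theorem sits in Step 3, and there you only name ingredients. The endpoints of $T^n(J_n)$ are points $T^i(c)$ with $1\le i\le n$ (or $0$). Nothing in (S1)--(S2) keeps them a definite distance from a definite-size interval around $T^n(x)$. Lemma \ref{lemma Nowicki and Sands} is a derivative bound and yields no Koebe space.

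More decisively, the uniform-scale statement you aim for in Steps 2--3 is false when $T$ is infinitely renormalizable. Let $J_k$ be the renormalization intervals with periods $p_k$, and let $C_k=\bigcup_{i<p_k}T^i(J_k)$. To avoid a clash of notation, write $Z_n$ for your maximal monotonicity interval around $x$.

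\begin{itemize}
\item Since $\mathrm{int}\,C_k$ is a neighbourhood of $c$, Step 1 shows a.e.\ orbit enters it, say $T^{j_0}(x)\in \mathrm{int}\,T^i(J_k)$.
\item Because $|Z_n|\to0$, for large $n$ we get $T^{j_0}(Z_n)\subset \mathrm{int}\,T^i(J_k)$.
\item Since $T^{p_k}(J_k)\subset J_k$, the image $T^n(Z_n)$ then lies in a single interval of $C_k$.
\item The maximal length of the intervals of $C_k$ tends to $0$ as $k\to\infty$.
\end{itemize}

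So for a.e.\ $x$ every diffeomorphic image $T^n(Z_n)$ shrinks to zero length. There is no $\delta>0$ with $|I_n|\ge\delta$, and no fixed neighbourhood $U$ as claimed.

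Your sentence about running the argument on the Cantor attractor does not repair this. Working with the $J_k$, you can at best get density estimates for $A$, and separately for $A^c$, inside the \emph{shrinking} intervals $J_k$. To play these against each other you need Koebe space for the first-entry branches onto $J_k$ that is uniform in $k$, at least along a common subsequence of levels. That is essentially real a priori bounds, a deep input you neither state nor prove, and none of the lemmas available in the paper supplies it. As written, the proposal outlines the known proof but leaves its key step unproved.
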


\section{Proof of Theorem \ref{thm main}}\label{sec proof of theorem main}

In this section, we give a proof of Theorem \ref{thm main}.
The proof of Theorem \ref{thm main} is based on the following result of independent interest.
\begin{prop}\label{prop nonegative}
    Let $T$ be an $S$-unimodal map with a non-flat critical point. Assume that $T$ has no periodic attractors. Then for $\Leb$-almost every $x\in [0,1]$ we have 
    $\llya(x)\geq 0$.
\end{prop}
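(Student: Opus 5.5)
The plan is to combine Keller's block decomposition along the return times $n_k(x)$ with the slow recurrence of Proposition \ref{prop slowly recurrent}. The decomposition controls complete blocks and Lemma \ref{lemma Nowicki and Sands} controls the last incomplete block.

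\emph{Choice of good points.} Since $T$ has no periodic attractor, \cite[Lemma A.8]{NowickiSands1998} shows that every periodic point $y$ of period $n$ satisfies $|(T^n)'(y)|>1$. Hence $\lya_\pi(T)\ge 0$. I restrict to the full-measure set $G(T)\cap A$, where $G(T)\subset\mathcal{R}$ comes from Lemma \ref{lemma keller} and $A$ is the set of points whose orbits are slowly recurrent (Proposition \ref{prop slowly recurrent}). Fix such an $x$ and write $n_k=n_k(x)$.

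\emph{Complete blocks.} By \eqref{eq H decomposition} and \eqref{eq keller}, together with $\lya_\pi(T)\ge0$, each block satisfies $\log|(T^{n_{j+1}-n_j})'(T^{n_j}x)|\ge -\log M_T$. The chain rule then gives
\[
\log|(T^{n_k})'(x)|\ge -k\log M_T .
\]
By \eqref{eq stap big}, $n_{k+1}-n_k\to\infty$, so $k/n_k\to0$. The accumulated loss is therefore $o(n)$.

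\emph{Incomplete block.} Let $n_k<n\le n_{k+1}$, set $y:=T^{n_k}x$ and $m:=n-n_k$. By the definition of $n_{k+1}$, for $1\le i<n_{k+1}-n_k$ the point $T^i y$ lies outside $(y,\hat y)$. Unimodality then gives $T^{i+1}y\le T(y)$. Thus $z:=T(y)\in[0,T(c)]$ satisfies $T^i(z)\le z$ for $0\le i\le m-1$. Fix $\rho<1$. Lemma \ref{lemma Nowicki and Sands} gives $|(T^{m-1})'(z)|\ge K\rho^{m-1}$. For the first step, \eqref{eq critical order} gives $|T'(y)|\ge W^{-1}|y-c|^{l-1}$. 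Slow recurrence with a chosen $\alpha>0$ gives $|y-c|\ge C e^{-\alpha n_k}$ for large $k$. Putting the pieces together,
\[
\log|(T^n)'(x)|\ge -k\log M_T-\log W+(l-1)(\log C-\alpha n)+\log K+(n-1)\log\rho .
\]
Dividing by $n$ and letting $n\to\infty$ yields $\llya(x)\ge -(l-1)\alpha+\log\rho$. Since $\alpha>0$ and $\rho<1$ are arbitrary (the set $A$ works for all $\alpha$ simultaneously), this gives $\llya(x)\ge0$.

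\emph{Expected obstacle.} The delicate step is the incomplete block. One must check carefully that the hypothesis of Lemma \ref{lemma Nowicki and Sands} holds, namely that $T^i(z)\le z$ in the exact index range; the endpoint conventions around $n_{k+1}$ need care here. One must also check that the single passage near $c$ at time $n_k$ is the only place where a small derivative enters. This is where slow recurrence is indispensable: without it, $|T'(y)|$ could be exponentially small and destroy the bound.
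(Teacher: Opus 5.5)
Your proposal is correct and follows the paper's proof essentially step for step. Both arguments use the same full-measure set $G(T)\cap A$ and bound the complete blocks by Keller's estimate \eqref{eq keller}, using $\lya_\pi(T)\ge 0$. Both get $k/n\to0$ from \eqref{eq stap big}, control the single derivative factor at $T^{n_k}(x)$ by slow recurrence and \eqref{eq critical order}, and apply Lemma \ref{lemma Nowicki and Sands} at $T^{n_k+1}(x)$ for the remaining iterates. Your indexing $n_k<n\le n_{k+1}$ is slightly cleaner than the paper's $n_{k(j)}\le j<n_{k(j)+1}$: the paper's decomposition $j=\max\{\ell(j)-1,0\}+1+n_{k(j)}(x)$ is off by one when $\ell(j)=0$, though this is harmless.
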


\begin{proof}
Let $l\in (1,\infty)$ be the order of critical point $c$. By Proposition \ref{prop slowly recurrent} and Lemma \ref{lemma keller}, there exists $A\subset G(T)$ such that $\Leb(A)=1$ and for all $x\in A$ the orbit $O(x)$ of $x$ satisfies the slowly recurrent condition. Here, $G(T)$ denotes the set obtained in Lemma \ref{lemma keller}.
Let $\alpha>0$ and let $0<\rho<1$. Then there exist constants $C>0$ and $N\in \mathbb{N}$ such that \eqref{eq slowly recurrent} holds. For each $j\in\mathbb{N}$ there exists the unique number $k(j)$ such that 
\begin{align}\label{eq def j}
n_{k(j)}(x)\leq j<n_{k(j)+1}(x).
\end{align}
We choose $\tilde N\in\mathbb{N}$ sufficiently large so that for all $j\geq \tilde N$ we have $n_{k(j)}(x)\geq N$.
Let $j\geq \tilde N$.
We set 
\begin{align}\label{eq def l(j)}
\ell(j):=j-n_{k(j)}(x)\in \{0,1,\cdots,n_{k(j)+1}(x)-n_{k(j)}(x)-1\}.
\end{align}
Since we have
\begin{align*}
    &j=\max\{\ell(j)-1,0\}+1+n_{k(j)}(x)
    \\&=\max\{\ell(j)-1,0\}+1+\sum_{m=1}^{k(j)}(n_{m}(x)-n_{m-1}(x)),
\end{align*}
we obtain
\begin{align}\label{eq decomposition}
    &\log |(T^j)'(x)|= 
    \log \left|\left(T^{\max\{\ell(j)-1,0\}}\right)'\left(T^{1+n_{k(j)}(x)}(x)\right)\right|
    \\&+\log \left|T'\left(T^{n_{k(j)}(x)}(x)\right)\right|
    +\sum_{m=1}^{k{(j)}}\log\left|\left(T^{n_{m}(x)-n_{m-1}(x)}\right)'\left(T^{n_{m-1}(x)}(x)\right)\right|,\nonumber
\end{align}
where $T^0$ is defined to be the identity map.
If $\max\{\ell(j)-1,0\}=0$ then we have
\[
\log \left|\left(T^{\max\{\ell(j)-1,0\}}\right)'\left(T^{1+n_{k(j)}(x)}(x)\right)\right|=0.
\]
If $\max\{\ell(j)-1,0\}>0$ then by the construction of the sequence $\{n_{k}(x)\}_{k\in\mathbb{N}}$ and \eqref{eq def j}, for all $0<i\leq \max\{\ell(j)-1,0\}$ we have 
\[T^i(T^{1+n_{k(j)}(x)}(x))=T^{1+n_{k(j)}(x)+i}(x)\leq T^{1+n_{k(j)}(x)}(x).\]
Hence, by Lemma \ref{lemma Nowicki and Sands}, we obtain
\begin{align}\label{eq nonnegative 1}
    \log \left|\left(T^{\max\{\ell(j)-1,0\}}\right)'\left(T^{1+n_{k(j)}(x)}(x)\right)\right|\geq \log K + \max\{\ell(j)-1,0\}\log \rho,
\end{align}
where $K$ denotes the constant obtained in Lemma \ref{lemma Nowicki and Sands}.
By \eqref{eq critical order} and \eqref{eq slowly recurrent}, we have 
\begin{align}\label{eq nonnegative 2}
    &\log \left|T'\left(T^{n_{k(j)}(x)}(x)\right)\right|
    \geq 
    -\log W+(l-1) \log |T^{n_{k(j)}(x)}(x)-c|
    \\&\geq -\log W+(l-1)\log C-\alpha(l-1) n_{k(j)}(x), \nonumber
\end{align}
where $W$ denotes the constant in \eqref{eq critical order}. 
By \eqref{eq keller} and \eqref{eq H decomposition}, we have
\begin{align}\label{eq nonnegative 3}
    &\sum_{m=1}^{k(j)}\log\left|\left(T^{n_{m}(x)-n_{m-1}(x)}\right)'\left(T^{n_{m-1}(x)}(x)\right)\right|
    \\&
    \geq
    \sum_{m=1}^{k(j)}
    \left(
    -\log M_T
    +(n_{m}(x)-n_{m-1}(x))\lya_\pi(T)
    \right) 
    \geq -k(j)\log M_T +n_{k(j)}(x) \lya_\pi(T). \nonumber
\end{align}
By \eqref{eq decomposition}, \eqref{eq nonnegative 1}, \eqref{eq nonnegative 2} and \eqref{eq nonnegative 3}, we obtain
\begin{align}\label{eq nonegative midle}
    \log |(T^j)'(x)|
    \geq 
    &\log KW^{-1}C^{l-1} -k(j)\log M_T+\max\{\ell(j)-1,0\}\log \rho
    \\& - n_{k(j)}(x)\alpha(l-1)+ n_{k(j)}(x)\lya_\pi(T).\nonumber
\end{align}
We will show that 
\begin{align}\label{eq nonegative 4}
    \lim_{j\to\infty}\frac{k(j)}{j}=0.
\end{align}
Let $Q>0$. 
By \eqref{eq stap big}, there exists $\tilde L\in\mathbb{N}$ such that for all $k\geq \tilde L$ we have 
\begin{align}
    n_k(x)-n_{k-1}(x)\geq Q.
\end{align}
Note that, since $\{n_k(x)\}_{k\in\mathbb{N}}$ is monotone increasing, we have $\lim_{j\to\infty} k(j)=\infty$.
For a sufficiently large $j\in\mathbb{N}$ with $k(j)\geq \tilde L$ we have 
\begin{align*}
    \frac{k(j)}{j}\leq \frac{k(j)}{\sum_{m=\tilde L}^{k(j)}(n_{m}(x)-n_{m-1}(x))}
    \leq \frac{k(j)}{(k(j)-\tilde L+1)Q}.
\end{align*}
Hence, $\limsup_{j\to \infty}k(j)/j\leq Q^{-1}$. Letting $Q\to\infty$, we obtain \eqref{eq nonegative 4}. 

By \eqref{eq def j} and \eqref{eq def l(j)}, we have $\max\{\ell(j)-1,0\}\leq j$ and $n_{k(j)}(x)\leq j$. Therefore, since $-\alpha(l-1)<0$ and $\log \rho<0$ we have 
\begin{align}\label{eq nonegative 5}
    \frac{\max\{\ell(j)-1,0\}}{j}\log \rho\geq \log \rho \text{ and }
    - \frac{n_{k(j)}(x)}{j}\alpha(l-1)\geq -\alpha(l-1).
\end{align}
Since $T$ has no periodic attractor, we have $\lya_\pi(T)\geq 0$. Therefore, by using \eqref{eq nonegative midle}, \eqref{eq nonegative 4} and \eqref{eq nonegative 5} and noting that the constants $K,W,C, M_T,l$ do not depend on $j$, we obtain
$
 \llya(x)\geq \log \rho -\alpha(l-1).
$
Letting $\rho\to 1$ and $\alpha\to 0$, we obtain the desired result.
\end{proof}

\emph{Proof of Theorem \ref{thm main}}
We denote by $\ulya_T$ the number obtained in Theorem \ref{thm keller} and set 
\[
\lya_T:=\ulya_T.
\]
   By \cite[Chapter V, Theorem 1.3]{de2012one} if $T$ has periodic attractor then there exists a periodic point $p$ with period $q\in\mathbb{N}$ such that for $\Leb$-almost every $x\in [0,1]$ we have $\omega(x)=O(p)$. By Theorem \ref{thm keller} this implies that for $\Leb$-almost every $x\in [0,1]$ we have 
   \[
   \lim_{n\to\infty}\frac{1}{n}\log |(T^n)'(x)|=\frac{1}{q}\log |(T^q)'(x)|=\lya_T.
   \]

We assume that $T$ has no periodic attractor.
If $T$ has an absolutely continuous  $T$-invariant probability measure $\mu_T$ with a positive entropy then by Theorem \ref{thm ergodicity}, the measure $\mu_T$ is ergodic with respect to $T$. Furthermore, by Theorem \ref{thm Przytycki}, $\log |T'|$ is $\mu_T$-integrable. Therefore, by Birkhoff's ergodic theorem, 
the set 
\[
L_T:=\left\{x\in [0,1]:\lim_{n\to\infty}\frac{1}{n}\log |(T^n)'(x)|=\int \log |T'|d\mu_T\right\}
\]
has full-measure with respect to $\mu_T$. In particular, it has positive Lebesgue measure. Therefore, since we have $T^{-1}(L_T)=L_T$, Theorem \ref{thm ergodicity} implies that $\Leb(L_T)=1$.

We assume that $T$ has no absolutely continuous $T$-invariant probability measure with a positive entropy. Then, by Theorem \ref{thm keller}, for $\Leb$-almost every point $x\in [0,1]$ we have 
   $ \ulya(x)=\lya_T\leq 0.$
    Combining this with Proposition \ref{prop nonegative}, we conclude that 
    for $\Leb$-almost every point $x\in [0,1]$ we have $\lya(x)=\lya_T=0$.
    The last statement of Theorem \ref{thm main} follows from Theorem \ref{thm keller}.
\qed

\section{Proof of Theorem \ref{thm quadratic map}}\label{sec proof of theorem quadratic}
This section is devoted to the proof of Theorem \ref{thm quadratic map}.
For $a\in [0,4]$ we define the quadratic map $\map:[0,1]\rightarrow [0,1]$ by 
\[
\map(x)=ax(1-x).
\]
Let $a\in [0,4]$. We assume that the critical point $c=1/2$ of $\map$ is not a periodic point. 
We first recall a theorem of Hofbauer and Keller \cite[Theorem 4]{HofbauerKeller}. To do this, we describe kneading sequences and kneading maps.  
We define the kneading sequence $\kneading=e_1e_2\cdots\in \{0,1\}^{\mathbb{N}}$ of $\map$ as follows: For each $k\in\mathbb{N}$ we set $e_k:=0$ if $\map^k(c)\in [0,c)$ and $e_k:=1$ if $\map^k(c)\in (c,1]$.

A map $Q:\mathbb{N}\rightarrow\mathbb{N}\cup\{0\}$ is called a kneading map if the following conditions hold:
\begin{itemize}
    \item[(Q1)] For all $k\in\mathbb{N}$ we have $Q(k)<k$.
    \item[(Q2)] For all $k\in\mathbb{N}$ with $Q(k)\geq 1$ we have 
    \begin{align*}
        \{Q(j+k)\}_{j\in\mathbb{N}}\geqq \{Q(Q(Q(k))+j)\}_{j\in\mathbb{N}},
    \end{align*}
    where $\geqq$ denotes the lexicographic ordering.
\end{itemize}
Let $Q$ be a kneading map. We construct $\{r(k)\}_{k\in\mathbb{N}\cup\{0\}}$ and $\{S(k)\}_{k\in\mathbb{N}\cup\{0\}}$ as follows.
We set $r(0):=1$ and $S(0):=1$.
We also define $r(1):=S(Q(1))=S(0)=1$ and $S(1):=r(0)+r(1)$. Assume that, for some $k\in \mathbb{N}$, numbers $S(j)$ and $r(j)$ have already been defined for all $1\leq j\leq k$. We set 
\begin{align*}
    r(k+1):=S(Q(k+1)) \text{ and }S(k+1):=\sum_{j=0}^{k+1} r(j).
\end{align*} 
By the definition of $\{r(k)\}_{k\in\mathbb{N}\cup\{0\}}$ and $\{S(k)\}_{k\in\mathbb{N}\cup\{0\}}$ and (Q1), for all $k\in\mathbb{N}$ we have 
\begin{align}\label{eq upper bound S}
    S(k)\leq 2^k
\end{align}
Using these sequences, we define the sequence $\kneading=e_1e_2\cdots\in \{0,1\}^\mathbb{N}$ as follows. We set $e_1=e_{S(0)}:=1$ and $e_2=e_{S(1)}=0$. Assume that, for some $k\in \mathbb{N}$, $e_j$ have already been defined for all $1\leq j\leq S(k)$. We set 
\begin{align*}
    e_{S(k)+j}=e_j \text{ for all }1\leq j< r(k+1)
    \text{ and }
    e_{S(k+1)}\neq e_{r(k+1)}.
\end{align*}
We refer to the sequence $\kneading$ as the $Q$-sequence of the kneading map $Q$. 
\begin{thm}\label{thm Hofbauer Keller}
    Let $Q$ be a kneading map. Then, there exists $a\in [0,4]$ such that the critical point of $\map$ is not periodic and the kneading sequence of the quadratic map $\map$ is the $Q$-sequence of the kneading map $Q$.
\end{thm}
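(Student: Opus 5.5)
The plan is to follow Hofbauer and Keller and derive the statement from Milnor--Thurston kneading theory for the full quadratic family. There are three steps: translate the kneading map $Q$ into an admissible kneading sequence, realize that sequence by a parameter using monotonicity and continuity of kneading in $a$, and check that the realizing map has non-periodic critical point.

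\textbf{Step 1 (admissibility).} Let $\kneading$ be the $Q$-sequence. Order $\{0,1\}^{\mathbb{N}}$ by the unimodal (parity-lexicographic) order $\prec$: compare at the first differing index $k$, and reverse the comparison when $e_1\cdots e_{k-1}$ contains an odd number of $1$'s. I would show that $\sigma^k(\kneading)\preceq\kneading$ for all $k\geq 0$, where $\sigma$ is the shift.
\begin{itemize}
\item The integers $S(k)$ are exactly the cutting times of $\kneading$.
\item Each block $e_{S(k)+1}\cdots e_{S(k+1)}$ copies the prefix $e_1\cdots e_{r(k+1)-1}$ and then deviates at the single position $S(k+1)$, with $r(k+1)=S(Q(k+1))$.
\end{itemize}
Comparing $\sigma^{S(k)}(\kneading)$ with $\kneading$ then reduces to comparing later kneading values $Q(k+j)$ with $Q(Q(Q(k))+j)$. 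Condition (Q2) is precisely what is needed here; (Q1) guarantees that the recursion is well defined. Shifts by non-cutting times are handled by induction on the block structure, since inside a block the shifted sequence agrees with a shift of a shorter prefix.

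\textbf{Step 2 (realization).} Let $K(a)\in\{0,1\}^{\mathbb{N}}$ be the kneading sequence of $\map$, with any symbol $e_k$ for $\map^k(c)=c$ replaced by a one-sided limit convention. By Milnor--Thurston, $a\mapsto K(a)$ is monotone non-decreasing for $\prec$ on $[0,4]$. Moreover $K$ is continuous at every $a$ for which $c$ is not periodic, because each finite prefix of $K(a)$ depends only on the signs of $\map^k(c)-c$, which are locally constant in $a$. The two endpoints are as follows:
\begin{itemize}
\item For $a\leq 2$, $K(a)$ is the minimal sequence $1000\cdots$ (or close to it).
\item For $a=4$, $K(4)=1000\cdots$ as well, but reading from $\map^1(c)=1$, the kneading sequence is maximal, since $\map^2(c)=0$ is fixed.
\end{itemize}
Hence every admissible $\kneading$ lies between the extremal values. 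I would set
\[
a^*:=\sup\{a\in[0,4]: K(a)\preceq\kneading\}.
\]
If $c$ is not periodic for $f_{a^*}$, then continuity of $K$ at $a^*$ together with monotonicity from both sides forces $K(a^*)=\kneading$.

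\textbf{Step 3 (non-periodicity).} I expect this to be the main obstacle. Suppose $\map^p(c)=c$ for $a=a^*$. The one-sided limits of $K$ at $a^*$ are then the two sequences $(e_1\cdots e_{p-1}0)^\infty$ and $(e_1\cdots e_{p-1}1)^\infty$, possibly with a period-doubled variant. These bracket the jump, and by the choice of $a^*$ the sequence $\kneading$ would have to equal one of them. I would rule this out from the construction of the $Q$-sequence:
\begin{itemize}
\item A periodic kneading sequence of this form has cutting times that eventually satisfy $S(k+1)-S(k)=p$ with $Q(k+1)$ constant.
\item The lexicographic condition (Q2), together with $Q(k)<k$ and the fact that $e_{S(k+1)}\neq e_{r(k+1)}$ at every cutting time, contradicts the equality of $\kneading$ with either periodic limit.
\item Concretely, exact periodicity would force the deviation at $S(k+1)$ to reproduce the symbol $e_{r(k+1)}$.
\end{itemize}
If this direct argument fails, the fallback is to use strict monotonicity of kneading in the quadratic family (Douady--Hubbard/Sullivan rigidity). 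With it, the parameters with a given periodic kneading form an interval on whose interior $c$ is attracted rather than periodic, and those sequences are of the excluded form. Then $c$ is non-periodic at $a^*$, and the kneading sequence of $f_{a^*}$ is the $Q$-sequence.
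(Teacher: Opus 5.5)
The paper does not prove this statement; it quotes it from Hofbauer--Keller \cite[Theorem 4]{HofbauerKeller}. Your route is the standard one: admissibility of the $Q$-sequence, then realization via Milnor--Thurston monotonicity and a supremum argument. However, Step~3 rests on a false claim, and it is exactly the case that has to be handled.

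\textbf{Step 3 fails.} A $Q$-sequence can be periodic and equal to a one-sided limit at a superattracting parameter. Take $Q(k)=0$ for odd $k$ and $Q(k)=1$ for even $k$. Condition (Q1) is clear. Condition (Q2) holds with equality, because $Q(Q(Q(k)))=Q(0)=0$ for even $k$; the convention $Q(0)=0$ is implicit in (Q2) as soon as some $Q(k)=1$, e.g.\ in the paper's (D4)--(D5). The recursion gives $S(k)=1,2,4,5,7,8,\dots$ and $\kneading=(101)^\infty$.

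This is the kneading sequence of $\map$ for $a$ slightly below the superattracting period-$3$ parameter $a_3\approx3.832$: there $c$ is in the immediate basin of an attracting $3$-cycle and $\map^3(c)>c$. For $a$ slightly above $a_3$ one has $K(a)=(100)^\infty\succ\kneading$. So your $a^*=\sup\{a:K(a)\preceq\kneading\}$ is exactly $a_3$, where $c$ \emph{is} periodic. In this example the gaps $S(k+1)-S(k)$ alternate between $1$ and $2$ instead of becoming $p=3$, and no deviation contradicts periodicity. Your fallback still concludes that $c$ is non-periodic at $a^*$, which is false here.

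\textbf{How to repair it.} Realize $\kneading$ next to $a^*$ rather than at it. Suppose $\map^p(c)=c$ at $a=a^*$, with itinerary $v=e_1\cdots e_{p-1}$.
\begin{enumerate}
\item Since $a\mapsto\map^p(c)-c$ is a nonconstant polynomial, for $a\neq a^*$ near $a^*$ the point $c$ lies in the immediate basin of the continued attracting $p$-cycle and is not periodic. Moreover $K(a)=(v\epsilon)^\infty$ with $\epsilon$ constant on each side of $a^*$.
\item By monotonicity and closedness of $\{x\preceq\kneading\}$ and $\{x\succeq\kneading\}$, the left and right limits satisfy $L\preceq\kneading\preceq R$, and both lie in $\{(v0)^\infty,(v1)^\infty\}$.
\item Admissibility from Step~1 then forces $\kneading\in\{L,R\}$. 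Write the two sequences as $u_e^\infty\prec u_o^\infty$, where $u_e,u_o\in\{v0,v1\}$ contain an even, resp.\ odd, number of $1$'s. An admissible $x$ strictly between them begins with $v$. By induction, every $\sigma^{kp}x$ lies strictly between them and begins with $u_e$. Indeed, the $u_o$ case would give $\sigma^{(k+1)p}x\succ u_o^\infty\succ x$, contradicting admissibility. Hence $x=u_e^\infty$, a contradiction.
\item Now choose $a\neq a^*$ close to $a^*$ on the corresponding side.
\end{enumerate}
This is the only place where admissibility is really needed. When $c$ is not periodic at $a^*$, your continuity argument gives $K(a^*)=\kneading$ without it.

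\textbf{Step 2 endpoints.} You assign $10^\infty$ to both ends, which makes the bracketing vacuous. In fact $K(a)=0^\infty$ for $a<2$, $K(a)=1^\infty$ for $2<a<1+\sqrt5$, and $K(4)=10^\infty$ is maximal. Since every $Q$-sequence starts with $10$, one has $1^\infty\prec\kneading\preceq10^\infty$, which is what the supremum argument needs.
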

Following Hofbauer and Keller \cite{HofbauerKeller}, we also introduce the following notion. A pair 
$\mathcal{F}:=
\{\{V_k\}_{k\in \mathbb{N}\cup\{0\}},
\{U_k\}_{k\in \mathbb{N}\cup\{0\}}\}$
of sequences of integers is a frame if $V_0=0$, $V_{k-1}<U_k<V_k$ for all $k\in\mathbb{N}$, 
\begin{align}\label{eq frame U}
    U_{k+1}\geq k2^{k+V_k} \text{ for all }k\in\mathbb{N}
\end{align}
and 
\begin{align}\label{eq frame V}
    V_k\geq k^22^{U_k} \text{ for all }k\in\mathbb{N}.
\end{align}
For a frame $\mathcal{F}$ we define the skeleton $\mathscr{S}(\mathcal{F})$ as the set of kneading sequences satisfying 
\begin{align}\label{eq skeleton 1}
Q(i)=U_k
   \text{ for all } k\in\mathbb{N} \text{ and } U_k<i\leq V_k
\end{align}
and 
\begin{align}\label{eq skeleton 2}
    Q(U_{k+1})<U_k \text{ for all }k\in\mathbb{N}.
\end{align}
The following proposition was also proved by Hofbauer and Keller \cite[Proposition 1]{HofbauerKeller}:
\begin{prop}\label{prop Hofbauer Keller}{\cite[Proposition 1]{HofbauerKeller}}
    Let $N\in\mathbb{N}$. Then there are uncountable many different frames $\mathcal{F}$ with $U_1=N+1$ such that for each $Q\in \mathscr{S}(\mathcal{F})$ and the parameter $a$ obtained in Theorem \ref{thm Hofbauer Keller} we have the following: (1) $\map$ has no ergodic absolutely continuous invariant probability measure with a positive entropy, and (2) for $\Leb$-almost every $x\in [0,1]$ and a continuous function $\psi$ on $[0,1]$ we have 
    \begin{align*}
        \lim_{n\to\infty}\left(\frac{1}{S({V_n})}\sum_{k=0}^{S(V_n)-1}\psi(\map^{k}(x))
        -
        \frac{1}{S({U_n})}\sum_{k=0}^{S(U_n)-1}
        \psi\left(\map^{k}\left(f_a(c)\right)\right)
        \right)=0.
    \end{align*}
\end{prop}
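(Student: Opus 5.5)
The plan is to build the frames by an inductive choice that has a binary option at every step, and then read off (1) and (2) from the combinatorics of the skeleton. Fix $N$ and set $U_1:=N+1$. Suppose $U_1,V_1,\dots ,U_k$ have been chosen. We choose $V_k$ subject to \eqref{eq frame V} and then $U_{k+1}$ subject to \eqref{eq frame U}. At each such step we allow two admissible values, for instance the minimal one and the minimal one plus $1$. Distinct infinite choice sequences give distinct frames, which yields uncountably many frames with $U_1=N+1$. We must also check that the skeleton $\mathscr{S}(\mathcal{F})$ is nonempty. To do this we define $Q$ on each gap $(V_k,U_{k+1}]$ so that (Q1), (Q2) and \eqref{eq skeleton 2} hold. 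A natural choice is to let $Q$ take small values there, e.g. $Q(i)=0$ or $Q(i)=1$ except at $i=U_{k+1}$. Admissibility (Q2) then has to be verified against the constant blocks $Q(i)=U_k$, using \eqref{eq skeleton 2} to control the lexicographic comparison.

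Next we exploit the dynamics encoded by \eqref{eq skeleton 1}. For $U_k<i\le V_k$ we have $r(i)=S(U_k)$. So between times $S(U_k)$ and $S(V_k)$ the kneading sequence repeats the initial block of length $S(U_k)$, namely $V_k-U_k$ times. By Theorem \ref{thm Hofbauer Keller}, the realizing parameter $a$ therefore has $\map^{S(U_k)}(c)$ extremely close to $c$. During this stretch the critical orbit shadows a quasi-periodic pattern of period $S(U_k)$. Since $V_k\ge k^2 2^{U_k}$ and $S(U_k)\le 2^{U_k}$ by \eqref{eq upper bound S}, the time $S(V_k)$ is overwhelmingly spent in this repetition. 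Hence the critical empirical measure at time $S(V_k)$ nearly equals the one at time $S(U_k)$. The same mechanism, with \eqref{eq frame U}, handles the next scale.

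For (1) we would use the standard criterion, via the Hofbauer tower or Keller's results, that an acip with positive entropy forces the first-return depths $Q(k)$ not to stay at a fixed level for very long. Equivalently, it forces summable tails of the tower levels. The long constant runs $Q=U_k$ of length $\gg 2^{U_k}$ violate this, since the tower mass piles up near the levels corresponding to these near-returns.

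The main obstacle is (2), which requires transferring the statement from the critical orbit to Lebesgue-typical $x$. We would use the Markov extension (Hofbauer tower) and Lebesgue-typical lifts, together with distortion estimates of the Koebe type in Lemma \ref{lemma Koebe}. The first goal is to show that, for most $x$, there is a time $t_n=o(S(V_n))$ at which the orbit of $x$ enters a tiny neighbourhood of $c$ of depth $\ge S(U_n)$. After that time the orbit of $x$ shadows the critical orbit through the whole repetition phase up to $S(V_n)$. The remaining task is a measure estimate for the bad set, where either entry happens too late or shadowing breaks early. We would bound its measure by a quantity summable in $n$, using the growth conditions \eqref{eq frame U} and \eqref{eq frame V}, and then conclude by Borel–Cantelli. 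Shadowing plus uniform continuity of $\psi$ then gives the displayed limit.
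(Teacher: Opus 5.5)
The paper gives no proof of this proposition; it quotes it from Hofbauer--Keller. So your sketch has to stand on its own, and it has a genuine gap exactly where you place the main obstacle. Part (2) is announced rather than proved, and the mechanism you describe would not give it.

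A visit of the orbit of $x$ to a neighbourhood of $c$ of depth about $S(U_n)$ makes it follow the critical orbit for only about one period $S(U_n)$. Shadowing the critical orbit over a window of length comparable to $S(V_n)$ would need a visit of depth comparable to $S(V_n)$, and nothing suggests that $\Leb$-a.e.\ $x$ makes such a visit by time $o(S(V_n))$.

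What actually keeps a typical orbit on the period-$S(U_n)$ pattern is re-injection, which is transparent in the Markov extension. Since $Q(i)=U_n$ for $U_n<i\le V_n$, each cutting level $S(i)$ in this range splits into a part that moves up and a part that falls back to level $S(U_n)+1$. Hence an orbit that has entered the block of levels $(S(U_n),S(V_n)]$ stays there, following the itinerary of $\map(c),\dots,\map^{S(U_n)}(c)$ again and again, until it climbs out at the top.

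A proof of (2) therefore needs quantitative estimates on the lifted Lebesgue measure that your proposal does not contain:
\begin{itemize}
\item[(a)] a.e.\ orbit enters this block at a time that is $o(S(V_n))$, where $S(V_n)=(V_n-U_n+1)S(U_n)$. To get there it must climb through all levels produced by the gap $(V_{n-1},U_n]$, falling back whenever it is on the wrong side of $c$ at a cutting level.
\item[(b)] Climbing out of the block before time $S(V_n)$ is unlikely.
\item[(c)] The levels of the block have length tending to $0$ (absence of homtervals), so following the pattern means metric closeness and uniform continuity of $\psi$ applies.
\end{itemize}
Your sentence about a bad set of measure ``summable in $n$ using \eqref{eq frame U} and \eqref{eq frame V}'' is where all of this work lies, and no mechanism for it is given.

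The remaining parts inherit this gap.
\begin{itemize}
\item For (1), I know of no criterion saying that an acip forbids long constant runs of $Q$. As you state it (``not for very long'') it is not usable. What is available is Keller's theorem that an ergodic acip of positive entropy lifts to the Markov extension. So (1) would follow once you show that, along the times $S(V_n)$, typical lifted orbits spend a proportion of time tending to $1$ at heights above $S(U_n)\to\infty$. That is again the content of (a)--(b).
\item Your choice of frames (the minimal admissible $V_k,U_{k+1}$, or these plus one) presupposes that the bare growth conditions \eqref{eq frame U}, \eqref{eq frame V} already make (a)--(b) work for every $Q\in\mathscr{S}(\mathcal{F})$. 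This has to be proved, and the statement only claims that uncountably many good frames exist. The safer route is to show that every sufficiently large choice of $V_k$ and $U_{k+1}$ works at each step, which also gives uncountability for free.
\item Arbitrary values $0,1$ on the gaps do not give an admissible kneading map. At any $k$ with $Q(k)=1$, condition (Q2) requires $\{Q(k+j)\}_j\succeq\{Q(j)\}_j$. With $Q=0$ on $[1,U_1]$ and $Q(U_1+1)=U_1$, this forbids $U_1$ consecutive zeros followed by a value $<U_1$. This is why the gap values are taken from $\Omega_N$.
\end{itemize}
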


We  endow $\{0,1\}^\mathbb{N}$ with the shift metric.
We denote by $\sigma$ the left shift map on $\{0,1\}^\mathbb{N}$. 
For $N\in\mathbb{N}$, as in \cite{HofbauerKeller}, we define 
\begin{align*}
\Omega_N:=\left\{\omega\in\{0,1\}^{\mathbb{N}}
:\begin{array}{l} 0^N 
\text{ and }01^{2i+1} \text{ for }i\in\mathbb{N}\cup\{0\} \text{ do not occur}\\
\text{as subwords of }\omega
\end{array}
\right\}
\end{align*}
Note that $\Omega_N$ is closed and $\sigma(\Omega_N)=\Omega_N$.
We denote by $M(\Omega_N)$ the space of Borel probability measures on $\Omega_N$ endowed with the weak* topology.
Let $\tilde d$ be a metric on $M(\Omega_N)$ generating the weak* topology.
We denote by $M_\sigma(\Omega_N)$ the set of all $\sigma$-invariant 
Borel probability measures on $\Omega_N$.

We are now in a position to begin the proof of Theorem \ref{thm quadratic map}. 
The construction of the kneading map $Q$ in the following proof is exactly the same as that in the proof of \cite[Proposition 2]{HofbauerKeller}.

\emph{Proof of Theorem \ref{thm quadratic map}.}
Let $N\geq 2$. We fix a frame $\mathcal{F}$ obtained in Proposition \ref{prop Hofbauer Keller} with $U_1=N+1$ (i.e. $\mathcal{F}$ satisfies conditions (1) and (2) of Proposition \ref{prop Hofbauer Keller}). Let $\tilde p_1=111\cdots$ and let $\tilde p_3:=011011011\cdots$. Then we have $\tilde p_1\in \Omega_N$ and $\tilde p_3\in \Omega_N$. Let 
\[
C:=\left\{t\delta_{\tilde p_1}+(1-t)\left(\frac{1}{2}\sum_{i=0}^2\delta_{\sigma^i(\tilde p_3)}\right)
:t\in [0,1]
\right\}.
\]
Notice that $C$ is a closed convex subset of $M_\sigma(\Omega_N)$.
We write 
$
\mathbb{Q}\cap[0,1]=\{q_i:i\in\mathbb{N}\}
$
, where $q_1=1$.
Let 
\[
C':=\left\{\mu_i:=q_i\delta_{\tilde p_1}+\left(1-q_i\right)\left(\frac{1}{2}\sum_{i=0}^2\delta_{\sigma^i(\tilde p_3)}\right)
:i\in \mathbb{N}
\right\}.
\]
Then $C'$ is a dense subset of $C$. By \cite[p.324]{HofbauerKeller}, $\Omega_N$ has the specification property (see, for example, \cite[Definition 21.1]{DenkerGrillendergerSigmund} for the definition of the specification property). Thus, by \cite[Corollary 21.15]{DenkerGrillendergerSigmund}, for each $i\in\mathbb{N}$, $\mu_i$ has a generic point $\omega(i)\in \Omega_N$ satisfying $\omega_1(i)=0$, that is, for any continuous function $\psi$ on $\Omega_N$ we have $\lim_{n\to\infty}\frac{1}{n}\sum_{j=0}^{n-1}\psi(\sigma^j(\omega(i)))=\int \psi d\mu_i$. 
Since $\mu_1=q_1 \delta_{\tilde p_1}=\delta_{\tilde p_1}$, we can choose 
\begin{align}\label{eq def omega 1}
\omega(1)=011111\cdots.    
\end{align}
Hence, for each $i\in \mathbb{N}$ there exists $l(i)\in\mathbb{N}$ such that for all $l\geq l(i)$ we have 
\[
\tilde d\left(\mu_i,\frac{1}{l}\sum_{j=1}^l\delta_{\sigma^j(\omega(i))}\right)<\frac{1}{i}.
\]
We set $\tilde N:=l(1)+1$ and $n_k=1$ for all $1\leq k\leq \tilde N$. 
For all $m\in\mathbb{N}$ and  $\max_{1\leq j\leq m}\{l(j)\}+\sum_{j=1}^mj+1\leq k
\leq 
\max_{1\leq j\leq m+1}\{l(j)\}+\sum_{j=1}^{m+1}j$ we set 
\[
n_k=\min\left\{k-\left(\max_{1\leq j\leq m}\{l(j)\}+\sum_{j=1}^mj\right)
,m
\right\}.
\]
Then for each $i\in\mathbb{N}$ there exists an infinite set $A\subset \mathbb{N}$ such that for all $k\in A$ we have $n_k=i$. Moreover, for all $k\geq \tilde N+1$ we have $l(n_k)\leq \max_{1\leq j\leq k}\{l(j)\}<k$ and thus,
\begin{align*}
    S(V_k)\geq k^2\geq k l(n_k) \text{ and }
    S(U_{k+1})-S(V_k+k)\geq (k-1)2^{V_k+k}\geq l(n_k)
\end{align*}
by \eqref{eq frame U}, \eqref{eq frame V} and \eqref{eq upper bound S}. 
Recall that for all $i\in\mathbb{N}$ we have $\omega(i)\in \Omega_N$ and $\omega_1(i)=0$. 
For $k\in \mathbb{N}$ we can write $\omega(n_k)=v_1(n_k)v_2(n_k)\cdots$, where $v_j(n_k)=0$ or $v_j(n_k)=11$ for all $j\in \mathbb{N}$. 
As in the proof of \cite[Proposition 2]{HofbauerKeller}, we define the map $Q:\mathbb{N}\rightarrow \mathbb{N}\cup\{0\}$ by 
\begin{itemize}
    \item[(D1)]$Q(j):=0 \text{ for all }j=1,\cdots,U_1$.
    \item[(D2)]$Q(U_k+j):=U_k \text{ for all }k\geq1,\ j=1,\cdots,V_k-U_k.$
    \item[(D3)]$Q(V_k+j):=U_{k-j} \text{ for all } k\geq1,\ j=1,\cdots, k-1$.
    \item[(D4)] $Q(V_k+k):=1 \text{ for all }k\geq1$.
    \item[(D5)]
    $\text{ For all } k\geq1 \text{ and } j=1,\cdots, U_{k+1}-V_k-k$
    \begin{align}\nonumber
     &Q(V_k+k+j):=\left\{
 \begin{array}{ll}\nonumber
   0   & \text{if}\ v_j(n_k)=0 \\
   1   & \text{if} \ v_j(n_k)=11
 \end{array}\nonumber
 \right..
   \nonumber
 \end{align}
\end{itemize}
By the definition of $Q$, one can show that   
$     Q\in \mathscr{S}(\mathcal{F})$
and
\begin{align}\label{eq main kneading}
    &e_{S(V_k+k)+j}=\omega_j(n_k)
    \text{ for all }k\in\mathbb{N} \text{ and } 
    1\leq j\leq S(U_{k+1})-S(V_k+k)
\end{align}
(see the proof of \cite[Proposition 2]{HofbauerKeller}).
Here, $\kneading=e_1e_2\cdots$ denotes the $Q$-sequence of the kneading map $Q$ and $\omega(n_k)=\omega_1(n_k)\omega_2(n_k)\cdots$.

Let $a$ be the parameter obtained in Theorem \ref{thm Hofbauer Keller}. Then the kneading sequence of the quadratic map $\map$ is the $Q$-sequence $\kneading$ of the kneading map $Q$. 
Since if $\map$ has a periodic attractor associated with a periodic point $p$ then the $\omega$-limit set $\omega(c)$ of the critical point $c$ of $\map$ is the orbit $O(p)$ of $p$ (see \cite[Theorem 2.7]{Singer} or \cite[Chapter II, Theorem 6.1]{de2012one}). In this case, the kneading sequence $\kneading$ is eventually periodic. However, by \eqref{eq main kneading}, $\kneading$ is not eventually periodic. Hence, $\map$ has no periodic attractor. Combining this with \cite[Theorem 2.6]{Gukenheimer}, we conclude that there is no non-trivial interval $J\subset [0,1]$ such that $\map^n|_J$ is a homeomorphism for all $n\in\mathbb{N}$. Hence, the following coding map is a homeomorphism: We define the coding map 
$\pi:
[0,1]\setminus\bigcup_{n=0}^\infty \map^{-n}(c)
\rightarrow 
\Sigma$ by 
\begin{align*}
\pi_{k}(x)=0 \text{ if }\map^{k-1}(x)\in[0,c) \text{ and }
\pi_{k}(x)=1 \text{ if }\map^{k-1}(x)\in(c,1],
\end{align*}
where $\pi(x):=\pi_1(x)\pi_2(x)\cdots$ and $\Sigma:=\pi([0,1]\setminus\bigcup_{n=0}^\infty \map^{-n}(c))\subset \{0,1\}^\mathbb{N}$. 
Note that we have  $\kneading=\pi(\map(c))$ and $\pi\circ \map=\sigma\circ \pi$.
Let $p_1$ be the fixed point of $\map$ in $(c,1]$. Since $e_1=1$ and $e_2=0$, there exists the periodic point $p_3$ such that $\map^3(p_3)=p_3$ and $\pi(p_3)=\tilde p_3$. Let $L_a$ be the set of weak accumulation points of the sequence 
\[
\left\{\frac{1}{S(U_k)}\sum_{k=0}^{S(U_k)-1}\delta_{\map^k(\map(c))}\right\}_{k\in\mathbb{N}}.
\]
By \cite[Proposition 2]{HofbauerKeller}, we have 
\[
\overline{\omega}_a(\delta_{\map(c)})=L_a=
\left\{t\delta_{ p_1}+(1-t)\left(\frac{1}{2}\sum_{i=0}^2\delta_{\map^i(p_3)}\right)
:t\in [0,1]
\right\}.
\]
Moreover, by using this equality and Proposition \ref{prop Hofbauer Keller}, Hofbauer and Keller \cite[Theorem 5]{HofbauerKeller} showed that for $\Leb$-almost every $x\in [0,1]$ we have 
\[
\overline{\omega}_a(\delta_x)=\overline{\omega}_a(\Leb)=\overline{\omega}_a(\delta_{\map(c)})=\left\{t\delta_{ p_1}+(1-t)\left(\frac{1}{2}\sum_{i=0}^2\delta_{\map^i(p_3)}\right)
:t\in [0,1]
\right\}.
\]
This implies (1) and (2) of Theorem \ref{thm quadratic map}.

Let $\{k_j\}_{j\in\mathbb{N}}\subset \mathbb{N}$ be a strictly increasing sequence such that $n_{k_j}=1$ for all $j\in\mathbb{N}$.
Note that since $\map$ has no periodic attractor, we have $\log |\map'(p_1)|>0$. 
We will show that 
\begin{align}\label{eq upper lyapunov critical}
    \lim_{j\to\infty}\frac{1}{S(U_{k_j+1})-1}
    \log \left|\left(\map^{S(U_{k_j+1})-1}\right)'(\map(c))\right|
    =\log |\map'(p_1)|>0.
\end{align}
Let $\epsilon>0$ and let $0<\rho<1$. Since the function $\tau\in \Sigma\mapsto \log |\map'\circ\pi^{-1}(\tau)|\in\mathbb{R}$ is continuous at $\pi(p_1)=\tilde p_1$, there exists $N_1\geq 3$ such that for all $\tau\in \Sigma$ with $\tau_i=1$ for all $1\leq i\leq N_1$ we have 
\begin{align}\label{eq continuous}
    \left|\log |\map'\circ\pi^{-1}(\tau)|-\log |\map'\circ\pi^{-1}(\tilde p_1)|\right|<\epsilon.
\end{align}
We fix $N_2\in\mathbb{N}$ such that for all $j\geq N_2$ we have $S(U_{k_j+1})\geq N_1+S(V_{k_j}+k_j)+2$. 
By the chain rule, for all $j\geq N_2$ and $x\in[0,1]\setminus \bigcup_{n=1}^\infty \map^{-n}(c)$ we have 
\begin{align}\label{eq decomposition lyapunov 1}
     &\log \left|\left(\map^{S(U_{k_j+1})-1}\right)'(\map(x))\right|
     =
     \log \left|\left(\map^{S(V_{k_j}+k_j)+1}\right)'(\map(x))\right|
     \\&+
     \sum_{k=S(V_{k_j}+k_j)+2}^{S(U_{k_j+1})-(N_1-1)}
     \log |\map'(\map^{k}(x))|
     +
     \sum_{k=S(U_{k_j+1})-(N_1-2)}^{S(U_{k_j+1})-2}
     \log |\map'(\map^{k}(x))|\nonumber
\end{align}
Let $K$ be the constant obtained in Lemma \ref{lemma Nowicki and Sands}. We set $\tilde K:=\max\{|\log a|, |\log \rho|\}$ By Lemma \ref{lemma Nowicki and Sands}, we obtain
\begin{align*}
     \log \left|\left(\map^{S(V_{k_j}+k_j)+1}\right)'(\map(c))\right|\leq |
     \log K|+\tilde K(S(V_{k_j}+k_j)+1).
\end{align*}
Moreover, by \eqref{eq upper bound S}, \eqref{eq frame U} and \eqref{eq frame V}, we have 
\begin{align*}
    \limsup_{j\to\infty}\frac{S(V_{k_j}+k_j)+1}{S(U_{k_j+1})-1}
    \leq
    \limsup_{j\to\infty}\frac{2^{V_{k_j}+k_j}+1}{k_j2^{V_{k_j}+k_j}-1}=0.
\end{align*}
By using these estimates, for each $x\in \{c,p_1\}$ we obtain 
\begin{align}\label{eq first term}
    \lim_{j\to \infty}\frac{1}{S(U_{k_j+1})-1}\log \left|\left(\map^{S(V_{k_j}+k_j)+1}\right)'(\map(x))\right|=0.
\end{align}
Recall that $\omega(n_{k_j})=\omega(1)=011111\cdots$ for all $j\in\mathbb{N}$ (see \eqref{eq def omega 1}).
By \eqref{eq main kneading}, for all $S(V_{k_j}+k_j)+2\leq k\leq S(U_{k_j+1})-(N_1-1)$ and $0\leq i\leq N_1-1$ we have $e_{k+i}=1$. Thus, by \eqref{eq continuous}, we obtain
\begin{align}\label{eq second term}
     & \frac{1}{S(U_{k_j+1})-1} \left|  \sum_{k=S(V_{k_j}+k_j)+2}^{S(U_{k_j+1})-(N_1-1)}\left(
     \log |\map'(\map^{k}(c))|-    
     \log |\map'(\map^{k}(p_1))|\right)
     \right|\leq
     \\&   \frac{1}{S(U_{k_j+1})-1} \sum_{k=S(V_{k_j}+k_j)+2}^{S(U_{k_j+1})-(N_1-1)}
     \left|\log |\map'\circ\pi^{-1}(\sigma^{k-1}(\kneading))|- 
     \log |\map'\circ\pi^{-1}(\tilde p_1)|
     \right|<\epsilon\nonumber.
\end{align}
By \eqref{eq continuous}, for all $S(U_{k_j+1})-(N_1-2)\leq k\leq S(U_{k_j+1})-2$ we have $e_k=e_{k+1}=e_{k+2}=1$. Since $e_1=1$ and $e_2=0$ and $\pi^{-1}$ is continuous, there exists $\delta>0$ such that for all $j\in\mathbb{N}$ and $S(U_{k_j+1})-(N_1-2)\leq k\leq S(U_{k_j+1})-2$ we have $|\map^k(c)-c|>\delta$ and thus, 
\[
C:=\sup_{j\in\mathbb{N}}\max\left\{\left|\log |\map'(\map^k(c))|\right|:S(U_{k_j+1})-(N_1-2)\leq k\leq S(U_{k_j+1})-2\right\}<\infty.
\]
This implies that for $x\in \{c,p_1\}$ we obtain
\begin{align*}
\lim_{n\to\infty}    \frac{1}{S(U_{k_j+1})-1}     \sum_{k=S(U_{k_j+1})-(N_1-2)}^{S(U_{k_j+1})-2}
     \log |\map'(\map^{k}(x))|=0.
\end{align*}
    Combining this with \eqref{eq decomposition lyapunov 1}, \eqref{eq first term} and \eqref{eq second term}, we obtain \eqref{eq upper lyapunov critical}.

Note that since $[0,1]$ is compact, for each continuous function $\phi$ on $[0,1]$ we have $\sup_{x\in [0,1]}\{|\phi(x)|\}<\infty$. By a similar argument in the proof of \eqref{eq upper lyapunov critical}, we obtain
\begin{align}\label{eq delta limit}
\lim_{j\to\infty}
\frac{1}{S(U_{k_j+1})-1}\sum_{m=1}^{S(U_{k_j+1})-1}\delta_{\map^m(c)}=\delta_{p_1}.
\end{align}
It is known that if we have $\llya(f_{\tilde a}(c))>0$ for some parameter $\tilde a\in [0,4]$ then $f_{\tilde a}$ has an absolutely continuous $T$-invariant probability measure with a positive entropy (see, \cite{ColletEckmann} and \cite{Nowicki1985} or \cite[Chapter V-4]{de2012one}). Hence, by (1) of Proposition \ref{prop Hofbauer Keller} and Lemma \ref{lemma Nowicki and Sands}, we obtain $\llya(\map(c))=0$. Combining this with \eqref{eq upper lyapunov critical} and \eqref{eq delta limit}, we obtain (3) of Theorem \ref{thm quadratic map}.
\qed

\subsection*{Acknowledgments}
The author would like to thank Yushi Nakano for introducing this subject to the author and for numerous valuable discussions, and Johannes Jaerisch for numerous valuable discussions. The author would also like to thank Hiroki Takahasi for informing us of Keller's paper \cite{Keller1990}.
The author was supported by the JSPS KAKENHI 25KJ1382.

\bibliographystyle{abbrv}
\bibliography{reference}

\begin{thebibliography}{10}

\bibitem{BruinRiveraShenStrien}
H.~Bruin, J.~Rivera-Letelier, W.~Shen, and S.~van Strien.
\newblock Large derivatives, backward contraction and invariant densities for
  interval maps.
\newblock {\em Invent. Math.}, 172(3):509--533, 2008.

\bibitem{ColletEckmann}
P.~Collet and J.-P. Eckmann.
\newblock Positive {L}iapunov exponents and absolute continuity for maps of the
  interval.
\newblock {\em Ergodic Theory Dynam. Systems}, 3(1):13--46, 1983.

\bibitem{FariaGuarino}
E.~de~Faria and P.~Guarino.
\newblock Real bounds and {L}yapunov exponents.
\newblock {\em Discrete Contin. Dyn. Syst.}, 36(4):1957--1982, 2016.

\bibitem{de2012one}
W.~de~Melo and S.~van Strien.
\newblock {\em One-dimensional dynamics}, volume~25 of {\em Ergebnisse der
  Mathematik und ihrer Grenzgebiete (3) [Results in Mathematics and Related
  Areas (3)]}.
\newblock Springer-Verlag, Berlin, 1993.

\bibitem{DenkerGrillendergerSigmund}
M.~Denker, C.~Grillenberger, and K.~Sigmund.
\newblock {\em Ergodic theory on compact spaces}, volume Vol. 527 of {\em
  Lecture Notes in Mathematics}.
\newblock Springer-Verlag, Berlin-New York, 1976.

\bibitem{Gukenheimer}
J.~Guckenheimer.
\newblock Sensitive dependence to initial conditions for one-dimensional maps.
\newblock {\em Comm. Math. Phys.}, 70(2):133--160, 1979.

\bibitem{HofbauerKeller}
F.~Hofbauer and G.~Keller.
\newblock Quadratic maps without asymptotic measure.
\newblock {\em Comm. Math. Phys.}, 127(2):319--337, 1990.

\bibitem{Kellerliting}
G.~Keller.
\newblock Lifting measures to {M}arkov extensions.
\newblock {\em Monatsh. Math.}, 108(2-3):183--200, 1989.

\bibitem{Keller1990}
G.~Keller.
\newblock Exponents, attractors and {H}opf decompositions for interval maps.
\newblock {\em Ergodic Theory Dynam. Systems}, 10(4):717--744, 1990.

\bibitem{KirikiLiNakanosoma}
S.~Kiriki, X.~Li, Y.~Nakano, and T.~Soma.
\newblock Abundance of observable {L}yapunov irregular sets.
\newblock {\em Comm. Math. Phys.}, 391(3):1241--1269, 2022.

\bibitem{kiriki2026non}
S.~Kiriki, X.~Li, Y.~Nakano, and T.~Soma.
\newblock Non-existence of lyapunov exponents in the newhouse domain.
\newblock {\em arXiv preprint arXiv:2604.10913}, 2026.

\bibitem{Lyapunovholomorphic2016}
G.~Levin, F.~Przytycki, and W.~Shen.
\newblock The {L}yapunov exponent of holomorphic maps.
\newblock {\em Invent. Math.}, 205(2):363--382, 2016.

\bibitem{YushiSomaYamamoto}
Y.~Nakano, T.~Soma, and K.~Yamamoto.
\newblock Observable {L}yapunov irregular sets for planar piecewise expanding
  maps.
\newblock {\em Discrete Contin. Dyn. Syst.}, 43(7):2737--2755, 2023.

\bibitem{Nowicki1985}
T.~Nowicki.
\newblock Symmetric {$S$}-unimodal mappings and positive {L}iapunov exponents.
\newblock {\em Ergodic Theory Dynam. Systems}, 5(4):611--616, 1985.

\bibitem{NowickiSands1998}
T.~Nowicki and D.~Sands.
\newblock Non-uniform hyperbolicity and universal bounds for {$S$}-unimodal
  maps.
\newblock {\em Invent. Math.}, 132(3):633--680, 1998.

\bibitem{OttYorke}
W.~Ott and J.~A. Yorke.
\newblock When {L}yapunov exponents fail to exist.
\newblock {\em Phys. Rev. E (3)}, 78(5):056203, 6, 2008.

\bibitem{characteristic1993}
F.~Przytycki.
\newblock Lyapunov characteristic exponents are nonnegative.
\newblock {\em Proc. Amer. Math. Soc.}, 119(1):309--317, 1993.

\bibitem{ColletEckmannholomorphic1998}
F.~Przytycki.
\newblock Iterations of holomorphic {C}ollet-{E}ckmann maps: conformal and
  invariant measures. {A}ppendix: on non-renormalizable quadratic polynomials.
\newblock {\em Trans. Amer. Math. Soc.}, 350(2):717--742, 1998.

\bibitem{Rivera-Letelier}
J.~Rivera-Letelier.
\newblock Asymptotic expansion of smooth interval maps.
\newblock {\em Ast\'erisque}, (416):33--63, 2020.
\newblock Some aspects of the theory of dynamical systems: a tribute to
  Jean-Christophe Yoccoz. Vol. II.

\bibitem{Singer}
D.~Singer.
\newblock Stable orbits and bifurcation of maps of the interval.
\newblock {\em SIAM J. Appl. Math.}, 35(2):260--267, 1978.

\end{thebibliography}
 \nocite{*}

\end{document}